\DeclareSymbolFont{calletters}{OMS}{cmsy}{m}{n}
\DeclareSymbolFontAlphabet{\mathcal}{calletters}
\def\be{\begin{eqnarray}}
\def\ee{\end{eqnarray}}
\def\b*{\begin{eqnarray*}}
\def\e*{\end{eqnarray*}}
\newtheorem{Theorem}{Theorem}[section]
\newtheorem{Proposition}[Theorem]{Proposition}
\newtheorem{Assumption}[Theorem]{Assumption}
\newtheorem{Lemma}[Theorem]{Lemma}
\newtheorem{Remark}[Theorem]{Remark}
\newtheorem{Example}[Theorem]{Example}
\makeatletter \@addtoreset{equation}{section}
\newcommand{\bea}{\begin{eqnarray}}
\newcommand{\bes}{\begin{subequations}}
\newcommand{\ees}{\end{subequations}}
\newcommand{\bgt}{\begin{gather}}
\newcommand{\egt}{\begin{gather}}
\newcommand{\eea}{\end{eqnarray}}
\newcommand{\beaa}{\begin{eqnarray*}}
\newcommand{\eeaa}{\end{eqnarray*}}
\def \E{\mathbb{E}}
\def \F{\mathbb{F}}
\def \P{\mathbb{P}}
\def \R{\mathbb{R}}
\def\Lc{{\cal L}}
\def\Pc{{\cal P}}
\def\Cb{\overline{C}}
\def\Eb{E}
\def \eps{\varepsilon}
\def \0{\mathbf{0}}
\def \Xt{\widetilde{X}}
\def \x{\times}
\def\mub{\bar{\mu}}
\def\sigmab{\bar{\sigma}}
\def\mut{\tilde{\mu}}
\def\sigmat{\tilde{\sigma}}
\def\Lct{\widetilde{\Lc}}
\def\ft{\tilde{f}}
\def\Phit{\widetilde{\Phi}}
\def\Deltat{\widetilde{\Delta}}
\def\xt{\tilde x}
\def\yt{\tilde y}
\def\1{\mathbf{1}}
\def\xr{{\rm x}}
\def\vr{{\rm v}}
 \def\Cb{{\mathbb C}}
  \def\wr{{\rm w}}
  \def\vs#1{\vspace{2mm}}
 \def\Tr{{\rm Tr}}
 \def\det#1{{\rm det}\left(#1\right)}
 \def\ar{\mathfrak{a}}
 \def\br{\mathfrak{b}} 
  \def\wr{{\rm w}}
 \def\AMp{{\rm \bf A}}
 \def\Ap{\overrightarrow{\rm A}}
\def\Sigmab{\Sigma}
\def\vr{{\rm v}}
 \def\fr{{\rm f}}
\title{On the regularity of solutions of some linear parabolic path-dependent PDEs}
\author{
Bruno Bouchard
\footnote{CEREMADE, Universit\'e Paris-Dauphine, PSL, CNRS.  bouchard@ceremade.dauphine.fr. } 
\and 
Xiaolu Tan
\footnote{Department of Mathematics, The Chinese University of Hong Kong. xiaolu.tan@cuhk.edu.hk. }
}
\begin{document}
\maketitle

\begin{abstract} 
	We study a class of linear parabolic path-dependent PDEs (PPDEs) defined on the space of c\`adl\`ag paths $\xr \in D([0,T])$, 
	in which the coefficient functions at time $t$ depend on $\xr(t)$ and $\int_{0}^{t}\xr(s)dA_{s}$, for some (deterministic) continuous function $A$ with bounded variations.
	Under uniform ellipticity and  H\"older regularity conditions on the coefficients, together with some technical conditions on $A$,
	we obtain the existence of a smooth solution to the PPDE by appealing to the notion of Dupire's derivatives. 
	It provides a generalization to the existing literature studying the case where $A_t = t$,
	and complements {our recent work in} \cite{bouchard2021approximate} on the regularity of approximate viscosity solutions for parabolic PPDEs.
	As a by-product, we also obtain   existence and uniqueness of weak solutions for a class of path-dependent SDEs.
\end{abstract}

{\bf Keywords:} Path-dependent PDE, degenerate parabolic PDE, Dupire’s functional calculus. 
\\
{\bf MSC2020 subject classifications:}  35K65,60H10.

\section{Introduction} 
 
	We consider   linear parabolic path-dependent PDEs (PPDEs) of the form 
	\begin{align}\label{eq:ppde_intro} 
		\partial_{t}\vr ~+~ \mub \partial_{\xr} \vr ~+~\frac12 \sigmab^{2} \partial^{2}_{\xr} \vr ~+~ \bar \ell
		&=
		0,
		~\mbox{on}~
		[0,T)\x D([0,T])\\
		\vr(T,\cdot)&=\bar g~\mbox{on}~
		  D([0,T]).\nonumber
	\end{align}
	In the above, $D([0,T])$ denotes the space of all  real-valued c\`adl\`ag path $\xr = (\xr(t))_{t \in [0,T]}$ on $[0,T]$, 
	the derivatives are taken in the sense of Dupire \cite{dupireito,cont2013functional} (see Section \ref{sec:conditions} below),
	and the coefficient functions $(\mub, \sigmab, \bar \ell,\bar g): [0,T] \x D([0,T]) \longrightarrow \R \x \R \x \R\x \R$ are of the form 
	$$
		\big( \mub_{t}, \sigmab_{t}, \bar \ell_t,\bar g \big) ( \xr)
		~=~
		\big( \mu_{t}, \sigma_{t}, \ell_{t}\big) \big( \xr(t), I_t(\xr) \big),\;\bar g(\xr)=g(\xr_{T},I_{T}(\xr)),
		~~\mbox{with}~
		I_t(\xr) := \int_{0}^{t} \xr(s)dA_{s},
	$$
	for some functions $(\mu, \sigma, \ell,g): [0,T] \x \R^2 \longrightarrow \R \x \R \x \R \x \R$, and a continuous process $A$ with bounded variations. 
	When $A$ is absolutely continuous, say simply $A_t = t$, the above can be written as a degenerate parabolic PDE 
	\begin{equation} \label{eq:pde_intro}
		\partial_{t} v +  \mu \partial_{ x_{1}} v+  x_{1} \partial_{x_{2}} v +\frac12  \sigma^{2} \partial^{2}_{x_{1}x_{1}} v + \ell = 0
		~\mbox{on}~
		[0,T)\x \R^{2},\;v(T,\cdot)=g ~\mbox{on}~
		  \R^{2},
	\end{equation}
	in which the derivatives are now taken in the usual sense and 
	$$
		\vr(t,\xr)
		~=~
		v \big( t,\xr_{t}, I_t(\xr) \big).
	$$
	Indeed, the Dupire's  horizontal derivative $\partial_{t}\vr$ and vertical derivatives $(\partial_{\xr} \vr,\partial^{2}_{\xr} \vr)$ are related to the partial derivatives of $v$ through
	$$
		(\partial_{t},\partial_{\xr} ,\partial^{2}_{\xr} ) \vr(t,\xr)
		=
		\left(\partial_{t}+\xr(t) \partial_{x_{2}},\partial_{x_{1}},\partial^{2}_{x_{1}x_{1}}\right)
		v \big(t,\xr(t), I_t(\xr) \big).
	$$
	{Various} works are devoted to such equations, going back to \cite{kolmogorov1934theorie}, in more complex multivariate frameworks, see e.g.~\cite{delarue2010density,francesco2005class,lanconelli2002linear,sonin1967class,weber1951fundamental} and the references therein. The latter PDE may not admit a $C^{1,2}$-solution, in the traditional sense, even when $\sigmab$ is uniformly elliptic: $\partial_{t}v$ and $ \partial_{x_{2}} v$ are in general not well-defined and one needs to define $\partial_{t}v + x_{1} \partial_{x_{2}} v$ jointly, appealing to the notion of Lie derivative, which amounts to considering Dupire's horizontal derivative when the PDE is seen as a  PPDE. 

	\vspace{0.5em}

	The main novelty of this paper is that we do not assume anymore that  $(A_t)_{t\in [0,T]}$ is absolutely continuous in $t$. 
	In this case, the PDE formulation \eqref{eq:pde_intro} is not valide anymore, but the PPDE formulation \eqref{eq:ppde_intro} is still adequate. 
	We provide conditions under which \eqref{eq:ppde_intro} admits a solution  that is smooth in the sense of Dupire's deviratives. It complements \cite{bouchard2021approximate} in which coefficients are assumed to be $C^{1+\alpha}$, which allows one to construct the so-called approximate viscosity solutions of non-linear path-dependent PDEs   with first order Dupire's vertical derivative enjoying some H\"older-type regularity 
	(see  \cite{bouchard2021approximate} for details). 
	As shown in e.g.~\cite{bouchard2021c}, in many situations, this is already sufficient to derive a Feynman-Kac's representation of the solution by appealing to a version of It\^{o}-Dupire's stochastic calculus for path-dependent functionals that are only vertically differentiable up to the first order.
	In contrast to \cite{bouchard2021approximate}, we only assume here that the coefficients are H\"older continuous, but require  $\bar \sigma$ to be non-degenerate, so as to expect the classical regularization effect to operate. 

	\vspace{0.5em}

	We rely on the parametrix approach, see e.g.~\cite[Chapter 1]{friedman2008partial}. For this, we perform a change of variables which allows us to reduce to a PDE of the form 
	$$
		\partial_{t} u +   \mu  \langle (1,A), D u \rangle + \frac12   \sigma^{2}  (1,A) D^{2} u (1,A)^{\top} = 0,
	$$
	which can be written even if $A$  is not absolutely continuous. 
	The above is again degenerate and $(D u,D^{2} u)$ may not be well-defined. However, the parametrix approach allows one to show that $ \langle (1,A), D u \rangle$ and $(1,A) D^{2}   u (1,A)^{\top}$ are, 
	which in turn implies that the vertical derivatives $(\partial_{\xr} \vr,\partial^{2}_{\xr} \vr)$ of the path-dependent functional $\vr$ are.

	\vspace{0.5em}

	As a by-product, we establish  the existence and uniqueness of a weak solution to the path-dependent stochastic differential equation (SDE)
 	$$
		X_t 
		=
		X_0
		+
		\int_0^t \mu_{s} ( X_s , I_s ) d s 
		+ 
		\int_0^t \sigma_{t} ( X_s , I_s ) dW_t,
		~~~
		I_t = \int_{0}^{t} X_s d A_s,
		~~
		t \ge 0,
	$$
	and provide 	some first properties of the transition density of the Markov process $( X, I)$,
	as well as the corresponding Feynman-Kac's formula.

	\vspace{0.5em}

{These results require structural conditions relating the H\"older regularity of the coefficient $(\mu,\sigma)$ and the path behavior of $A$. If one knows a priori that the above SDE admits a unique weak solution, then one can prove under weaker conditions  that the candidate solution to  \eqref{eq:ppde_intro}, deduced from a formal application of the Feynman-Kac's formula\footnote{Or more rigorously its viscosity solution in the sense of \cite{cosso2021path,zhou2020viscosity}, see also e.g.~\cite{bouchard2021approximate,ekren2014viscosity,ren2014comparison} and the references therein for an alternative definition.}, is already $C^{1}$ in space, in the sense of Dupire. As mentioned above, this turns out to be enough to deduce its It\^{o}-Dupire's semimartingale decomposition.}

	\vspace{0.5em}

	 All over this paper, we stick to a one-dimensional setting for ease of notations. Extensions to multivariate frameworks can be provided by using similar techniques.

	\vspace{0.5em}

	The rest of this paper is organized as follows. 
	Section \ref{sec: main results} states our main results. 
	Proofs are collected in Section \ref{sec: proofs}.

	\vspace{0.5em}
	
	In the following, the $i$-th component of a vector $x$ is denoted by $x_{i}$, the $(i,j)$-component of a matrix $M$ is denoted by $M_{ij}$. 
	Given $\phi: (t,x)\in [0,T] \x \R^2 \longrightarrow \phi(t,x)\in \R$, we let $D\phi$ and $D^{2}\phi$ (or $D_{x}\phi$ and $D^{2}_{xx}\phi$) be the gradient and the Hessian matrix with respect to $x$. 
	The space partial derivatives are denoted by $\partial_{x_{i}}\phi$,  $\partial^{2}_{x_{i}x_{j}}\phi$, and so on if we have to consider higher orders.

\section{Dupire's regularity for linear PPDEs depending on the average of the path} \label{sec: main results}

\subsection{Notations and assumptions}
\label{sec:conditions}

	Given $T> 0$, let $D([0,T])$ denote the Skorokhod space of all $\R$--valued c\`adl\`ag paths $\xr=(\xr(t))_{t \in [0,T]}$ on $[0,T]$,
	and let $C([0,T])$ denote  the subspace of continuous paths.
	Let us equipped $D([0,T])$ with the Skorokhod topology, and $C([0,T])$ with the uniform convergence topology.
	Let $A = (A_t)_{t \ge 0}$ be a deterministic continuous process with finite variation, 
	and $(\mu, \sigma, \ell): [0,T] \x \R^2 \longrightarrow \R \x \R \x \R$ be   coefficient functions,
	from which we define path-dependent functionals $(\mub, \sigmab, \bar \ell): [0,T] \x D([0,T]) \longrightarrow \R \x \R \x \R$ by
	$$
		\big( \mub_{t}, \sigmab_{t}, \bar \ell_t \big) ( \xr)
		~:=~
		\big( \mu_{t}, \sigma_{t}, \ell_{t} \big) \big( \xr(t), I_t(\xr) \big),
		~~\mbox{with}~
		I_t(\xr) := \int_{0}^{t} \xr(s)dA_{s}.
	$$
	We study the following linear parabolic path-dependent PDE (PPDE): 
	\begin{align}\label{eq:ppde} 
		\partial_{t}\vr ~+~ \mub \partial_{\xr} \vr ~+~\frac12 \sigmab^{2} \partial^{2}_{\xr} \vr ~+~ \bar \ell
		~=~
		0,
		~\mbox{on}~
		[0,T)\x D([0,T]),
	\end{align}
	with terminal condition $\vr(T, \xr) = \bar g(\xr):= g \big(\xr(T), I_T(\xr) \big)$ for some function $g: \R \x \R \longrightarrow \R$. In the above, the derivatives are taken in the sense of Dupire.

\paragraph{Dupire's derivatives for path-dependent functionals}

To give a precise definition to the PPDE \eqref{eq:ppde}, let us recall Dupire's \cite{dupireito,cont2013functional} notion of  horizontal derivative $\partial_t$ and  vertical derivatives $\partial_{\xr}$ and $\partial^2_{\xr}$ for path-dependent functionals.

	\vspace{0.5em}

	Let $F: [0,T] \x D([0,T]) \longrightarrow \R$ be a path-dependent functional,
	it is said to be non-anticipative if $F(s,\xr)=F(s,\xr(s\wedge \cdot))$ for all $(s,\xr)\in [0,T] \x D([0,T])$.
	For an non-anticipative map $F$, its horizontal derivative $\partial_s F(s, \xr)$  at $(s,\xr) \in [0,T) \x D([0,T])$ is defined as
	$$
		\partial_{s}F(s,\xr) ~:=~ \lim_{h\searrow 0} \frac{F(s+h,\xr(s \wedge\cdot )) - F(s,\xr)}{h},
	$$
	and its vertical derivative $\partial_{\xr} F(s, \xr)$ is defined as
	$$
		\partial_{\xr} F(s, \xr) ~:=~ \lim_{y \to 0} \frac{F(s,\xr+ y\1_{[s,T]}) - F(s, \xr)}{y},
	$$
	whenever the limits exist.
	In the above, $\xr+ y \1_{[s,T]}$ denotes the path taking value $\xr_t + y\1_{[s,T]}(t)$ at time $t \in [0,T]$.
	Similarly, one can define the second order vertical derivative $\partial^2_{\xr} F$ as the vertical derivative of $\partial_{\xr} F$.
	Given $t \in (0,T]$, we denote by $\Cb([0,t))$ the space of all continuous non-anticipative functionals $F: [0,t) \x D([0,T]) \longrightarrow \R$,
	and we set  
	$$
		\Cb^{0,1}([0,t)) 
		:= 
		\big\{ 
			F \in \Cb([0,t)) ~:  \partial_{\xr}F ~\mbox{is well-defined and belongs to}~
			 \Cb([0,t)) 
		\big\},
	$$
	as well as  
	$$
		\Cb^{1,2}([0,t))
		:=
		\big\{
			F \in \Cb^{0,1}([0,t))~:
			\partial_s F~\mbox{and}~\partial^{2}_{\xr}F~\mbox{are well-defined and  belong to}~\Cb([0,t))
		\big\}.
	$$

\paragraph{Assumptions on the process $A$:}

	Recall that $A = (A_t)_{t \in [0,T]}$ is a deterministic process with finite variation.
	For $0 \le s < t \le T$, let us define  $\overline A_{s,t}:=\frac1{t-s} \int_{s}^{t} A_{r}dr$ and
	$$
		m_{s,t}:=\frac1{t-s}\int_{s}^{t} \big(A_{r}-\overline A_{s,t} \big)^{2} dr,
		~~~
		\tilde m_{s,t}:=\frac1{t-s}\int_{s}^{t} (A_{r}-A_{s} )^{2} dr. 
	$$
	The above will play a major role in our analysis, as they will drive the behavior of the parametrix density on small time intervals.

	\begin{Assumption}\label{hyp:A}
		$\mathrm{(i)}$ There exist constants $\beta_0, \beta_1, \beta _2, \beta_3 \ge 0$ and $C_{(\ref{eq : hyp vitesse explo})}, C_{(\ref{eq:order_m})}> 0$ such that,
		for all $0\le s< t \le T$,
		\begin{align}
			\frac{1}{C_{(\ref{eq : hyp vitesse explo})}} (t-s)^{-\beta_1} 
			~\le~
			&\frac{\tilde m_{s,t}}{m_{s,t}} 
			~\le~
			C_{(\ref{eq : hyp vitesse explo})} (t-s)^{-\beta_0}, 
			\label{eq : hyp vitesse explo}\\
			\frac{1}{C_{(\ref{eq:order_m})}} (t-s)^{- \beta_2}
			~\le~ 
			&\frac{1}{m_{s,t}}
			~\le~
			C_{(\ref{eq:order_m})} (t-s)^{- \beta_3}.
			\label{eq:order_m}
		\end{align}
	
		\noindent $\mathrm{(ii)}$ There exist constants $\beta_4 \ge 0$ and
		$C_{\eqref{eq:holder A}}>0$ such that	
		\begin{align}
			|A_t-A_s| ~\le~ C_{\eqref{eq:holder A}} (t-s)^{\beta_4},
			~~\mbox{for all}~ 0 \le s< t \le T.
			\label{eq:holder A}
		\end{align}
	\end{Assumption}

	\begin{Remark}\label{rem : ordre beta} 
		Notice that, $\lim_{t\downarrow s}\tilde m_{s,t}=0$ by continuity of $A$. 
		Without loss of generality, one can therefore assume that 
		$$
			\beta_{1}\le \beta_{0}\le \beta_{3}\mbox{ and } \beta_{1}\le \beta_{2}\le \beta_{3}.
		$$
		Moreover, one can always choose $\beta_{0}=\beta_{4}=0$ since $m_{s,t} \le \tilde m_{s,t}$ by their definitions.
	\end{Remark}

	Let us provide some typical examples. 
	\begin{Example}\label{example : cas de A}
		$\mathrm{(i)}$ Let $A$ be defined by $A_{t}= \int_{0}^{t} \rho(s) ds$, $t\ge 0$, with $\eps\le \rho\le 1/\eps$ a.e.~for some $\eps>0$.	
		Then, it is easy to check that Assumption \ref{hyp:A} holds with
		$$
			\beta_0 = 0,
			~~\beta_1 = 0, 
			~~\beta_2 = 2,
			~~\beta_3 = 2,
			~~\beta_4 = 1.
		$$
		In this setting, our main results are   similar to those in \cite{francesco2005class}, which studied a multivariate version of the  case where $\rho$ is constant.

		\vspace{0.5em}
		
		\noindent $\mathrm{(ii)}$
		Let $A_{t}=t^{\gamma}$ for some $\gamma \in (0,1)$. Then, 
		$$
			m_{s,t} 
			~=~ 
			\frac{t^{2\gamma+1}-s^{2\gamma+1}}{(2\gamma+1)(t-s)}-\frac{|t^{\gamma+1}-s^{\gamma+1}|^{2}}{(\gamma+1)^{2}(t-s)^{2}},
		$$
		and
		$$
			\tilde m_{s,t}
			~=~
			\frac{\frac1{2\gamma+1}(t^{2\gamma+1}-s^{2\gamma+1})-2\frac1{\gamma+1}(t^{\gamma+1}-s^{\gamma+1})s^{\gamma}+(t-s)s^{2\gamma}}{t-s}.
		$$
		In this setting, Assumption \ref{hyp:A} holds true with
		$$
			\beta_0 = 0,
			~~\beta_1 = 0,
			~~\beta_2 = 2 \gamma, 
			~~\beta_3 = 2\gamma,
			~~\beta_4 = \gamma.
		$$
		
		\vspace{0.5em}
		
		\noindent $\mathrm{(iii)}$ Assume that  there exists $1\ge \gamma_{1}\ge \gamma_{2}>0$ and $C_{1},C_{2}>0$  such that 
		$$
			C_{1}|t-s|^{\gamma_{1}}\le A_{t}-A_{s}\le C_{2} |t-s|^{\gamma_{2}},\mbox{ for all } s\le t\le T. 
		$$
		Then,  Assumption \ref{hyp:A} holds with
		$$
			\beta_0 = 2(\gamma_{1}-\gamma_{2}),
			~~\beta_1 = 0,
			~~\beta_2 = 2 \gamma_{2}, 
			~~\beta_3 = 2\gamma_{1},
			~~\beta_4 = \gamma_{2}.
		$$
		Indeed,  let us choose $t_{0} \in [0,T]$ such that $A_{t_{0}}=\overline A_{s,t}$. 
		Assume that $t-t_{0}\ge (t-s)/2$ (otherwise  $t_{0}-s\ge (t-s)/2$ and we can use similar computations), then 
		\begin{align*} 
			{(t-s)m_{s,t}}
			~=~
			\int_{s}^{t}|A_{r}-A_{t_{0}}|^{2}
			&~\ge~
			\int_{t_{0}}^{t}C_{1}^{2}|{r}-{t_{0}}|^{2\gamma_{1}}dr
			~\ge~
			\frac{C_{1}^{2}}{2^{2\gamma_{1}+1}(2\gamma_{1}+1)}|t-s|^{2\gamma_{1}+1}.
		\end{align*}
		On the other hand, $ {(t-s)m_{s,t}\le } \int_{s}^{t}|A_{r}-A_{s}|^{2}\le \frac{C_{2}^{2}}{2\gamma_{2}+1} |t-s|^{2\gamma_{2}+1}$. Thus,  
		$$
			1
			~\le~
			\frac{\tilde m_{s,t}}{m_{s,t}}
			~\le~
			2^{2\gamma_{1}+1}\frac{C_{2}^{2}(2\gamma_{1}+1)}{C_{1}^{2}(2\gamma_{2}+1)}
			|t-s|^{-2(\gamma_{1}-\gamma_{2})}.
		$$
	 \end{Example}

\paragraph{Assumptions on the coefficient functions $\mu$ and $\sigma$:}

	As in e.g.~\cite{francesco2005class}, the following H\"older regularity assumption on the coefficient functions
	$(\mu, \sigma): [0,T] \x \R^2 \longrightarrow \R \x \R$
	is calibrated to match with the explosion rate of the quadratic form entering the parametrix.
	It does not impose smoothness conditions on $\mu$ and $\sigma$ as in e.g.~\cite{sonin1967class}, and facilitate the analysis, see Remark \ref{rem: si holder} below.
	Let us set 
	$$
		\Theta
		~:=~
		\big\{
			(s,x,t,y) \in [0,T]\x \R^{2}\x [0,T]\x \R^{2}~:~s< t 
		\big\},
	$$ 
	and, for $(s,x,t,y) \in \Theta$,  
	\begin{align} 
		\wr_{s,t}(x,y):=x- \Eb_{s,t}(y) \in \R^2,
		~~\mbox{with}~~
		\Eb_{s,t}(y):= \left(\begin{array}{cc}1&0\\ -(A_{t}-A_{s})&1\end{array}\right)y \in \R^2.
	\label{eq:def_w_xy}
	\end{align}

	\begin{Assumption}\label{hyp: stand ass}
		Let $\beta_0, \beta_1, \beta_2 \ge 0$ be the constants in Assumption \ref{hyp:A}.  Then, 		
		
		\noindent $\mathrm{(i)}$ We have 
		$$ 
			\beta'_1 := \beta_1-\beta_0 > -1,
			~~~~
			\beta'_2 := \beta_2 - \beta_0 > -1. 
		$$
Moreover, the coefficients $\mu$ and $\sigma$ are   continuous,
		and there exist constants $(\underline \ar, \bar \ar)\in \R^{2}$, $\br\in \R$, $C_{(\ref{eq : hyp holder coeff bar sigma})} > 0$
		and  $\alpha>0$ such that
		\begin{align} \label{eq : unif elliptic}
			|\mu| \le \br,
			~~
			0<\underline \ar\le \sigma^{2} \le \bar \ar, \;\mbox{ on }  [0,T] \x \R^2,
		\end{align}
		and 
		\begin{align}
			& \big|\sigma_{s}(x) -\sigma_{t}(y) \big|
			~\le~
			C_{(\ref{eq : hyp holder coeff bar sigma})} \Big(
			|t-s|^{\alpha}
			+
			\big| \wr_{s,t}(x,y)\big|^{\frac{2\alpha}{1+\beta'_1}}
			+ 
			\big|\wr_{s,t}(x,y) \big|^{\frac{2 \alpha}{1+\beta'_2}}
		 	\Big),
			 \label{eq : hyp holder coeff bar sigma} 
		\end{align}
 for all $(s,x,t,y) \in \Theta$.
	
		\noindent $\mathrm{(ii)}$ There exists a constant $C_{(\ref{eq : hyp holder coeff bar mu})}>0$ such that
		\begin{equation} \label{eq : hyp holder coeff bar mu} 	
			\big| \mu_{t}( x) - \mu_{t}( y) \big|
			\le
			C_{(\ref{eq : hyp holder coeff bar mu})} \Big(
				\big| x_{1}- y_{1}\big|^{\frac{2\alpha}{1+\beta'_1}}+\big|  x_{2}- y_{2}\big|^{\frac{2\alpha}{1+\beta'_2}}
			\Big),
			~\mbox{for all}~ (t,x,y) \in [0,T] \x \R^{2} \x \R^2.
		\end{equation}

	\end{Assumption}

	\begin{Example} 
		The condition \eqref{eq : hyp holder coeff bar sigma} holds for instance if $\sigma_{s}(x_1, x_2)$ depends only on $(s, x_{1})$ 
		and is H\"older with respect to $(s, x_1)$.
		It would also hold if it is of the form $\sigma_{s}(x)=\tilde \sigma_{s}(x_{1},x_{2}-A_{s}x_{1})$ for some H\"older continuous map $\tilde \sigma$.
		Indeed,  one has
		\begin{align*} 
			\big |x_{2}-A_{s}x_{1}-(y_{2}-A_{t}y_{1}) \big|
			&~=~
			\big| x_{2}-A_{s}x_{1}-(y_{2}-A_{t}y_{1})-A_{s}(y_{1}-x_{1})+A_{s}(y_{1}-x_{1}) \big| \\
			&~\le~
			\big|x_{2} -y_{2}+(A_{t}-A_{s})y_{1}) \big| + |A_{s}| \big|y_{1}-x_{1} \big|
			\\
			&~\le~
			\Big( 1+\max_{[0,T]}|A| \Big)  |\wr_{s,t}(x,y)|. 
		\end{align*}
	\end{Example}

	\begin{Remark}\label{rem: si holder}
		The case where the coefficient $\sigma$ is H\"older in the classical sense, i.e.
		$$ \big|\sigma_{s}(x) -\sigma_{t}(y) \big|
			~\le~
			C \Big(
			|t-s|^{\alpha}
			+
			\big| x_{1}-y_1\big|^{\frac{2\alpha}{1+\beta'_1}}
			+ 
			\big|x_{2}-y_{2} \big|^{\frac{2 \alpha}{1+\beta'_2}}
		 	\Big)
		$$
		can be tackled by combining the arguments below with those of 
		e.g.~\cite{sonin1967class}. 
		This will add additional exponentially growing terms in the estimates on {$\Phit$ in Proposition \ref{prop:existence_continuite_Phi} below}, which can be handled, to the price of adapted restrictions on the coefficients $(\beta_{i})_{0\le i\le 4}$. 
		We chose  the formulation of the conditions in \eqref{eq : hyp holder coeff bar sigma} for sake of simplicity. 
	\end{Remark}

\subsection{Heuristic derivation using a change of variables and the parametrix method}

	Let us consider the path-dependent SDE
	\begin{equation} \label{eq:SDE_XI}
		X_t 
		=
		X_0
		+
		\int_0^t \mu_{s} ( X_s , I_s ) d s 
		+ 
		\int_0^t \sigma_{t} ( X_s , I_s ) dW_s,
		~~~
		I_t = \int_{0}^{t} X_s d A_s,
		~~
		t \in [0,T],
	\end{equation}
	where $W$ is a Brownian motion.
	Assume that the above SDE has a solution $X$ such that $(X, I)$ is Markov.
	Then, to deduce a solution to the PPDE \eqref{eq:ppde}, it suffices to find the transition probability (density) function $f(s,x; t,y)$ of the Markov process $(X, I)$ from $(s,x)$ to $(t,y)$.
	When $t \longmapsto A_t$ is absolutely continuous, it is well-known that $(s,x) \longmapsto f(s,x; t,y)$ solves a Kolmogorov's backward PDE and that $(t,y) \longmapsto f(s,x; t,y)$ solves a Kolmogorov's forward PDE.
	One can then apply the classical parametrix method as in \cite[Section 4]{francesco2005class} to guess the expression of $f(s,x; t, y)$.
	
	\vspace{0.5em}
	
	In our setting where $A$ is not necessarily absolutely continuous, 
	it is no more possible to write the Kolmogorov's PDE for the transition probability (density) function of $(X, I)$.
	We therefore perform a change of variable and set 
	\begin{align}\label{eq: change variable tilde X} 
		\Xt_t := \AMp_t \left(\begin{array}{c} X_t \\ I_t \end{array}\right),
		~\mbox{with}~
		\AMp_t  := \left(\begin{array}{cc} 1 & 0\\ A_t &-1 \end{array}\right),
		~~ t \in [0,T].
	\end{align}
	Notice that $\AMp_t^{-1} = \AMp_t$ and that
	$\Xt$ is a diffusion process with dynamics 
	\begin{align}\label{eq:def_Xt} 
		\Xt_t
		&~=~
		\Xt_0 + \int_{0}^{t} \mut_{s}(\Xt_s) \Ap_s ds +\int_{0}^{t} \sigmat_{s}(\Xt_s) \Ap_s dW_s,
		~~ t \in [0,T],
	\end{align}
	where $\Ap$, $\mut: \R^2 \longrightarrow \R$ and $\sigmat: \R^2 \longrightarrow \R$ are defined by
	\begin{align}\label{eq: def mu sigma A} 
		\Ap_s := \left(\begin{array}{c} 1 \\  A_{s} \end{array}\right),
		~
		\mut_{s}(x)
		:=
		\mu_{s}(\AMp_s x)
		~\mbox{and}~
		\sigmat_{s}(x)
		:=
		\sigma_{s}(\AMp_s x),
		~(s,x)\in [0,T] \x \R^{2}.
	\end{align}
	The generator $\Lct$ of $\Xt$ is given by
	$$
		\Lct \phi(s,x) 
		~:=~
		\mut_{s}(x)~\Ap_s \cdot D\phi(s,x)+\frac12 \sigmat_{s}(x)^2 ~ \Tr\left[ \Ap_s (\Ap_s)^{\top} D^{2}\phi(s,x) \right],
	$$
	for smooth functions $\phi: [0,T] \x \R^2 \longrightarrow \R$.
	
	Assume that the SDE \eqref{eq:def_Xt} has a solution $\Xt$ which is Markovian and has a smooth transition probability density function $\ft(s,x; t,y)$, from $x$ at $s$ to $y$ at $t$, then 
	$(s,x) \longmapsto \ft(s,x; t,y)$ solves the  Kolmogorov backward equation
	\begin{align}\label{eq: pde ft} 
		\big(\partial_{s} + \Lct \big) \ft(s,x;t,y) = 0,
		~~\mbox{for}~ (s,x) \in [0,t)\x \R^{2}.
	\end{align}
	Notice that, in the above, the operator $\Lct$ {acts} on the first two arguments $(s,x)$ of $\tilde f(s,x; t, y)$.

	\vspace{0.5em}
	
	To construct the parametrix, we consider the following process, with volatility frozen at $(r,z) \in [0,T] \x \R^2$,
	$$
		\Xt^{r,z}_t ~:=~  \sigmat_r(z) \int_0^t  \Ap_s dW_s, 
		~~
		t \in [0,T].
	$$
	The corresponding generator $\Lct^{r,z}$ is then given by
	$$
		\Lct^{r,z} \phi(s,x) 
		~:=~
		\frac12 \sigmat_{r}(z)^2 ~ \Tr\left[ \Ap_s (\Ap_s)^{\top} D^{2}\phi(s,x) \right],
		~\mbox{for   smooth functions}~\phi.
	$$
	We further define $\ft_{r,z} (s,x; t, y)$ as the {corresponding} transition probability function from $(s,x)$ to $(t, y)$, for $(s,x,t,y) \in \Theta$.
	Notice that $\ft_{r,z}$ is explicitly given and that 
	$ y \longmapsto \ft_{r,z} (s,x; t,y)$ is the density function of the Gaussian random vector $x + \sigmat_r(z) \int_s^t \Ap_r d W_r$. 
	It satisfies
	\begin{align}\label{eq: pde fycirc} 
		\big (\partial_{s} + \Lct^{r, z} \big) \ft_{r,z}(s,x;t,y) = 0,\; \mbox{ for } (s,x) \in [0,t)\x \R^{2}.
	\end{align}  
 
 	Now, we employ the machinery of the parametrix method (see e.g. \cite[Chapter 1]{friedman2008partial} or \cite{francesco2005class}),
	taking $\ft_{t, y}(s,x; t,y)$ as parametrix, and expressing $\ft(s,x; t,y)$ in the following form:
	\begin{equation} \label{eq: def ft recursif}
		\ft(s,x;t,y)= \ft_{t, y}(s,x;t,y)+ \int_{s}^{t}\int_{\R^{2}} \ft_{r, z}(s,x;r,z) \Phit (r,z,t,y)dzdr,
	\end{equation}
	for some function $\Phit: \Theta \longrightarrow \R$.
	By \eqref{eq: pde ft} and \eqref{eq: pde fycirc}, one must have
	\begin{align*}
		0 
		~=~& 
		\big( \partial_{s} + \Lct \big) \ft (s,x;t,y) \\
		~=~&
		\big( \partial_{s} + \Lct \big) \ft_{t, y} (s,x;t,y)  + \big( \partial_{s} + \Lct \big)  \int_{s}^{t}\int_{\R^{2}} \ft_{r, z}(s,x;r,z)\Phit(r,z,t,y)dzdr \\
		~=~&
		\big(\Lct - \Lct^{t,y} \big) \ft_{t, y}(s,x; t,y) - \Phit(s,x; t, y)\\
		& + \int_s^t \int_{\R^2} \big( \Lct - \Lct^{t,z}\big) \ft_{r, z}(s,x; r,z) \Phit(r,z; t, y) dz dr.
	\end{align*}
	Therefore, $\Phit$ must satisfy
	\begin{equation} \label{eq:Phit_int_eq}
		\Phit(s,x; t, y)
		~=~
		\big(\Lct - \Lct^{t,y} \big) \ft_{t, y}(s,x; t,y)
		+
		\int_s^t \int_{\R^2} \big( \Lct - \Lct^{t,z}\big) \ft_{r, z}(s,x; r,z) \Phit(r,z; t, y) dz dr.
	\end{equation}
	In view of \eqref{eq:Phit_int_eq}, we obtain
	\begin{equation} \label{eq:def_Phit}
		\Phit(s,x; t, y) 
		~:=~
		\sum_{k=0}^{\infty} \Deltat_k(s,x; t,y),
	\end{equation}
	where $\Deltat _0(s,x; t,y) := \big(\Lct - \Lct^{t,y} \big) \ft_{t, y}(s,x; t,y) $,
	and
	\begin{align}\label{eq: def Delta L k} 
		\Deltat_{k+1}  (s,x;t,y) := \int_s^t   \int_{\R^2} \Deltat_0(s,x; r,z) \Deltat_{k}(r,z;t,y) dz dr, 
		~~k\ge 0.
	\end{align}
	
	Notice that $\Lct$, $\Lct^{t,y}$ and $\ft_{t,y}$ have explicit expressions.
	The main strategy of the classical parametrix method consists in checking that $\Phit$ in \eqref{eq:def_Phit} is well-defined and solves the integral equation \eqref{eq:Phit_int_eq}.
	Then, one defines $\ft$ by \eqref{eq: def ft recursif}, and check that it provides a solution to \eqref{eq: pde ft}.
	If $\ft$ is smooth, one can basically deduce that $\ft$ is the transition probability density function of $\Xt$ in \eqref{eq:def_Xt} by using the Feynman-Kac's formula.

	\vspace{0.5em}

	{The main difficulty here lies in the fact that $\ft$ is, in general, not smooth enough. For smoothness properties, we will therefore  turn back to the initial coordinates $(X,I)$ and define the candidate transition probability function $f$ of the process $(X, I)$ in \eqref{eq:SDE_XI}  through \eqref{eq:def_Phi}-\eqref{eq:def_f_transition_proba} below, and work on it directly. }

\subsection{Main results}
\label{sec:main_results}

	Under some conditions on the constants $\alpha$ and $(\beta)_{i = 0, \cdots, 4}$ given in Assumptions \ref{hyp:A} and \ref{hyp: stand ass},
	we will show that $\Phit : \Theta \longrightarrow \R$ is well-defined by \eqref{eq:def_Phit}-\eqref{eq: def Delta L k}.
	For $(r,z) \in [0,T] \x \R^2$, we can  then define $f_{r,z}$ and $\Phi$ by inverting the change of variables in \eqref{eq: change variable tilde X}: 
	\begin{equation} \label{eq:def_Phi}
		f_{r,z} (s,x;t,y)
		:=
		\ft_{r, \AMp_r z} (s,\AMp_{s}x;t,\AMp_{t}y),
		~~
		\Phi (s,x;t,y)
		:=
		\Phit (s,\AMp_{s}x;t,\AMp_{t}y).
	\end{equation}
	The corresponding candidate transition density  $f: \Theta \longrightarrow \R$ for $(X,I)$ is therefore:
	\begin{equation} \label{eq:def_f_transition_proba}
		f (s,x;t,y) ~:=~ f_{t, y}(s,x;t,y)+ \int_{s}^{t}\int_{\R^{2}} f_{r, z}(s,x;r,z) \Phi (r,z; t,y)dzdr,
	\end{equation}
	for all $(s,x,t,y) \in \Theta$.
	
	For any positive constant $a \in \R_+$ and $0 \le s < t \le T$, let us set 
	$$
		\Sigma_{s,t}(a) 
		~:=~
		a \left(   \begin{array}{cc} 
		t- s &  - \int_s^t (A_r - A_s) dr  \\
		- \int_s^t (A_r - A_s) dr & \int_s^t (A_r - A_s)^2 dr
		\end{array} \right).
	$$
	For $(r,z) \in [0,T] \x \R^2$, we write  $\Sigma_{s,t}(r,z) := \Sigma_{s,t}( \sigma^2_r(z) )$ for simplicity.
	Equivalently,
	\begin{equation} \label{eq:Sigma_st}
		\Sigma_{s,t} (r, z)
		~:=~
		\sigma^2_{r} \big(  z \big) \left(   \begin{array}{cc} 
		t- s &  - \int_s^t (A_r - A_s) dr  \\
		- \int_s^t (A_r - A_s) dr & \int_s^t (A_r - A_s)^2 dr
		\end{array} \right).
	\end{equation}
	Then, it is easy to check that $y \longmapsto f_{r,z}(s, x; t, y)$ is the density function of the Gaussian random vector
	$$
		\Big( x_1 + \sigma_r(z) (W_t - W_s), ~x_2 + \int_s^t \big( x_1 + \sigma_r(z) (W_u- W_s) \big) d A_u \Big)^{\top},
	$$
	so that, with $w:=\wr_{s,t}(x,y)$ {as in} \eqref{eq:def_w_xy},
	$$
		f_{r,z}(s,x;t,y)
		~=~ 
		\frac1{2\pi~ \det{ \Sigma_{s,t}(r, z)}^{\frac12}} \exp \Big( -\frac{1}{2} \big\langle \Sigma^{-1}_{s,t}(r, z) w, w \big\rangle \Big).
	$$
	Let us also define the Gaussian transition probability function ${f^{\circ}}: \Theta \longrightarrow \R$ by
	\begin{align} \label{eq: def fcirc} 
		f^{\circ} (s,x;t,y)
		~:=~
		\frac{1}{2\pi~ \det{\Sigmab_{s,t}( 4 \bar \ar) }^{\frac12}} \exp \Big( - \frac{1}{2} \big \langle \Sigma^{-1}_{s,t}( 4 \bar \ar) w, w \big \rangle \Big),
		~~(s,x,t,y) \in \Theta.
	\end{align}

	As a first main result, we show that $f$ is well-defined 	under some conditions on the coefficients $\alpha$ and $\beta_0$,
	and then provide some first regularity and bound estimates.

	\begin{Theorem}\label{thm:f_well_defined} 
		Let Assumption \ref{hyp:A}.$\mathrm{(i)}$ and Assumption \ref{hyp: stand ass}.$\mathrm{(i)}$ hold true.
		
		\vspace{0.5em}
		
		\noindent $\mathrm{(i)}$
		Assume that 
		\begin{equation} \label{eq:def_kappa0}
			\kappa_0 
			~:=~
			\frac{1-\beta_{0}}{2}\wedge( \alpha  - \beta_0 ) 
			~>~
			0.
		\end{equation}
		Then $\Phit$ in \eqref{eq:def_Phit}-\eqref{eq: def Delta L k} is well-defined, and so is $f: \Theta \longrightarrow \R$ in \eqref{eq:def_f_transition_proba}.
		Moreover, $f$ is continuous on $\Theta$, and there exists a constant $C>0$ such that
		\begin{equation} \label{eq:up_bound_f}
			{\big| f(s,x ;t,y) \big|}
			~\le~  C f^{\circ}(s,x;t,y),
			~~\mbox{for all}~
			(s,x,t,y )\in \Theta.
		\end{equation}

		\noindent $\mathrm{(ii)}$
		{Assume that} \eqref{eq:def_kappa0} holds and that 	
		\begin{align}\label{eq: kappa1} 
			\kappa_{1}
			~:=~
			\kappa_{0}+\frac{1-\beta_{0}}{2}
			~=~
			(1-\beta_{0})\wedge (\frac12 +\alpha-\frac32 \beta_{0})
			~>~
			0.
		\end{align}
		Then, {the partial derivative $(s,x; t, y)\in \Theta\mapsto \partial_{x_1} f(s,x; t, y)$  exists, is continuous on $\Theta$,}
		and, for some constant $C_{\eqref{eq: them estimation derive fb}}>0$,
		\begin{align}\label{eq: them estimation derive fb} 
			\big|\partial_{x_1} f(s,x; t,y) \big|
			~\le~
			\frac{C_{\eqref{eq: them estimation derive fb}}}{(t-s)^{1-\kappa_{1}}} f^{\circ}(s,x;t,y),
			~\mbox{for all}~
			(s,x,t,y) \in \Theta.
		\end{align}		
	\end{Theorem}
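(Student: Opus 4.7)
The plan is to carry out the parametrix programme outlined in Section 1 with careful tracking of the time singularities caused by the degeneracy of $\Sigma_{s,t}$. The construction is first performed in the transformed coordinates on $\ft$, and the resulting estimates are transported back to $f$ through the smooth linear transformation $\AMp$, which is uniformly bounded on $[0,T]$; Gaussian envelopes carry over between the two coordinate systems up to multiplicative constants. The starting point is a sharp Gaussian estimate on the frozen density $f_{r,z}$ (equivalently $\ft_{r,z}$): using Assumption \ref{hyp:A}.$\mathrm{(i)}$ and \eqref{eq : unif elliptic}, one checks $\det{\Sigma_{s,t}(r,z)} \asymp (t-s)^2 m_{s,t}$ uniformly in $(r,z)$, from which $f_{r,z}(s,x;t,y) \le C f^\circ(s,x;t,y)$ together with anisotropic derivative bounds. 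In particular, by \eqref{eq : hyp vitesse explo} the $(1,1)$ entry of $\Sigma^{-1}_{s,t}$ is of order $\tilde m_{s,t}/[(t-s)m_{s,t}] \le C(t-s)^{-1-\beta_0}$, which quantifies the cost of differentiating in the $x_1$ direction.

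The key step is the master estimate on $\Deltat_0 = (\Lct - \Lct^{t,y})\ft_{t,y}$. The drift contribution is controlled by $|\mut_s(x)| \le \br$, and the diffusion contribution involves $\sigmat_s(x)^2 - \sigmat_t(y)^2$. The anisotropic H\"older condition \eqref{eq : hyp holder coeff bar sigma} is precisely calibrated so that the factors $|\wr_{s,t}|^{2\alpha/(1+\beta'_i)}$ absorb into a factor $(t-s)^\alpha$ after taking Gaussian moments of $\ft_{t,y}$, while the Hessian contributes the singularity $(t-s)^{-1-\beta_0}$ from Step~1. Balancing, one obtains
\begin{equation*}
\big| \Deltat_0(s,x;t,y) \big| \le C (t-s)^{-1+\kappa_0} \, f^\circ(s,\AMp_{s}^{-1}x;t,\AMp_{t}^{-1}y),
\end{equation*}
and the condition $\kappa_0 > 0$ is exactly what makes this time singularity integrable. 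By induction on $k$, using the Chapman--Kolmogorov-type inequality $\int_{\R^2} f^\circ(s,x;r,z) f^\circ(r,z;t,y) dz \le C f^\circ(s,x;t,y)$, which follows from the Gaussian structure of $f^\circ$ and the constant $4\bar\ar$ built into \eqref{eq: def fcirc}, together with the beta identity $\int_s^t (r-s)^{-1+a}(t-r)^{-1+b} dr = B(a,b)(t-s)^{-1+a+b}$, the bound propagates through $\Deltat_k$ with a gamma denominator $\Gamma((k+1)\kappa_0)$, guaranteeing absolute convergence of the series \eqref{eq:def_Phit} and yielding the companion bound $|\Phit(s,x;t,y)| \le C(t-s)^{-1+\kappa_0} f^\circ(s,\AMp_{s}^{-1}x;t,\AMp_{t}^{-1}y)$.

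With $\Phit$ controlled, a further application of Chapman--Kolmogorov and of the beta identity with exponents $1$ and $\kappa_0$ shows that the convolution integral in \eqref{eq: def ft recursif} is absolutely convergent, which defines $\ft$ and hence $f$ through \eqref{eq:def_Phi}--\eqref{eq:def_f_transition_proba}; the Gaussian envelope reads then as $f^\circ$ in original coordinates, giving \eqref{eq:up_bound_f}. Continuity of $f$ on $\Theta$ follows from dominated convergence, using continuity of each $\Deltat_k$, inherited from that of $(\mut,\sigmat)$ and smoothness of the Gaussian kernels. For part (ii), I would differentiate \eqref{eq:def_f_transition_proba} with respect to $x_1$ under the integral sign. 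The pointwise Gaussian bound $|\partial_{x_1} f_{r,z}(s,x;t,y)| \le C(t-s)^{-1+(1-\beta_0)/2} f^\circ(s,x;t,y)$, obtained by differentiating the explicit density and using $\sqrt{(\Sigma^{-1})_{11}} \le C(t-s)^{-(1+\beta_0)/2}$, combines with the singularity $(t-s)^{-1+\kappa_0}$ of $\Phi$ in the convolution; the beta identity then produces the combined exponent $-1 + (1-\beta_0)/2 + \kappa_0 = -1+\kappa_1$, which is exactly \eqref{eq: them estimation derive fb}. Differentiation under the integral sign is legitimate precisely when \eqref{eq: kappa1} holds, and continuity of $\partial_{x_1} f$ follows as above.

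The main obstacle is the master estimate of Step~2: verifying that the non-standard exponents $2\alpha/(1+\beta'_i)$ in \eqref{eq : hyp holder coeff bar sigma} are tuned exactly to the anisotropic Gaussian moments of the degenerate $\ft_{t,y}$, so as to produce the time singularity $(t-s)^{-1+\kappa_0}$ with no deterioration, and then propagating this balance through the convolution induction without losing the factorial decay in $k$. Once this is in place, the remaining steps are standard parametrix bookkeeping, and the change of variables back to $f$ is transparent thanks to the boundedness and smoothness of $\AMp$.
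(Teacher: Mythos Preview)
Your proposal is correct and follows essentially the same parametrix approach as the paper: the Gaussian comparison $f_{r,z}\le C f^\circ$, the master estimate $|\Deltat_0|\le C(t-s)^{-1+\kappa_0}\ft^\circ$, the gamma-factor induction on $\Deltat_k$, and the beta-function bookkeeping for the convolutions are exactly the steps carried out in Lemmas~\ref{lemma:Sigamw}--\ref{lem : estime L-Lfy} and Propositions~\ref{prop:existence_continuite_Phi}--\ref{prop : fb C1}. Two small sharpenings worth noting: $f^\circ$ is a genuine transition density (of the process with constant diffusion $4\bar\ar$), so Chapman--Kolmogorov holds with equality and $C=1$; and the absorption of the H\"older factors $|\wr_{s,t}|^{2\alpha/(1+\beta'_i)}$ happens \emph{pointwise} via the extra exponential decay $\varpi^1$ in Lemma~\ref{lem: control density} (made possible by the factor $4$ in \eqref{eq: def fcirc}), not via integration --- this pointwise control is what lets the estimate propagate through the convolutions.
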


	Under further conditions, we can obtain more regularity of $f$ and then check that it is the transition probability function of the Markov process $(X,I)$.
	To be more precise, let us rephrase this in terms of path-dependent functionals.
	For $0 \le s < t \le T$, $\xr \in D([0,T])$ and $y \in \R^2$, we set
	\begin{align}\label{eq: def f bar on path} 
		(\fr,\fr^{\circ})(s,\xr;t,y)
		~:=~
		(f,f^{\circ}) \big(s, \xr(s), I_s(\xr); t,y \big ),
		~\mbox{with}~
		I_s(\xr) := \int_{0}^{s} \xr(r)dA_{r}.
	\end{align}
	We now fix $\ell : [0,T] \x \R^2 \longrightarrow \R$ and $g: \R^2 \longrightarrow \R$   such that, 
	for some constants $C_{{\ell,g}} > 0$ and $\alpha_{\ell} > 0$,
	\begin{equation} \label{eq:bound_g_l}
		\big| \ell(t, x) \big| + \big| g(x) \big| 
		~\le~
		C_{{\ell,g}} \exp \big( C_{{\ell,g}} |x| \big),
	\end{equation}
	and
	\begin{equation} \label{eq:holder_l}
		\big| \ell(t,x) - \ell(t, x') \big| 
		~\le~
		C_{{\ell,g}} \big( e^{C_{{\ell,g}}|x|} + e^{C_{{\ell,g}}|x'|} \big) \Big( |x_1 - x_1'|^{\frac{2 \alpha_{\ell}}{1+ \beta_1'}} + |x_2 - x_2'|^{\frac{2 \alpha_{\ell}}{1+ \beta_2'}} \Big),
	\end{equation}
	for all $t \in [0,T]$ and $x,x' \in \R^2$.
	In view of the upper-bound estimate of $f$ in \eqref{eq:up_bound_f}, we can then define
	\begin{equation} \label{eq:def_vr}
		\vr(s,\xr) 
		~:=
		\int_s^T\!\! \int_{\R^2} \!\! \ell(t,y) \fr(s, \xr; t, y) dy dt 
		~+
		\int_{\R^2} \!\! g(y) \fr(s, \xr; T, y) dy,
		~~
		(s,\xr) \in [0,T) \x D([0,T]).
	\end{equation}

	\begin{Remark} \label{rem:PPDE_f_rz}
		By its definition in \eqref{eq: def f bar on path}, it is straightforward to check that
		$$
			\partial_{\xr} \fr(s, \xr; t, y) 
			~=~ 
			\partial_{x_1} f \big( s, \xr(s), I_s(\xr); t, y \big).
		$$
		Similarly, let us define, for $(r,z), (t,y) \in [0,T] \x \R^d$,
		$$
			\fr_{r,z}(s, \xr; t,y) ~:=~ f_{r,z}(s, \xr(s), I_t(\xr); t,y),~(s, \xr) \in [0,t) \x D([0,T]).
		$$
		Then, the functional 
		$(s,\xr) \longmapsto \fr_{r,z}(s, \xr; t,y)$ is a classical solution to the PPDE
		\begin{equation} \label{eq:PPDE_f_rz}
			\partial_s \fr_{r,z}(s,\xr; t,y) 
			+
			\frac12 \sigma_r(z)^{2} \partial^2_{\xr \xr} \fr_{r,z} (s, \xr; t, y) = 0,
			~\mbox{for}~(s,\xr) \in [0,t) \x D([0,T]).
		\end{equation}
	\end{Remark}

	 \begin{Theorem} \label{thm:c12}
 		Let Assumptions \ref{hyp:A} and \ref{hyp: stand ass} hold true.
		Assume that \eqref{eq:def_kappa0}, \eqref{eq: kappa1}, \eqref{eq:bound_g_l} and \eqref{eq:holder_l} hold, and that there exists $\alpha_{\Phi}\in \R$ such that 
		$$
			0<\alpha_{\Phi}<\kappa_{0}\wedge \hat \alpha_\Phi\wedge \min\limits_{i=1,2} \frac{1+\beta'_{i}}{2},
			~~\mbox{with}~~
			\hat \alpha_\Phi
			:=
			\frac12- \beta_{0}-\frac{\widehat{\Delta \beta}}{2}  -\frac{ (\beta_{0}+1-2\alpha)^{+}}2,
		$$ 
		where
		\begin{align}\label{eq: def wide hat Delat beta} 
			\widehat{\Delta \beta}:=\max\left\{ \beta_{0}-\beta_{1}\;,\; \beta_{3}-\beta_{2}\right\},
		\end{align}
		and
		$$
			 {\min \Big(\frac{2\beta_{4}+1+\beta'_{1}}{1+\beta'_{2}}, 1 \Big) \min\{ \alpha_{\Phi},\alpha_{\ell},\alpha\}- \beta_0>0. }
		$$
		
		\noindent $\mathrm{(i)}$
		For each $(t,y)\in (0,T]\x \R^{2}$, the path-dependent functional $\fr(\cdot; t, y)$ belongs to $\Cb^{1,2}([0,t))$.
 	\vspace{0.5em}

		\noindent $\mathrm{ {(ii)}}$ $\vr\in  \Cb^{1,2}([0,T))$ and it solves the PPDE \eqref{eq:ppde}.
		Moreover, there exists $C>0$ such that, for all $(s,\xr) \in [0,T) \x D([0,T])$,

		\begin{equation} \label{eq:bound_dvr}
			\big| \partial_{\xr}   \vr(s,\xr) \big|
			~\le~
			\frac{C e^{C (|\xr_s| + |I_s(\xr)|)}}{(T-s)^{1-\kappa_1}},
			~\mbox{and}~
			\big| \partial_{t}   \vr(s,\xr) \big|
			+
			\big| \partial^{2}_{\xr}     \vr(s,\xr) \big|
			~\le~
			\frac{C e^{C (|\xr_s| + |I_s(\xr)|)}}{(T-s)^{1+\beta_0}}.
		\end{equation}

If, in addition, $g: \R^2 \longrightarrow \R$ is continuous, then $\vr$ is the unique classical solution to the PPDE \eqref{eq:ppde} satisfying 
		\begin{align}\label{eq: borne croissance vr et cond bord} 
			\lim_{t \nearrow T} \vr(t,\xr) = \bar g(\xr)
			{~~and ~   |\vr(s,\xr)| \le Ce^{C  (|\xr_s| + |I_s(\xr)|)}}
		\end{align}
		{for all $(s,\xr) \in [0,T]\x C([0,T])$, for some $C>0$.}
		\vspace{0.5em}
		
		\noindent $\mathrm{ {(iii)}}$ The SDE \eqref{eq:SDE_XI} has a unique weak solution $X$. 
		Moreover,   $(X, I)$ is a strong Markov process with transition probability given by $f$ and 
		\begin{equation} \label{eq:vr_as_exp}
			\vr(s, \xr) 
			~=~
			\E \Big[ \int_s^T \ell(X_t, I_t) dt + g(X_T, I_T) \Big| X_s = \xr(s), I_s = I_s(\xr) \Big],\;(s,\xr)\in [0,T]\x D([0,T]).
		\end{equation}

	\end{Theorem}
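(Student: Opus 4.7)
The plan is to exploit the parametrix representation \eqref{eq:def_f_transition_proba} together with the fact that each frozen kernel $\fr_{r,z}$ solves \eqref{eq:PPDE_f_rz} in Dupire's sense, and to differentiate under the integral using H\"older estimates on $\Phi$. The pivotal intermediate step will be to establish that $\Phit$ (equivalently $\Phi$) is H\"older continuous in its first two arguments with exponent $\alpha_\Phi$. This is obtained by induction on the iterated kernels $\Deltat_k$ in \eqref{eq: def Delta L k}: the H\"older bound \eqref{eq : hyp holder coeff bar sigma}, transferred to the rescaled coefficient $\sigmat$ via \eqref{eq: def mu sigma A}, gives a bound on $\Deltat_0$ of the form ``Gaussian kernel times $(t-s)^{-1-\beta_0+\alpha}$ plus a H\"older tail in $(s,x)$''; iterating through the convolution structure produces a geometric series in $(t-s)^{\kappa_0}$, convergent thanks to $\kappa_0>0$, and the extra degree of regularity $\alpha_\Phi<\hat\alpha_\Phi$ survives the convolution because $\hat\alpha_\Phi$ is exactly the margin left after absorbing the singular factors $(t-s)^{-\beta_0}$ and $(t-s)^{-\widehat{\Delta\beta}/2}$ coming respectively from the frozen generator and from the non-diagonal quadratic form entering $\Sigma_{s,t}$.

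\vspace{0.5em}

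For part (i), once H\"older regularity of $\Phi$ is in hand, I would differentiate \eqref{eq:def_f_transition_proba} term by term. The first vertical derivative of $\fr$ is already supplied by Theorem \ref{thm:f_well_defined}(ii). For the second, differentiating the frozen Gaussian $\fr_{r,z}(s,\xr;r,z)$ twice in $\xr$ produces a factor of order $(r-s)^{-1-\beta_0}$; the standard symmetrization trick, splitting $\Phi(r,z;t,y)=\Phi(s,x;t,y)+[\Phi(r,z;t,y)-\Phi(s,x;t,y)]$, uses H\"older regularity of the second bracket to gain a factor $(r-s)^{\alpha_\Phi}$ and integrability in $z$, while the first bracket contributes through a vanishing-moment integral of the Gaussian. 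The Dupire horizontal derivative is recovered from \eqref{eq:PPDE_f_rz}: the frozen equation gives $\partial_s\fr_{r,z}$ explicitly, and the contribution of the integral term is handled by the same symmetrization together with the boundary term at $r=s$, which equals $-\Phi(s,x;t,y)$ and is precisely what is needed so that $\fr(\cdot;t,y)$ solves a backward PPDE.

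\vspace{0.5em}

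For part (ii), I would verify directly that $(\partial_t+\mub\partial_\xr+\tfrac12\sigmab^2\partial^2_{\xr\xr})\fr(\cdot;t,y)=0$ by combining the Leibniz computation just sketched with the integral identity \eqref{eq:Phit_int_eq} rewritten in path variables. The bounds \eqref{eq:bound_dvr} then follow by integrating \eqref{eq: them estimation derive fb} and the second-order analogue against $\ell$ and $g$, using \eqref{eq:bound_g_l} and \eqref{eq:holder_l}; the numerical condition $\min\bigl(\frac{2\beta_4+1+\beta'_1}{1+\beta'_2},1\bigr)\min\{\alpha_\Phi,\alpha_\ell,\alpha\}-\beta_0>0$ is exactly what guarantees that the H\"older modulus of $\ell$, once composed with $(X,I)$ and coupled with the path oscillation of $A$ encoded by $\beta_4$, keeps the source term integrable in $t$. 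The terminal condition $\vr(T-,\xr)=\bar g(\xr)$ follows from the concentration of $f_{t,y}$ at the diagonal as $s\nearrow t$ combined with continuity of $g$. For uniqueness, I would apply It\^o-Dupire's formula to any classical solution $\vr'$ satisfying the growth bound \eqref{eq: borne croissance vr et cond bord}, composed with the weak solution $X$ built in part (iii): the PPDE kills the drift, the exponential growth bound combined with the Gaussian tail of $(X,I)$ coming from \eqref{eq:up_bound_f} permits taking expectation, and this yields the representation \eqref{eq:vr_as_exp}, hence uniqueness.

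\vspace{0.5em}

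For part (iii), weak existence of \eqref{eq:SDE_XI} follows from the associated martingale problem: using the functionals $\vr$ built in (ii) for varying $(\ell,g)$ as test functions gives, via It\^o-Dupire, a family of martingales that identifies the law of any limit of Euler-type approximations of \eqref{eq:SDE_XI}, and the Chapman-Kolmogorov consistency of $f$ follows from uniqueness of the classical solution to the PPDE. Uniqueness in law is deduced from the Feynman-Kac identity \eqref{eq:vr_as_exp} applied to a separating family of bounded continuous $(\ell,g)$, and the strong Markov property from the time-homogeneity of the martingale problem. The main obstacle will be part (i), and more specifically obtaining simultaneous control of the second vertical derivative and the horizontal derivative of $\fr$: the exponent $\alpha_\Phi$ must be chosen to dominate the singularity coming from the second derivative of the Gaussian, absorb the anisotropic explosion governed by $\widehat{\Delta\beta}$, and remain compatible with integrability in the $z$-variable against the Gaussian, which is precisely the content of the triple constraint $0<\alpha_\Phi<\kappa_0\wedge\hat\alpha_\Phi\wedge\min_i(1+\beta'_i)/2$.
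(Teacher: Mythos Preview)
Your overall architecture for parts (i) and (ii) matches the paper: H\"older regularity of $\Phi$ by induction on the $\Deltat_k$ (this is Lemma~\ref{lem: PHI holder}), then a potential-type estimate to pass the second vertical derivative through the $z$-integral (Lemma~\ref{lem : borne derive seconde fbar}), then the PPDE follows from the integral equation for $\Phi$ exactly as you indicate. The bounds \eqref{eq:bound_dvr}, the terminal condition, and uniqueness via It\^o--Dupire are all handled as you describe.

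There is, however, a genuine technical slip in your symmetrization. You propose to split $\Phi(r,z;t,y)=\Phi(s,x;t,y)+[\Phi(r,z;t,y)-\Phi(s,x;t,y)]$. This does not work as written, for two reasons. First, the H\"older estimate on $\Phi$ (Lemma~\ref{lem: PHI holder}) is only in the \emph{spatial} argument at fixed time $r$; your difference carries a time increment $r-s$ as well, which is not controlled. Second, the ``vanishing-moment'' claim for the constant piece fails: $\int_{\R^2}\partial^2_{x_1x_1}f_{r,z}(s,x;r,z)\,dz\neq 0$ in general, because the freezing point $z$ enters the volatility and so the integrand is not a pure $x$-derivative of a probability density in $z$. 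The paper's fix (see the decomposition $I=I_1+I_2+I_3$ in the proof of Lemma~\ref{lem : borne derive seconde fbar}) is to center instead at $\check x:=E^{-1}_{s,r}(x)$, the mean of the frozen Gaussian at time $r$: one splits $h(z)=h(\check x)+[h(z)-h(\check x)]$, uses spatial H\"older of $\Phi(r,\cdot;t,y)$ for the bracket, and introduces an additional comparison $f_{r,z}\to f_{r,\check x}$ (controlled by H\"older of $\sigma$) so that the remaining integral $\int\partial^2_{x_1x_1}f_{r,\check x}(s,x;r,z)\,dz$ genuinely vanishes. This is where the condition involving $\beta_4$ enters, via $|A_r-A_s|$ appearing in $\check x$.

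For part (iii) the paper takes a different route from yours. Rather than Euler approximations plus a direct Feynman--Kac uniqueness argument, it invokes the Stroock--Varadhan Markovian selection procedure: from the (nonempty, compact) solution set $\Pc(t,x)$ of the martingale problem one extracts two strong Markov selections $\P^{+}_{t,x}$ and $\P^{-}_{t,x}$ by successively maximizing, respectively minimizing, along a measure-determining sequence of functionals. Feynman--Kac applied to the classical solutions $\vr$ from part (ii) then forces $\P^{+}=\P^{-}$, and the selection construction guarantees that this common measure is the unique element of $\Pc(t,x)$. Your approach is viable too, but note that knowing $\E^{\P}[\int_s^T\ell+g]$ for all $(\ell,g)$ only pins down one-dimensional marginals of $(X,I)$; to conclude uniqueness in law you must still argue (e.g.\ via regular conditional probabilities restarting the martingale problem at intermediate times) that every weak solution is Markov with transition kernel $f$. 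The Markovian selection packages this step.
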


	\begin{Remark}
		To check the conditions on $\alpha$ and $\beta_i,~ i=0, \cdots, 4$ in Theorem \ref{thm:c12},
		let us stay in the setting of Example \ref{example : cas de A}.
		
		\vspace{0.5em}
		
		\noindent $\mathrm{(i)-(ii)}$ In these cases, $\kappa_{0}=\frac12 \wedge \alpha$, $\kappa_{1}=1 \wedge (\alpha+\frac12)$, $\hat \alpha_{\Phi}=\frac12-[\frac12-\alpha]^{+}=\alpha\wedge \frac12$ and we can choose $\alpha_{\Phi}\in (0,\frac12\wedge \alpha)$.

		\noindent $\mathrm{(iii)}$  In this case, $\kappa_{0}=\frac{1-2(\gamma_{1}-\gamma_{2})}2 \wedge (\alpha-2(\gamma_{1}-\gamma_{2}))$ which requires that $\gamma_{1}-\gamma_{2}< \frac12\wedge \frac{\alpha}{2}$ to ensure that $\kappa_{0}>0$. Then, $\kappa_{1}>0$ and  $\hat \alpha_{\Phi}=\frac12- 3(\gamma_{1}-\gamma_{2})-[\gamma_{1}-\gamma_{2}+\frac12-\alpha]^{+}$. If $2\alpha/(1+\beta'_{1})\le 1$, then $\alpha\le 1/2$ and therefore $\gamma_{1}-\gamma_{2}+\frac12-\alpha\ge 0$. In this case, we can choose  $\alpha_{\Phi}\in (0,\alpha- {4}(\gamma_{1}-\gamma_{2}))$ if $\gamma_{1}-\gamma_{2}<\alpha/ {4}$. If $2\alpha/(1+\beta'_{1})> 1$, then $(\mu,\sigma)$ does no depend on its first argument, and the different cases can also be treated explicitly, leading to a suitable $\alpha_{\Phi}$ when $\gamma_{1}-\gamma_{2}$ is small enough.
	\end{Remark}

	The conditions in Theorem \ref{thm:c12} ensure that $f$ is smooth enough, so that one can basically apply the Feynman-Kac's formula to justify that it is the transition probability function of a Markov process.
	It can then be  used to prove that the wellposedness (existence and uniqueness) of the SDE \eqref{eq:SDE_XI}.
	If one already knows that the SDE \eqref{eq:SDE_XI} has  a unique  a weak solution, 
	then one can  {rely on} Theorem \ref{thm:vr_uniqueX}  below,  {which requires}  less technical conditions on $A$ and {$(\mu,\sigma)$}, to check that $f$ is the corresponding transition probability function. In this case, the path-dependent functional $\vr$ defined above may only be $\Cb^{0,1}([0,T))$, but it is enough to  deduce that  \eqref{eq:vr_as_exp} holds, and obtain it's It\^{o}-Dupire's decomposition, whenever it satisfies for instance one of the conditions a.~or b.~of Theorem \ref{thm:vr_uniqueX} .

	\begin{Theorem}\label{thm:vr_uniqueX} 
		Let Assumption \ref{hyp:A}.(i) and Assumption \ref{hyp: stand ass}.(i) hold true,
		and assume that the SDE \eqref{eq:SDE_XI} has a unique weak solution, so that the corresponding process $( X, I)$ is a strong Markov process.
		
		\vspace{0.5em}
		
		\noindent $\mathrm{(i)}$ Assume in addition that \eqref{eq:def_kappa0} holds true so that $f$ is well defined.
		Then, $f$ is the transition probability function of $( X, I)$,
		and \eqref{eq:vr_as_exp} {holds whenever \eqref{eq:bound_g_l} does}.
		
		\vspace{0.5em}
		
		\noindent $\mathrm{(ii)}$ Assume that \eqref{eq:def_kappa0}, \eqref{eq: kappa1} {and \eqref{eq:bound_g_l}} hold. {Then,}
		  $\vr\in \Cb^{0,1}([0,T))$. 
		Suppose in addition that one of the following  holds:
		\begin{itemize}
			\item[{\rm (a)}] there exists $C_{\eqref{eq: cond vr Lip}}>0$ such that 
			\begin{align}\label{eq: cond vr Lip} 
				\big|\vr(t,\xr)-\vr(t,\xr') \big| 
				~\le~
				C_{\eqref{eq: cond vr Lip}}  \int_{0}^{t} |\xr(r)-\xr'(r)| d|A|_{r},
			\end{align}
			for all $t \in [0,T]$, $\xr,\xr'\in D([0,T])$ such that $\xr(t) = \xr'(t)$, in which $|A|$ denotes the total variation of $A$.
			\item[{\rm (b)}] $A$ is monotone and $  0 < \frac{1+\beta_{2}-\beta_{0}}{2+4\beta_{4}} < 1-\frac{\beta_3-\beta_{2}+\beta_{0}}{2}$.
		\end{itemize}
		Then, 
		\begin{align}\label{eq: Ito vr} 
			\vr(t, X)=\vr(0,X)+\int_{0}^{t} \partial_{\xr} \vr(s, X) \sigmab_s (X) dW_s
			-\int_0^t \bar \ell_{{s}}( X) ds,
			~~t \in [0,T]. 
		\end{align}

	\end{Theorem}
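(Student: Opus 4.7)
For part (i), the strategy is to identify $f$ with the transition density of $(X,I)$ through an approximation argument. I would introduce smooth approximations: $\mu^{n},\sigma^{n}$ obtained from $\mu,\sigma$ by spatial mollification (preserving the ellipticity \eqref{eq : unif elliptic} and the H\"older condition \eqref{eq : hyp holder coeff bar sigma} uniformly in $n$), together with an absolutely continuous approximation $A^{n}$ of $A$ preserving the constants of Assumption \ref{hyp:A}(i) uniformly. For the approximating problem, all hypotheses of Theorem \ref{thm:c12} are satisfied, so the corresponding parametrix-constructed density $f^{n}$ is the transition density of the (here strong) solution $(X^{n},I^{n})$ of the approximating SDE. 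Continuity in the coefficients of each term in the parametrix series \eqref{eq:def_Phit}--\eqref{eq: def Delta L k}, combined with the uniform Gaussian bound \eqref{eq:up_bound_f}, gives $f^{n}\to f$ pointwise on $\Theta$ with a uniform dominating function. Standard moment bounds (BDG and Gronwall under \eqref{eq : unif elliptic}) yield tightness of the laws of $(X^{n},I^{n})$, and any accumulation point is a weak solution of the original SDE, hence coincides in law with $(X,I)$ by the assumed uniqueness. Passing to the limit in $\int\phi(y)f^{n}(s,x;t,y)dy=\E[\phi(X^{n}_{t},I^{n}_{t})\mid X^{n}_{s}=x_{1},I^{n}_{s}=x_{2}]$ for bounded continuous $\phi$ identifies $f$ as the transition density. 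A monotone class argument, together with the Gaussian upper bound and \eqref{eq:bound_g_l}, then extends the representation to $\phi=\ell(t,\cdot)$ and $\phi=g$, yielding \eqref{eq:vr_as_exp} via Fubini.

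For part (ii), the $\Cb^{0,1}([0,T))$ regularity of $\vr$ is immediate from Theorem \ref{thm:f_well_defined}(ii): the singularity $(T-s)^{-(1-\kappa_{1})}$ in \eqref{eq: them estimation derive fb} is integrable since $\kappa_{1}>0$, and this, combined with dominated convergence based on \eqref{eq:up_bound_f} and \eqref{eq:bound_g_l}, lets one differentiate \eqref{eq:def_vr} under the integrals, using \eqref{eq: def f bar on path} to translate $\partial_{x_{1}}f$ into $\partial_{\xr}\fr$. For the It\^o-Dupire decomposition \eqref{eq: Ito vr}, note first that part (i) implies $t\mapsto\vr(t,X)+\int_{0}^{t}\bar\ell_{s}(X)ds$ is a $\P$-martingale. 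I would then apply a functional It\^o formula that only requires first-order vertical differentiability (in the spirit of \cite{bouchard2021c}); uniqueness of the semimartingale decomposition would force the martingale part to equal $\int_{0}^{\cdot}\partial_{\xr}\vr(s,X)\sigmab_{s}(X)dW_{s}$, giving \eqref{eq: Ito vr}. Conditions (a) and (b) supply the extra horizontal-type regularity required by this extended formula: (a) directly furnishes a Lipschitz estimate for $\vr$ in the $|A|$-variation norm; under (b), monotonicity of $A$ together with the exponent inequality $0<(1+\beta_{2}-\beta_{0})/(2+4\beta_{4})<1-(\beta_{3}-\beta_{2}+\beta_{0})/2$ lets one extract the same type of estimate by combining the derivative bound \eqref{eq: them estimation derive fb} with the H\"older bound \eqref{eq:holder A} on $A$.

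The main obstacle lies in part (ii): the regularity $\vr\in\Cb^{0,1}([0,T))$ is strictly weaker than the $\Cb^{1,2}$ needed for the classical Dupire--Cont--Fourni\'e It\^o formula, so one must rely on a variant that requires only first-order vertical differentiability plus horizontal-type control. Under (a) that control is hypothesized directly; under (b) it must be derived by carefully balancing the singularity $(T-s)^{-(1-\kappa_{1})}$ of $\partial_{\xr}\vr$ against the $A$-regularity exponent $\beta_{4}$ and the size of the quadratic form governing $f^{\circ}$, with the monotonicity of $A$ ensuring the sign control needed when integrating against $d A$. A secondary technical point in part (i) is the stability of the parametrix series under joint approximation of $(\mu,\sigma,A)$, which requires uniformity of the constants appearing in Assumptions \ref{hyp:A}(i) and \ref{hyp: stand ass}(i) along the approximating sequence.
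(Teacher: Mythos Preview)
Your overall architecture is correct, but there are two places where the plan diverges from what actually works.

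\textbf{Part (i).} Your approximation scheme---mollifying $(\mu,\sigma)$ and replacing $A$ by an absolutely continuous $A^{n}$---is more delicate than necessary and contains a real difficulty. Approximating $A$ changes $m_{s,t}$ and $\tilde m_{s,t}$, and the claim that the constants of Assumption~\ref{hyp:A}(i) can be preserved \emph{uniformly in $n$} is not obvious when $A$ is genuinely singular; without that uniformity you lose the dominated convergence that drives $f^{n}\to f$. The paper avoids this entirely: it keeps $A$ fixed and perturbs the SDE for $\Xt$ by adding $\eps_{n}\,dW^{\perp}_{t}$ to the second (degenerate) component. This makes the diffusion uniformly elliptic, so the parametrix density $\ft^{n}$ exists by classical results (e.g.\ \cite{francesco2005class}), and the covariance $\Sigmab^{n}_{s,t}$ differs from $\Sigmab_{s,t}$ only by the additive term $\eps_{n}^{2}(t-s)$ in the $(2,2)$ entry. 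The estimates \eqref{eq: estime Delta k} then hold for the perturbed $\Deltat^{n}_{k}$ uniformly in $n$, and $\ft^{n}\to\ft$ pointwise follows directly. Weak convergence of $\Xt^{n}\to\Xt$ plus uniqueness finishes the identification. Your route could perhaps be made to work, but it introduces a stability-in-$A$ problem that the paper's construction never faces.

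\textbf{Part (ii), case (b).} Here you point to the wrong ingredient. The bound \eqref{eq: them estimation derive fb} controls $\partial_{x_{1}}f$, but the quantity you need to estimate is
\[
\vr\big(s+\eps,X\big)-\vr\big(s+\eps,\,X_{s\wedge\cdot}\oplus_{s+\eps}(X_{s+\eps}-X_{s})\big),
\]
and the two paths agree at time $s+\eps$ in their first coordinate $X_{s+\eps}$; they differ only in $I_{s+\eps}(\cdot)$, i.e.\ in the \emph{second} coordinate $x_{2}$. What is actually required is a H\"older estimate for $f(s,x;t,y)$ in $x_{2}$ (this is Proposition~\ref{prop : holder f en x2} in the paper, obtained via $\partial_{x_{2}}f_{r,z}$ and \eqref{eq: norne Sigma-1 w 2}). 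With exponent $\alpha'\in\big(\frac{1+\beta'_{2}}{2+4\beta_{4}},\,1-\frac{\beta_{3}-\beta_{2}+\beta_{0}}{2}\big)$ one gets
\[
\E\big[|\vr(s+\eps,X)-\vr(s+\eps,X_{s\wedge\cdot}\oplus_{s+\eps}(X_{s+\eps}-X_{s}))|^{2}\big]\le C\,\eps^{\frac{\alpha'(2+4\beta_{4})}{1+\beta'_{2}}},
\]
and the lower bound on $\alpha'$ makes this exponent strictly larger than $1$, which is exactly the condition needed in \cite[Proposition~2.6]{bouchard2021c}. The monotonicity of $A$ is used so that $|I_{s+\eps}(X)-I_{s+\eps}(X_{s\wedge\cdot}\oplus\cdots)|\le \sup_{[s,s+\eps]}|X-X_{s}|\,|A_{s+\eps}-A_{s}|$, which combined with \eqref{eq:holder A} gives the $\eps^{\beta_{4}}$ factor. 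Your description of the mechanism (``balance the singularity of $\partial_{\xr}\vr$ against $\beta_{4}$'') is not how the argument runs.
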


	\begin{Remark}
		When  $b/\sigma$ is     bounded,  and $\sigma$ is Lipschitz in its space variable  in the sense that, for some constant $C_{\eqref{eq: coeff Lipschitz}}>0$, 
		\begin{equation} \label{eq: coeff Lipschitz}
			\big|\sigma_{s}(x)-\sigma_{s}(x') \big|
			~\le~
			C_{\eqref{eq: coeff Lipschitz}}|x-x'|,
			~~s \in [0, T],\; x,x'\in \R^{2},
		\end{equation}
		with $(\sigma_{s}(0))_{s\le T}$ bounded, 
		then the SDE \eqref{eq:SDE_XI} has a   unique weak solution.  
	\end{Remark}

	\begin{Remark} 
		To check the conditions in Theorem \ref{thm:vr_uniqueX}.$\mathrm{(ii).(b)}$, let us consider the situations of Example \ref{example : cas de A}.
		
		\vspace{0.5em}

		\noindent $\mathrm{(i)-(ii)}$ 
		In these cases, $\frac{1+\beta_{2}-\beta_{0}}{2+4\beta_{4}} = \frac12$ and $1-\frac{\beta_3-\beta_{2}+\beta_{0}}{2} = 1$,
		so that the conditions inTheorem \ref{thm:vr_uniqueX}.$\mathrm{(ii).(b)}$ hold true.

		\vspace{0.5em}

		\noindent $\mathrm{(iii)}$ 
		In this case, $\frac{1+\beta_{2}-\beta_{0}}{2+4\beta_{4}} = \frac12-\frac{\gamma_{1}-\gamma_{2}}{1+2\gamma_{2}}$ and
		$1-\frac{\beta_3-\beta_{2}+\beta_{0}}{2} =1-2(\gamma_{1}-\gamma_{2})$.
		Therefore, the conditions in Theorem \ref{thm:vr_uniqueX}.$\mathrm{(ii).(b)}$ hold true when $\gamma_{1}-\gamma_{2}$ is small enough.\\
	\end{Remark}

\section{Proofs}\label{sec: proofs}
 
This section is devoted to the proof of Theorems \ref{thm:f_well_defined}, \ref{thm:c12} and \ref{thm:vr_uniqueX}.

\subsection{A priori estimates}

	Recall that, with $w :=\wr_{s,t}(x,y)$ (see \eqref{eq:def_w_xy}),
	$$
		f_{r,z}(s,x;t,y)
		~:=~ 
		\frac1{2\pi \det{ \Sigma_{s,t}(r, z)}^{\frac12}} \exp \Big( -\frac{1}{2} \big\langle \Sigma^{-1}_{s,t}(r, z) w, w \big\rangle \Big),
	$$
	where
 	$$
		\Sigma_{s,t} (r, z)
		~:=~
		\sigma^2_{r} \big(  z \big) \left(   \begin{array}{cc} 
		t- s &  - \int_s^t (A_r - A_s) dr  \\
		- \int_s^t (A_r - A_s) dr & \int_s^t (A_r - A_s)^2 dr
		\end{array} \right).
	$$
	By direct computation, one has
	\begin{equation} \label{eq: det sigma bar}
		\det{\Sigmab_{s,t}(r, z) } 
		~=~
		\sigma^4_{r} \big( z \big) (t-s)^2 m_{s,t},
	\end{equation}
	and hence
	\begin{align}\label{eq: def Sigmab -1} 
		\Sigmab_{s,t}^{-1} (r, z)
		~=~ 
		\sigma^{-2}_{r}\big(z \big)
		\left(   \begin{array}{cc} 
		\frac{1}{t-s} \frac{\tilde m_{s,t}}{m_{s,t}} &  \frac{\int_s^t (A_r - A_s) dr}{(t-s)^2 m_{s,t}}  \\
		\frac{\int_s^t (A_r - A_s) dr}{(t-s)^2 m_{s,t}} & \frac{1}{(t-s) m_{s,t}}
		\end{array} \right).
	\end{align}

	The following quantities will play an important role in our analysis.  
	For $i=1,2$, $(r,z) \in \R^{2}$ and $(s,x,t,y)\in \Theta$, with $w :=\wr_{s,t}(x,y)$ (see \eqref{eq:def_w_xy}),
	we compute that
	\begin{equation} \label{eq:Dx1fb}
		\partial_{x_i} f_{r,z} (s,x; t,y) =  f_{r,z} (s,x; t, y) \Big( - \big(\Sigmab^{-1}_{s,t}(r, z) w \big)_i \Big),
	\end{equation}
	\begin{equation} \label{eq:Dx1x2fb}
		\partial^2_{x_1 x_i} f_{r,z} (s,x; t,y) 
		=
		f_{r,z}(s,x; t,y) 
		\Big( \big( \Sigmab^{-1}_{s,t}(r,z)  w \big)_1 \big( \Sigmab^{-1}_{s,t}(r,z)  w \big)_i 
			- \big( \Sigmab^{-1}_{s,t} (r,z) \big)_{1,i} \Big),
	\end{equation}
	and
	\begin{align} \label{eq:Dx1x1x2fb}
		\frac{\partial^3_{x_1 x_1 x_i} f_{r,z} (s,x; t,y)}{f_{r,z} (s,x; t, y)}
		=~
		& 2 \big( \Sigmab^{-1}_{s,t}(r,z) w \big)_1 \big( \Sigmab^{-1}_{s,t}(r,z) \big)_{1,i}  + \big( \Sigmab^{-1}_{s,t}(r,z) w \big)_i \big( \Sigmab^{-1}_{s,t}(r,z) \big)_{1,1}  \nonumber \\
		&
		- \big( \Sigmab^{-1}_{s,t}(r,z) w \big)_1^2 \big( \Sigmab^{-1}_{s,t}(r,z) w \big)_i.
	\end{align}
	
	Let us first provide some estimations in the following lemma. 

	\begin{Lemma} \label{lemma:Sigamw} 
		Let { Assumption \ref{hyp:A}.(i) hold.}
		Then, there exists constants 
		$C_{\eqref{eq: norne Sigma-1 w 1}}$, $C_{\eqref{eq: norne Sigma-1 w 2}}$, 
		$C_{{\eqref{eq: borne Sigma -1 11} }}$,
		$C_{{\eqref{eq: borne Sigma -1 12} }} > 0$,
		such that, for all $(s,x,t,y)\in \Theta$ and  $(r,z) \in [0,T] \x \R^{2}$, with $w := \wr_{s,t}(x,y)$ (see \eqref{eq:def_w_xy}), we have 
	\begin{align}\label{eq: norne Sigma-1 w 1} 
		\Big| \big(\Sigmab^{-1}_{s,t} (r, z) w \big)_1 \Big|
		&~=~
		\Big| \frac{\partial_{x_1} f_{r,z} (s,x; t,y)}{f_{r,z} (s,x; t, y)} \Big|
		~\le~
		\frac{C_{\eqref{eq: norne Sigma-1 w 1}}}{(t-s)^{\frac{1 + \beta_0}{2}}} \sqrt{\big \langle \Sigmab^{-1}_{s,t}(r,z) w, w \big \rangle},\\
		\label{eq: norne Sigma-1 w 2} 
		\Big| \big(\Sigmab^{-1}_{s,t} (r, z) w \big)_2 \Big|
		&~\le~
		\frac{C_{\eqref{eq: norne Sigma-1 w 2}} }{(t-s)^{\frac{1 + \beta_3}{2}}} \sqrt{\big \langle \Sigmab^{-1}_{s,t}(r,z) w, w \big \rangle}, \\
		\Big|  \big( \Sigmab^{-1}_{s,t} (r,z) \big)_{1,1} \Big| 
		&~\le~  \frac{C_{{\eqref{eq: borne Sigma -1 11} }}}{(t-s)^{1+ \beta_{0} }}\label{eq: borne Sigma -1 11}, \\
		\Big|  \big( \Sigmab^{-1}_{s,t} (r,z) \big)_{1,2} \Big| 
		&~\le~  \frac{C_{{\eqref{eq: borne Sigma -1 12} }}}{(t-s)^{1+\frac{\beta_{0}+\beta_{3}}{2}}}\label{eq: borne Sigma -1 12}.
	\end{align}
	\end{Lemma}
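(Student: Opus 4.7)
The plan is to read off all four bounds directly from the explicit formula \eqref{eq: def Sigmab -1} for $\Sigmab^{-1}_{s,t}(r,z)$, combined with the uniform ellipticity $\sigma^{2}\ge \underline\ar>0$ from \eqref{eq : unif elliptic} (which controls the prefactor $\sigma^{-2}_{r}(z)$) and the two-sided controls on $m_{s,t}$ and $\tilde m_{s,t}$ supplied by \eqref{eq : hyp vitesse explo}--\eqref{eq:order_m}. I will first estimate the entries of $\Sigmab^{-1}_{s,t}(r,z)$, and then deduce the bounds on the components of $\Sigmab^{-1}_{s,t}(r,z)\wr_{s,t}(x,y)$ by a Cauchy--Schwarz argument in the inner product defined by $\Sigmab^{-1}_{s,t}(r,z)$.

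For the $(1,1)$-entry in \eqref{eq: borne Sigma -1 11}, the explicit expression reads $\sigma^{-2}_{r}(z)(t-s)^{-1}\tilde m_{s,t}/m_{s,t}$, and the upper bound $\tilde m_{s,t}/m_{s,t}\le C_{\eqref{eq : hyp vitesse explo}}(t-s)^{-\beta_{0}}$ immediately gives decay $(t-s)^{-(1+\beta_{0})}$. The $(2,2)$-entry equals $\sigma^{-2}_{r}(z)/((t-s)m_{s,t})$, which by \eqref{eq:order_m} is bounded by $C(t-s)^{-(1+\beta_{3})}$; I will need this auxiliary bound for the last step even though it is not in the statement. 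For the off-diagonal $(1,2)$-entry in \eqref{eq: borne Sigma -1 12}, Cauchy--Schwarz applied to the integral yields
$$
\Big| \int_{s}^{t}(A_{r}-A_{s})\, dr \Big|
\le
(t-s)\sqrt{\tilde m_{s,t}},
$$
so that the entry is bounded by $C\sqrt{\tilde m_{s,t}}/((t-s)m_{s,t})$; rewriting $\sqrt{\tilde m_{s,t}}/m_{s,t}=\sqrt{(\tilde m_{s,t}/m_{s,t})\cdot(1/m_{s,t})}$ and invoking \eqref{eq : hyp vitesse explo} and \eqref{eq:order_m} jointly produces the exponent $-(1+(\beta_{0}+\beta_{3})/2)$.

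For the component bounds \eqref{eq: norne Sigma-1 w 1}--\eqref{eq: norne Sigma-1 w 2}, the key observation is that $\Sigmab^{-1}_{s,t}(r,z)$ is symmetric positive definite, so $\langle u,v\rangle_{*}:=\langle \Sigmab^{-1}_{s,t}(r,z)u,v\rangle$ defines an inner product. With $w:=\wr_{s,t}(x,y)$, the identity $(\Sigmab^{-1}_{s,t}(r,z)w)_{i}=\langle e_{i},w\rangle_{*}$ together with $\|e_{i}\|_{*}^{2}=(\Sigmab^{-1}_{s,t}(r,z))_{i,i}$ and Cauchy--Schwarz in $\langle \cdot,\cdot\rangle_{*}$ give
$$
\big|(\Sigmab^{-1}_{s,t}(r,z)w)_{i}\big|
\le
\sqrt{(\Sigmab^{-1}_{s,t}(r,z))_{i,i}}\,\sqrt{\langle \Sigmab^{-1}_{s,t}(r,z)w,w\rangle}.
$$
Plugging in the diagonal bounds established in the previous paragraph then yields \eqref{eq: norne Sigma-1 w 1} with exponent $(1+\beta_{0})/2$ and \eqref{eq: norne Sigma-1 w 2} with exponent $(1+\beta_{3})/2$; the equality part of \eqref{eq: norne Sigma-1 w 1} is just a rewriting of \eqref{eq:Dx1fb}.

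No step here is genuinely hard, so the main ``obstacle'' is only bookkeeping: the off-diagonal estimate must be produced by using \eqref{eq : hyp vitesse explo} and \eqref{eq:order_m} simultaneously so that the exponent is $(\beta_{0}+\beta_{3})/2$ rather than $\beta_{3}$ alone, and the Cauchy--Schwarz step for the components requires the intermediate $(2,2)$-bound, which is not listed in the statement but falls out of the same explicit formula.
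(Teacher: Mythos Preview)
Your proof is correct. For the entry bounds \eqref{eq: borne Sigma -1 11}--\eqref{eq: borne Sigma -1 12} you do exactly what the paper does: read off the explicit formula \eqref{eq: def Sigmab -1}, bound $\sigma^{-2}_{r}(z)$ by $1/\underline\ar$, and use Cauchy--Schwarz on $\int_{s}^{t}(A_{r}-A_{s})\,dr$ for the off-diagonal term.

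For the component bounds \eqref{eq: norne Sigma-1 w 1}--\eqref{eq: norne Sigma-1 w 2} your route differs from the paper's. The paper first rewrites the quadratic form as
\[
\sigma_{r}(z)^{2}\,\langle \Sigmab^{-1}_{s,t}(r,z)w,w\rangle
=\frac{1}{(t-s)^{2}m_{s,t}}\int_{s}^{t}\big((A_{u}-A_{s})w_{1}+w_{2}\big)^{2}\,du,
\]
expresses each $(\Sigmab^{-1}_{s,t}(r,z)w)_{i}$ as a similar integral, and applies Cauchy--Schwarz in $L^{2}([s,t])$. You instead apply Cauchy--Schwarz directly in the $\Sigmab^{-1}_{s,t}(r,z)$-weighted inner product on $\R^{2}$, obtaining $|(\Sigmab^{-1}w)_{i}|\le \sqrt{(\Sigmab^{-1})_{i,i}}\sqrt{\langle\Sigmab^{-1}w,w\rangle}$ and then plugging in the diagonal bounds (including the auxiliary $(2,2)$-bound you correctly flag). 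The two arguments yield the \emph{same} numerical inequality --- indeed $(\Sigmab^{-1})_{1,1}=\sigma^{-2}_{r}(z)\,\tilde m_{s,t}/((t-s)m_{s,t})$ is exactly the factor the paper's integral Cauchy--Schwarz produces --- so this is a repackaging rather than a different estimate. Your version is a little cleaner in that it bypasses the integral representation entirely and works for any symmetric positive definite matrix; the paper's version has the minor advantage of making the role of the assumption \eqref{eq : hyp vitesse explo} visible without first deriving the $(2,2)$-entry bound.
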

	
	\begin{proof} The bound in \eqref{eq: borne Sigma -1 11} and \eqref{eq: borne Sigma -1 12} are  immediate consequences of Assumption \ref{hyp:A}.(i) and  \eqref{eq: def Sigmab -1}, up to appealing to  Cauchy-Schwarz's inequality for the latter. 
		By direct computation, one has 
		\begin{align*}
			\sigma_{t}(z)^{2}\big \langle \Sigmab^{-1}_{s,t}(r, z) w, w \big \rangle
			&= 
			\frac{1}{(t-s)^2 m_{s,t}} \int_s^t \Big( (A_r - A_s)^2 w_1^2 + 2 (A_r - A_s) w_1 w_2 + w_2^2 \Big) dr \\
			&=
			\frac{1}{(t-s)^2 m_{s,t}} \int_s^t \big( (A_r - A_s)w_1 + w_2 \big)^2 dr.
		\end{align*}
		Hence, using Assumption \ref{hyp: stand ass} and \eqref{eq : hyp vitesse explo},
		\begin{align*}
			\underline\ar\Big| \big(\Sigmab^{-1}_{s,t}(r, z) w \big)_1 \Big|
			&\le \sigma_{t}( z)^{2}\Big| \big(\Sigmab^{-1}_{s,t}(r, z) w \big)_1 \Big| 
			\\
			&= 
			\frac{1}{(t-s)^2 m_{s,t}} \Big| \int_s^t \big(A_r -A_s\big) \big( (A_r - A_s) w_1 + w_2 \big) dr \Big| \\
			&\le
			\sqrt{ \frac{  \tilde m_{s,t}}{ (t-s)  m_{s,t}} }\sqrt{\bar \ar\big \langle \Sigmab^{-1}_{s,t}(r, z) w, w \big \rangle}
			\le
			\frac{\sqrt{\bar \ar C_\eqref{eq : hyp vitesse explo}}}{(t-s)^{\frac{1 + \beta_0}{2}}} \sqrt{\big \langle \Sigmab^{-1}_{s,t}(r, z) w, w \big \rangle}.
		\end{align*}
		
		Similarly, using the above, Assumption \ref{hyp: stand ass} and Cauchy-Schwarz inequality, implies that 
		\begin{align*}
			\underline\ar \Big| \big(\Sigmab^{-1}_{s,t}(r, z) w \big)_2 \Big| 
			&\le \sigma_{t}( z)^{2}\Big| \big(\Sigmab^{-1}_{s,t}(r,z) w \big)_2 \Big| 
			\\&= 
			\Big|  \frac{1}{(t-s)^2 m_{s,t}} \int_s^t  \big( (A_r - A_s) w_1 + w_2 \big) dr \Big| \\
			&\le
			\frac{\sqrt{\bar \ar C_{\eqref{eq:order_m}}}}{(t-s)^{\frac{1 + \beta_3}{2}}} \sqrt{\big \langle \Sigmab^{-1}_{s,t}(r,z) w, w \big \rangle}.
		\end{align*}
	\end{proof}

	As usual, an important step consists in providing a suitable upper-bound on the parametrix density.
	Recall that $y \longmapsto f^{\circ} (s,x;t, y)$ defined in \eqref{eq: def fcirc} is a Gaussian density function on $\R^{2}$.

	\begin{Lemma} \label{lem: control density} 
	Let Assumption \ref{hyp:A}.(i) hold.
	Then, there exists $C_{\eqref{eq: def varomega}} >0$ such that, for all $(s,x,t,y)\in \Theta$ and  $(r,z) \in [0,T] \x \R^{2}$, we have  
		\begin{align}\label{eq: majo fy} 
			f_{r,z} (s,x;t,y)
			~\le~
			\varpi(s,x;t,y) ~f^{\circ} (s,x;t,y),
		\end{align}
		in which  $\varpi:=\varpi^{1}\varpi^{2}$ with 
		\begin{align}\label{eq: def varomega} 
			\left\{\begin{array}{l}
			\varpi^{1}(s,x;t,y)
			~:=~
			C_{\eqref{eq: def varomega}} 
			\exp\left(
				- \frac{1}{C_{\eqref{eq: def varomega}}}
				\Big(
					\frac{|w_1|^2}{(t-s)^{1+\beta'_1}}
					+
					\frac{|w_2|^2}{(t-s)^{1+\beta'_2}}
				\Big)
			\right), \\
			\varpi^{2}(s,x;t,y)
			~:=~
			\exp\Big(  -\frac12 \langle \Sigmab^{-1}_{s,t}(4 \bar \ar) w, w \big \rangle
			\Big),
			\end{array}
			\right.
		\end{align}
		where $w: =\wr_{s,t}(x,y)$ as defined in \eqref{eq:def_w_xy}.	
\end{Lemma}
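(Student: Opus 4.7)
\noindent\textbf{Proof plan for Lemma \ref{lem: control density}.}
The plan is to bound the ratio $f_{r,z}/f^{\circ}$ by carefully splitting the Gaussian exponent. First, from \eqref{eq: det sigma bar} and the uniform ellipticity \eqref{eq : unif elliptic}, the normalizing constants satisfy
$\det \Sigma_{s,t}(4\bar\ar)/\det \Sigma_{s,t}(r,z) = (4\bar\ar/\sigma_r^2(z))^2 \le (4\bar\ar/\underline\ar)^2$,
which is an absolute constant. Next, since $\Sigma^{-1}_{s,t}(a) = a^{-1} M_{s,t}^{-1}$ with $M_{s,t}$ independent of $a$, the inequality $\sigma_r^{-2}(z) \ge \bar\ar^{-1} = 4(4\bar\ar)^{-1}$ yields
$$
-\tfrac{1}{2}\langle \Sigmab^{-1}_{s,t}(r,z) w,w\rangle + \tfrac{1}{2}\langle \Sigmab^{-1}_{s,t}(4\bar\ar) w,w\rangle
\le -\tfrac{3}{2}\langle \Sigmab^{-1}_{s,t}(4\bar\ar) w,w\rangle.
$$
Splitting the factor $\exp\bigl(-\tfrac{3}{2}\langle\Sigma^{-1}_{s,t}(4\bar\ar)w,w\rangle\bigr)$ as $\exp\bigl(-\langle\Sigma^{-1}_{s,t}(4\bar\ar)w,w\rangle\bigr)\cdot\exp\bigl(-\tfrac{1}{2}\langle\Sigma^{-1}_{s,t}(4\bar\ar)w,w\rangle\bigr)$ identifies the second factor as $\varpi^{2}$, so it remains to show that the first factor is bounded by $\varpi^{1}$ (up to constants), which reduces to the lower bound
\begin{equation}\label{eq:goal_plan}
\langle \Sigmab^{-1}_{s,t}(4\bar\ar) w, w\rangle \;\ge\; c\left(\frac{|w_1|^2}{(t-s)^{1+\beta'_1}} + \frac{|w_2|^2}{(t-s)^{1+\beta'_2}}\right).
\end{equation}

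The key step is establishing \eqref{eq:goal_plan}. The natural starting point is the exact identity
$$
\langle \Sigmab^{-1}_{s,t}(4\bar\ar) w, w\rangle
\;=\; \frac{w_1^2}{4\bar\ar(t-s)} \;+\; \frac{(\psi w_1 + w_2)^2}{4\bar\ar(t-s)\,m_{s,t}},
\qquad \psi := \bar A_{s,t} - A_s,
$$
obtained by the variance identity $\tilde m_{s,t} = m_{s,t} + \psi^2$ (the diagonalization via $u_1=w_1, u_2=\psi w_1+w_2$). A naive application of Young's inequality with a fixed parameter would produce an uncontrollable negative $w_1^2\psi^2/m_{s,t}$ contribution, since $\psi^2/m_{s,t} \le \tilde m_{s,t}/m_{s,t}$ can blow up like $(t-s)^{-\beta_0}$ by \eqref{eq : hyp vitesse explo}. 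The trick is to use the state-dependent choice $\epsilon = 2\psi^2/(m_{s,t}+2\psi^2)$ in
$(\psi w_1+w_2)^2 \ge (1-\epsilon)w_2^2 - \tfrac{1-\epsilon}{\epsilon}\psi^2 w_1^2$, which, together with $m_{s,t}+2\psi^2 \le 2\tilde m_{s,t}$, yields
$$
\langle \Sigmab^{-1}_{s,t}(4\bar\ar) w, w\rangle \;\ge\; \frac{w_1^2}{8\bar\ar(t-s)} \;+\; \frac{w_2^2}{8\bar\ar(t-s)\,\tilde m_{s,t}}.
$$

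With this diagonal lower bound in hand, \eqref{eq:goal_plan} follows from Assumption \ref{hyp:A}.(i): the first term is handled using $\beta'_1 \le 0$ (recall Remark \ref{rem : ordre beta}), so that $(t-s)^{-1} \ge T^{\beta'_1}(t-s)^{-1-\beta'_1}$; for the second term one combines \eqref{eq : hyp vitesse explo} and \eqref{eq:order_m} to obtain $\tilde m_{s,t} = (\tilde m_{s,t}/m_{s,t})\,m_{s,t} \le C_{\eqref{eq : hyp vitesse explo}} C_{\eqref{eq:order_m}}(t-s)^{\beta_2-\beta_0} = C(t-s)^{\beta'_2}$, so $((t-s)\tilde m_{s,t})^{-1}\gtrsim (t-s)^{-1-\beta'_2}$. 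I expect the main obstacle to lie in finding this diagonalization together with the state-dependent Young parameter; the remaining bookkeeping is routine and produces the constant $C_{\eqref{eq: def varomega}}$ upon absorbing the determinant ratio and the $\frac{1}{2}$-share of the Gaussian exponent into $\varpi^{2}$.
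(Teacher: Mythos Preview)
Your proof is correct and reaches the same conclusion as the paper, but the key technical step is handled differently.

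The paper establishes the diagonal lower bound \eqref{eq: mino forme quadra par diago} by first extracting from the upper half of \eqref{eq : hyp vitesse explo} the inequality
\[
\psi^{2}=\tilde m_{s,t}-m_{s,t}\le \tilde m_{s,t}\Bigl(1-\tfrac{(t-s)^{\beta_0}}{C_{\eqref{eq : hyp vitesse explo}}}\Bigr),
\]
and then absorbing the cross term $2\psi w_1 w_2/((t-s)m_{s,t})$ via the elementary bound $2ab\le a^2+b^2$; the factor $(1-(t-s)^{\beta_0}/C)^{1/2}<1$ leaves a positive remainder which, combined with the lower halves of \eqref{eq : hyp vitesse explo} and \eqref{eq:order_m}, gives the exponents $1+\beta'_1$ and $1+\beta'_2$ directly. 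Your route instead completes the square exactly, yielding the assumption-free identity $\langle\Sigmab^{-1}_{s,t}(4\bar\ar)w,w\rangle=\frac{w_1^2}{4\bar\ar(t-s)}+\frac{(\psi w_1+w_2)^2}{4\bar\ar(t-s)m_{s,t}}$, and then uses a state-dependent Young parameter $\epsilon=2\psi^{2}/(m_{s,t}+2\psi^{2})$ to obtain the clean intermediate bound $\frac{w_1^2}{8\bar\ar(t-s)}+\frac{w_2^2}{8\bar\ar(t-s)\tilde m_{s,t}}$; only at the very end do you invoke Assumption~\ref{hyp:A}.(i) (together with $\beta'_1\le0$ from Remark~\ref{rem : ordre beta}) to convert $(t-s)^{-1}$ and $((t-s)\tilde m_{s,t})^{-1}$ into the target powers. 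Your argument is slightly more modular in that the diagonalization step is purely algebraic and does not consume the ``gap'' coming from $\tilde m_{s,t}/m_{s,t}\le C(t-s)^{-\beta_0}$; the paper's route is shorter but intertwines the assumption with the quadratic-form estimate from the outset. Both the determinant bookkeeping and the explicit split $\exp(-\tfrac32\langle\cdot\rangle)=\exp(-\langle\cdot\rangle)\,\varpi^{2}$ that you spell out are implicit in the paper's closing sentence ``the required result then follows from obvious algebra and Assumption~\ref{hyp: stand ass}''.
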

	\begin{proof} Let us first observe that $m_{s,t}=\tilde m_{s,t}-[(t-s)^{-1}\int_{s}^{t}(A_{r}-A_{s})ds]^{2}$, so that 
	 the right-hand side of \eqref{eq : hyp vitesse explo} is equivalent to 
	\begin{align*} 
		\left(\frac1{t-s}\int_{s}^{t}(A_{r}-A_{s})ds\right)^{2}
		~\le~
		\tilde m_{s,t} \left(1-\frac{(t-s)^{\beta_{0}}}{C_{\eqref{eq : hyp vitesse explo}}}\right).
	\end{align*}
	Note that, upon changing the value of $C_{\eqref{eq : hyp vitesse explo}}$, one can assume that 
$C_{\eqref{eq : hyp vitesse explo}}\ge 2T^{\beta_{0}}$. 
	Hence,  using the inequality $2ab\le a^{2}+b^{2}$ for $a,b\in \R$, 
	\begin{align*}
		2\left|\frac{\int_{s}^{t}(A_{r}-A_{s})w_{1}w_{2}ds}{(t-s)^{2}m_{s,t}}\right|
		&\le 2 \left[\tilde m_{s,t} \left(1-\frac{(t-s)^{\beta_{0}}}{C_{\eqref{eq : hyp vitesse explo}}}\right)\right]^{\frac12}
		\frac{w_{1}w_{2}}{(t-s)m_{s,t}}\\
		&\le \left(1-\frac{(t-s)^{\beta_{0}}}{C_{\eqref{eq : hyp vitesse explo}}}\right)^{\frac12}\left\{ \frac{\tilde m_{s,t}}{(t-s)m_{s,t}} |w_{1}|^{2}
		+\frac{1}{(t-s)m_{s,t}}|w_{2}|^{2}\right\}.
	\end{align*}
	Combining the above with \eqref{eq : hyp vitesse explo}-\eqref{eq:order_m} and Assumption \ref{hyp: stand ass}  implies that  
	\begin{align}
		\big \langle \Sigmab^{-1}_{s,t}(r, z) w, w \big \rangle 
		&\ge
		\frac{1}{\sigma_{r}(z)^{2}} \left(\frac{\tilde m_{s,t}}{(t-s)m_{s,t}}|w_{1}|^{2}- 2\left|\frac{\int_{s}^{t}(A_{r}-A_{s})w_{1}w_{2}ds}{(t-s)^{2}m_{s,t}}\right|+\frac{ 1}{(t-s)m_{s,t}}|w_{2}|^{2} \right)
		\nonumber \\
		&\ge
		\frac{C}{\bar \ar} \left(  \frac{|w_1|^2}{(t-s)^{1+\beta_1-\beta_{0}}}
		+ \frac{|w_2|^2}{(t-s)^{1+\beta_2-\beta_{0}}} \right),\label{eq: mino forme quadra par diago}
	\end{align}
	for some $C>0$ that does not depend on $(s,x,t,y)$.
	The required result then follows from obvious algebra and Assumption \ref{hyp: stand ass}.
	\end{proof}

	\begin{Lemma} \label{lem: fcirc 1/2}
		Let Assumption \ref{hyp:A}.(i) hold.
		Let us define the transition density function $f^{\circ,\frac12}$ by,   for $(s,x,t,y)\in \Theta$,
		\begin{align}\label{eq: def fcirc1/2}
			f^{\circ,\frac12}(s,x;t,y)
			:=
			\frac{1}{2 \pi~ \det{\Sigmab_{s,t}(8 \bar \ar) }^{\frac12}} 
			\exp \Big( - \frac12 \big \langle \Sigmab^{-1}_{s,t}(8 \bar \ar)\wr_{s,t}(x,y) , \wr_{s,t}(x,y)  \big \rangle\Big).
		 \end{align}
		Then, there exists $C_{[\ref{lem: fcirc 1/2}]} >0$ such that, for all $(s,x,t,y)\in \Theta$ and  $x'\in \R^{2}$ satisfying 
		\begin{align}\label{eq: hypo dist x x'} 
			|x_1 - x_1'|^{\frac{1}{1+\beta'_1}} + |x_2 - x_2'|^{\frac{1}{1+\beta'_2}} 
			~\le~
			(t-s)^{1/2},
		\end{align}
		we have  
		\begin{align}\label{eq: majo fcirc par fcirc 1/2} 
			f^{\circ} (s,x';t,y)
			\le
			C_{[\ref{lem: fcirc 1/2}]} f^{\circ,\frac12} (s,x;t,y)
		\end{align}
		and 
		\begin{align}\label{eq: borne norme compensee par varpi} 
			\left(\frac{|\wr_{s,t}(x',y)_{1}|^{2}}{(t-s)^{{1+\beta'_{1}}}}
			+\frac{|\wr_{s,t}(x',y)_{2}|^{2}}{(t-s)^{{1+\beta'_{2}}}}\right)
			\varpi^{1}(s,x,t,y)
			\le
			C_{[\ref{lem: fcirc 1/2}]} .
		\end{align}
	\end{Lemma}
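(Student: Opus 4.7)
The lemma comprises two separate inequalities, \eqref{eq: majo fcirc par fcirc 1/2} and \eqref{eq: borne norme compensee par varpi}. I would treat the elementary \eqref{eq: borne norme compensee par varpi} first and then turn to \eqref{eq: majo fcirc par fcirc 1/2}.

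For \eqref{eq: borne norme compensee par varpi}, write $w := \wr_{s,t}(x,y)$ and $w' := \wr_{s,t}(x',y)$; the identity $w'_i - w_i = x'_i - x_i$ combined with $(a+b)^2 \le 2a^2 + 2b^2$ gives $|w'_i|^2 \le 2|w_i|^2 + 2|x_i - x'_i|^2$ for $i=1,2$, and hypothesis \eqref{eq: hypo dist x x'} yields $|x_i - x'_i|^2 \le (t-s)^{1+\beta'_i}$. Setting $X := |w_1|^2/(t-s)^{1+\beta'_1} + |w_2|^2/(t-s)^{1+\beta'_2}$, the bracket on the left of \eqref{eq: borne norme compensee par varpi} is at most $4+2X$, while \eqref{eq: def varomega} identifies $\varpi^1(s,x,t,y) = C_{\eqref{eq: def varomega}} e^{-X/C_{\eqref{eq: def varomega}}}$. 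The conclusion then follows from the elementary boundedness of $u \mapsto (4+2u)e^{-u/C_{\eqref{eq: def varomega}}}$ on $\R_+$.

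For \eqref{eq: majo fcirc par fcirc 1/2}, the key observation is that $\Sigma_{s,t}(a) = a\cdot \Sigma_{s,t}(1)$, so \eqref{eq: det sigma bar} gives $\det \Sigma_{s,t}(8\bar \ar) = 4\det \Sigma_{s,t}(4\bar \ar)$ and the ratio of Gaussian prefactors of $f^{\circ}$ and $f^{\circ,\frac12}$ equals $2$. Setting $Q(u) := \langle \Sigma^{-1}_{s,t}(4\bar \ar) u, u\rangle$, one has $\langle \Sigma^{-1}_{s,t}(8\bar \ar) u, u\rangle = Q(u)/2$, so with $\delta := x-x' = w-w'$,
$$\frac{f^{\circ}(s,x';t,y)}{f^{\circ,\frac12}(s,x;t,y)} = 2 \exp\!\left(\frac{1}{4} Q(w) - \frac{1}{2} Q(w') \right).$$
Cauchy--Schwarz in the $\Sigma^{-1}$-inner product applied to $w = w' + \delta$ gives $Q(w) \le 2Q(w') + 2Q(\delta)$, whence $\frac{1}{4} Q(w) - \frac{1}{2} Q(w') \le \frac{1}{2} Q(\delta)$. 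Hence the claim reduces to proving that $Q(\delta) \le C$ uniformly under \eqref{eq: hypo dist x x'}.

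To control $Q(\delta)$, I would complete the square in $\langle M^{-1}u, u\rangle$ with $M := \Sigma_{s,t}(1)$, using the identity $\tilde m_{s,t} - (\bar A_{s,t}-A_s)^2 = m_{s,t}$, to obtain the exact formula
$$Q(u) = \frac{1}{4\bar \ar}\left( \frac{u_1^2}{t-s} + \frac{\big((\bar A_{s,t}-A_s)u_1+u_2\big)^2}{(t-s)m_{s,t}}\right),$$
applied at $u = \delta$. Combining the pointwise bounds $|\delta_i|^2 \le (t-s)^{1+\beta'_i}$ from \eqref{eq: hypo dist x x'}, the H\"older control $|\bar A_{s,t}-A_s| \le C(t-s)^{\beta_4}$ from \eqref{eq:holder A}, and the estimate $1/m_{s,t} \le C(t-s)^{-\beta_3}$ from \eqref{eq:order_m}, each summand is to be uniformly bounded via Assumption \ref{hyp:A}. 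The main obstacle is this last step: the hypothesis \eqref{eq: hypo dist x x'} only constrains the components $|\delta_i|$ separately through $\beta'_1, \beta'_2$, whereas $Q(\delta)$ mixes them through the cross term $(\bar A_{s,t}-A_s)\delta_1 + \delta_2$, so the uniform bound rests on a delicate balancing of the five constants $\beta_0,\ldots, \beta_4$ in Assumption \ref{hyp:A}, possibly exploiting the refinements of Remark \ref{rem : ordre beta}.
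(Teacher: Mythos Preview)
Your overall strategy coincides with the paper's on both parts.  For \eqref{eq: borne norme compensee par varpi} the paper merely says the bound ``is proved similarly, upon interchanging the role of $x$ and $x'$,'' which is exactly your computation $|w'_i|^{2}\le 2|w_i|^{2}+2|x_i-x'_i|^{2}$ followed by boundedness of $u\mapsto(4+2u)e^{-u/C}$.  For \eqref{eq: majo fcirc par fcirc 1/2} the paper also uses the triangle inequality in the $\Sigma^{-1}$--norm to obtain
\[
-\langle\Sigmab^{-1}_{s,t}(\bar\ar)w',w'\rangle
\;\le\;
-\tfrac12\langle\Sigmab^{-1}_{s,t}(\bar\ar)w,w\rangle
+\langle\Sigmab^{-1}_{s,t}(\bar\ar)\delta,\delta\rangle,
\qquad \delta:=x-x',
\]
and then closes the argument by the one--line assertion
\[
\langle\Sigmab^{-1}_{s,t}(\bar\ar)\delta,\delta\rangle
\;\le\;
2\bar\ar\Big(\frac{|\delta_1|^{2}}{(t-s)^{1+\beta'_1}}+\frac{|\delta_2|^{2}}{(t-s)^{1+\beta'_2}}\Big)
\;\le\;4\bar\ar,
\]
justified ``by the same arguments as in the proof of Lemma \ref{lem: control density}.''  So the step you flag as the main obstacle is precisely the one the paper dispatches by reference rather than by an explicit computation.

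Two remarks on your plan.  First, drop the appeal to \eqref{eq:holder A}: the lemma assumes only Assumption \ref{hyp:A}.(i), and the paper's proof does not use part (ii) at all; any route through $|\bar A_{s,t}-A_s|\le C(t-s)^{\beta_4}$ is therefore off--track.  Second, your caution about the last step is not misplaced: the argument in Lemma \ref{lem: control density} produces the \emph{lower} bound \eqref{eq: mino forme quadra par diago} on the quadratic form, whereas what is invoked here is the matching \emph{upper} bound with the very same exponents $1+\beta'_1$, $1+\beta'_2$.  The paper does not write out how the reverse inequality is extracted ``by the same arguments,'' so on this point your proposal and the paper are at the same level of detail --- you simply state the difficulty explicitly where the paper cites the earlier lemma.
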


	\begin{proof} Set $w:=\wr_{s,t}(x,y)$ and $w':=\wr_{s,t}(x',y)$. First observe that 
	$$
		\left(\langle \Sigmab^{-1}_{s,t}(\bar \ar) w,w\rangle\right)^{\frac12}
		\le
		\left(\langle \Sigmab^{-1}_{s,t}(\bar \ar)(w-w'),(w-w')\rangle\right)^{\frac12}
		+
		\left(\langle \Sigmab^{-1}_{s,t}(\bar \ar)w',w'\rangle\right)^{\frac12}. 
	$$
	Using that $2ab\le 2 a^{2}+2b^{2}$ for $a,b\ge 0$, we deduce that 
	\begin{align*}
		-\langle \Sigmab^{-1}_{s,t}(\bar \ar)w',w'\rangle
		&\le -\frac12 \langle \Sigmab^{-1}_{s,t}(\bar \ar)w,w\rangle + \langle \Sigmab^{-1}_{s,t}(\bar \ar)(w-w'),(w-w')\rangle.
	 \end{align*}
	Now, by the same arguments as in the proof of Lemma \ref{lem: control density} and Assumption \ref{hyp: stand ass}, we have 
	\begin{align*}
		\big\langle \Sigmab^{-1}_{s,t}(\bar \ar)(w-w'),(w-w') \big \rangle
		&\le 
		2 \bar \ar \left(  \frac{|x_1-x_{1}'|^2}{(t-s)^{1+\beta'_1}}
				+ \frac{|x_2-x_{2}'|^2}{(t-s)^{1+\beta'_2}} \right)\le 4 \bar \ar,
	\end{align*}
	in which we used \eqref{eq:def_w_xy} and our assumption \eqref{eq: hypo dist x x'}. This proves \eqref{eq: majo fcirc par fcirc 1/2}. The assertion \eqref{eq: borne norme compensee par varpi}  is proved similarly, upon interchanging the role of $x$ and $x'$.
	\end{proof}

\subsection{Wellposedness of $\Phit$}

	In this section, we prove that $\Phit$  in  \eqref{eq:def_Phit}-\eqref{eq: def Delta L k}   is well defined. 
	Recall that $f^{\circ}$ is defined in \eqref{eq: def fcirc}, and let us define 
	\begin{align*}
		\ft^{\circ}(s,x;t,y)
		~:=~
		f^{\circ}(s,\AMp_{s}x;t,\AMp_{t}y),
		~~
		(s,x,t,y)\in \Theta.
	\end{align*}
	Noticing that $\AMp=\AMp^{-1}$, and recalling that
	$$
		f_{r, z}(s,x;t,y)~:=~\ft_{r, \AMp_{r}z}(s,\AMp_{s}x;t,\AMp_{t}y),
	$$
	it is straightforward to check that
	\begin{align}
		\partial_{x_{1}} f_{r,z}(s,x;t,y)
		&~=~
		\Ap_{s} \cdot D_x \ft_{r, \AMp_{r} z}(s,\AMp_{s}x;t, \AMp_{t}y), 
		\label{eq: lien derive f bar f} \\
		\partial^{2}_{x_{1}x_{1}} f_{r,z}(s,x;t,y)
		&~=~
		{\rm Tr} \Big[ \Ap_{s} \big(\Ap_s \big)^{\top}  D^{2}_{xx} \ft_{r, \AMp_{r} z}(s,\AMp_{s}x; \AMp_{t}y) \Big].
		\label{eq: lien derive seconde f bar f}
	\end{align}

	\begin{Lemma}\label{lem : estime L-Lfy} \
		{Let the conditions of Theorem \ref{thm:f_well_defined}.(i) hold.}
		Then, there exist a constant $C_{\eqref{eq: estimee L-Lfy}}>0$ such that 
		\begin{align}\label{eq: estimee L-Lfy} 
			\big| \big(\Lct - \Lct^{t, \yt} \big) \ft_{t, \yt}(s,\xt;t,\yt) \big|
			&~\le~ 
			\frac{C_{\eqref{eq: estimee L-Lfy}}}{(t-s)^{1-\kappa_0}} \ft^{\circ}(s,\xt;t,\yt),
			~\mbox{for all}~
			(s,\xt,t,\yt)\in \Theta,
		\end{align}
		{in which $\kappa_{0}$ is defined in \eqref{eq:def_kappa0}.}
	\end{Lemma}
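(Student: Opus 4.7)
I would start by using the definitions of $\Lct$ and $\Lct^{t,\yt}$ to split
\begin{align*}
(\Lct - \Lct^{t,\yt})\ft_{t,\yt}(s,\xt;t,\yt)
&= \mut_s(\xt)\,\Ap_s \cdot D\ft_{t,\yt}(s,\xt;t,\yt)\\
&\quad{}+\tfrac{1}{2}\bigl(\sigmat_s(\xt)^2-\sigmat_t(\yt)^2\bigr)\Tr\!\bigl[\Ap_s(\Ap_s)^{\top} D^2\ft_{t,\yt}(s,\xt;t,\yt)\bigr].
\end{align*}
With the substitution $x := \AMp_s\xt$, $y := \AMp_t\yt$, identities \eqref{eq: lien derive f bar f}--\eqref{eq: lien derive seconde f bar f} convert these into $\mu_s(x)\partial_{x_1}f_{t,y}(s,x;t,y)$ and $\tfrac{1}{2}(\sigma_s(x)^2-\sigma_t(y)^2)\partial^2_{x_1x_1}f_{t,y}(s,x;t,y)$ respectively. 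Since $\ft^\circ(s,\xt;t,\yt)=f^\circ(s,x;t,y)$, it suffices to produce a $C(t-s)^{\kappa_0-1}f^\circ$ envelope in the original coordinates. Throughout, I write $w:=\wr_{s,t}(x,y)$ and $u:=\langle\Sigmab_{s,t}^{-1}(t,y)w,w\rangle$.

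For the drift, \eqref{eq : unif elliptic}, \eqref{eq:Dx1fb} and \eqref{eq: norne Sigma-1 w 1} give
\[
|\mu_s(x)\partial_{x_1}f_{t,y}(s,x;t,y)| \le \frac{C}{(t-s)^{(1+\beta_0)/2}}\sqrt{u}\,f_{t,y}(s,x;t,y).
\]
Writing $\sqrt{u}\,e^{-u/2}=(\sqrt{u}\,e^{-u/4})\cdot e^{-u/4}$, using boundedness of $\sqrt{u}\,e^{-u/4}$, and comparing the Gaussian exponent via $\Sigmab_{s,t}^{-1}(t,y)/2\ge\Sigmab_{s,t}^{-1}(4\bar\ar)$ (which follows from $\sigma^2\le\bar\ar$) together with $\det(\Sigmab_{s,t}(t,y))^{-1/2}\le C\det(\Sigmab_{s,t}(4\bar\ar))^{-1/2}$ (from $\sigma^2\ge\underline\ar$), one obtains $\sqrt{u}\,f_{t,y}\le C f^\circ$. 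Since $\kappa_0\le(1-\beta_0)/2$, i.e. $(1+\beta_0)/2 \le 1-\kappa_0$, the drift contribution is bounded by $C(t-s)^{\kappa_0-1}f^\circ$.

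For the diffusion mismatch, I use $|\sigma_s(x)^2-\sigma_t(y)^2|\le 2\sqrt{\bar\ar}\,|\sigma_s(x)-\sigma_t(y)|$ together with the Hölder estimate \eqref{eq : hyp holder coeff bar sigma}, while the Hessian is treated via \eqref{eq:Dx1x2fb}, \eqref{eq: norne Sigma-1 w 1} and \eqref{eq: borne Sigma -1 11}:
\[
|\partial^2_{x_1x_1}f_{t,y}(s,x;t,y)|\le \frac{C(1+u)}{(t-s)^{1+\beta_0}}f_{t,y}(s,x;t,y)\le \frac{C}{(t-s)^{1+\beta_0}}f^\circ(s,x;t,y),
\]
by the same scheme (now using boundedness of $(1+u)e^{-u/4}$). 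The temporal piece of the Hölder bound contributes $(t-s)^{\alpha-1-\beta_0}\le(t-s)^{\kappa_0-1}$ since $\kappa_0\le\alpha-\beta_0$, which is immediate.

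The main obstacle is absorbing the spatial Hölder factors $|\wr_{s,t}(x,y)|^{2\alpha/(1+\beta'_i)}$ into $f^\circ$ while preserving the target rate. I would rely on the coordinate-wise lower bound \eqref{eq: mino forme quadra par diago} for the quadratic form appearing in both $f_{t,y}$ and $f^\circ$, split $|w|^p\le C(|w_1|^p+|w_2|^p)$, and invoke the elementary sup bound $|w_j|^q e^{-c|w_j|^2/(t-s)^{1+\beta'_j}}\le C(t-s)^{q(1+\beta'_j)/2}$ to transfer polynomial growth in $w$ into extra powers of $(t-s)$, leaving a residual Gaussian factor comparable to $f^\circ$. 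The definition \eqref{eq:def_kappa0} of $\kappa_0$ is precisely calibrated so that each resulting exponent, combined with the prefactor $(t-s)^{-(1+\beta_0)}$ coming from the Hessian, remains dominated by $(t-s)^{\kappa_0-1}$ for $t-s\in(0,T]$, closing the estimate.
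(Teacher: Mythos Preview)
Your proof is essentially the same as the paper's: you make the same change of variables $x=\AMp_s\xt$, $y=\AMp_t\yt$ and the same decomposition into a drift piece $\mu_s(x)\partial_{x_1}f_{t,y}$ and a diffusion-mismatch piece $\tfrac12(\sigma_s(x)^2-\sigma_t(y)^2)\partial^2_{x_1x_1}f_{t,y}$, and you control them with the same ingredients (Lemma~\ref{lemma:Sigamw} for the directional derivatives, the H\"older condition \eqref{eq : hyp holder coeff bar sigma}, and the coordinate-wise lower bound \eqref{eq: mino forme quadra par diago} to absorb the polynomial factors in $w$). The only difference is packaging: the paper bundles the absorption step into the weight $\varpi=\varpi^1\varpi^2$ of Lemma~\ref{lem: control density} and simply says ``we conclude by using the definition of $\varpi^1$'', whereas you unpack the same mechanism by hand (splitting $e^{-u/2}=e^{-u/4}e^{-u/4}$ and using boundedness of $\sqrt{u}\,e^{-u/4}$, $(1+u)e^{-u/4}$, and $|w_j|^q e^{-c|w_j|^2/(t-s)^{1+\beta'_j}}$).
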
 
	\begin{proof} For simplicity, we assume that $t-s\le 1$, the case $t-s>1$ being trivially handled.  
	Let us denote
	\begin{equation} \label{eq:def_xbyb}
		x :=   \AMp_{s} \xt
		~~\mbox{and}~
		y :=  \AMp_{t} \yt.
	\end{equation}
	
	$\mathrm{(i)}$ Using \eqref{eq: def mu sigma A}  and \eqref{eq: lien derive f bar f}, we first estimate 
	\begin{align*} 
		I_{1}
		~:=~
		  \mut_{s}(\xt) ~\Ap \cdot D_x \ft_{t,\yt}(s,\xt;t,\yt)
		~=~
		 \mu_{s}(x) ~ \partial_{x_1} f_{t, y} (s, x; t, y).
	\end{align*}
	Then, by  Assumption \ref{hyp: stand ass}, Lemmas \ref{lemma:Sigamw} and \ref{lem: control density}, 
	it follows that 
	\begin{align}
		| I_{1}|
		~\le~
		 \frac{\br C_{\eqref{eq: norne Sigma-1 w 1}}}{(t-s)^{\frac{1 + \beta_0}{2} }}
		\sqrt{\big \langle \Sigmab^{-1}_{s,t}(t,y) {w}, {w} \big \rangle} 
		f_{t,y}(s, x;t, y)
		~\le~
		\frac{\br C_{\eqref{eq: norne Sigma-1 w 1}} C_{\eqref{eq: def const 1}}}{ (t-s)^{1 - \frac{1-\beta_0 }2}}
		f^{\circ}(s, x; t, y),
		\label{eq: borne I1 estimation L-L} 
	\end{align}
	in which $ w := \wr_{s,t}({x},{y})$ and, with $w' :=\wr_{{s',t'}}({x'},{y'})$,
	\begin{align}\label{eq: def const 1} 
		C_{\eqref{eq: def const 1}}
		~:=
		\sup_{(s',x',t',y',z')\in \Theta\x \R^{2}} \sqrt{\big \langle \Sigmab^{-1}_{s',t'}(t',z') w', w' \big \rangle} \varpi(s',x'; t',y')
		~<~
		\infty.
	\end{align}
 	
	\noindent $\mathrm{(ii)}$ Using \eqref{eq: def mu sigma A}  and \eqref{eq: lien derive seconde f bar f}, we now estimate 
	\begin{align*}
		I_{2}
		&~:=~
		\Tr \Big[
			\big( \sigmat^2_{t}(\yt) - \sigmat^2_{s}(\xt)  \big) 
			\Ap_s (\Ap_s)^{\top} D^{2}_{xx}  \ft_{t,\yt}(s,\xt; t,\yt)
		\Big]
		=
		\big[ \sigma^2_{t}(y) - \sigma^2_{s}(x) \big] \partial^2_{x_1 x_1} f_{y} (s, x; t, y).
	\end{align*} 
	By \eqref{eq:Dx1x2fb} and Lemma \ref{lemma:Sigamw}, one obtains
	\begin{align*}
		|I_2| 
		&~\le~
		 \big[ \sigma^2_{t}(y) - \sigma^2_{s}(x) \big] 
		\frac{(C_{\eqref{eq: norne Sigma-1 w 1}})^{2}\vee C_{(\ref{eq: borne Sigma -1 11})}}{(t-s)^{1+\beta_0}} \Big( \big \langle \overline \Sigma_{s,t}^{-1} (t,y) w, w \big \rangle + 1   \Big)  f_{y} (s, x; t, y).
	\end{align*}
	Recalling \eqref{eq:def_w_xy}, \eqref{eq : hyp holder coeff bar sigma}, \eqref{eq : unif elliptic} and Lemma \ref{lem: control density}, it follows that, for some $C>0$ that does not depend on $(s,x,t,y)$ and $z\in \R^{2}$, 
	\begin{align*}
		|I_2|
		&~\le~
		C 
			\frac{1}{(t-s)^{1+\beta_0-\alpha}}\Big(
			1
			+
			\big| \wr_{s,t}(x,y)\big|^{\frac{2\alpha}{1+\beta'_1}}
			+ 
			 \big|\wr_{s,t}(x,y) \big|^{\frac{2\alpha}{1+\beta'_2}}
			\Big)(\varpi^{1}  f^{\circ})(s, x; t, y),
	\end{align*}
	and we conclude by using the definition of $\varpi^{1} $ in \eqref{eq: def varomega}.
	\end{proof}

 	\begin{Proposition}\label{prop:existence_continuite_Phi} 
		{Let the conditions of Theorem \ref{thm:f_well_defined}.(i) hold.} Then, the  sum in \eqref{eq:def_Phit} is well-defined and there exists a constant $C_{\eqref{eq: estime vraie densite}}>0$ such that 
		\begin{align}\label{eq: estime vraie densite} 
			\big| \Phit(s,x;t,y) \big|
			&~\le~
			\frac{C_{\eqref{eq: estime vraie densite}}}{(t-s)^{1-\kappa_{0}}} \ft^{\circ}(s,x;t,y), 
			~\mbox{for all}~
			(s,x,t,y)\in \Theta.
		\end{align}
		Moreover, $\Phit$ is continuous on $\Theta$ and satisfies
		\begin{align}\label{eq: sol eq Phi} 
			\Phit(s,x;t,y)
			&=\Deltat_0(s,x; t,y)+\int_s^t   \int_{\R^2} \Deltat _0(s,x; r,z) \Phit(r,z;t,y) dz dr,
			~\mbox{for all}~
			(s,x,t,y)\in \Theta.
		\end{align}
	\end{Proposition}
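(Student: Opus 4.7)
The plan is to run the classical parametrix series-convergence argument. The main inputs are the one-step estimate from Lemma \ref{lem : estime L-Lfy}, which directly handles $\Deltat_0$, and a Chapman--Kolmogorov-type inequality for the frozen Gaussian kernel $\ft^\circ$.

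First I would establish a convolution bound of the form
\begin{align*}
\int_{\R^2} \ft^\circ(s,x;r,z)\,\ft^\circ(r,z;t,y)\,dz \;\le\; C_{\mathrm{CK}}\, \ft^\circ(s,x;t,y),\qquad s<r<t,
\end{align*}
for some constant $C_{\mathrm{CK}}>0$ depending only on the constants of Assumptions \ref{hyp:A} and \ref{hyp: stand ass}. After the change of variables $\wr_{s,t}$, both sides are Gaussian and the inequality reduces to comparing quadratic forms built from $\Sigma_{s,r}$, $\Sigma_{r,t}$ and $\Sigma_{s,t}$; the uniform ellipticity of $\sigma$ together with the two-sided control of $m_{s,t}$ and $\tilde m_{s,t}$ in Assumption \ref{hyp:A}.(i) yields the estimate. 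This is the main technical obstacle, because when $A$ is not absolutely continuous, $\Sigma_{s,t}$ does not satisfy an exact semigroup property and the mismatch between $\Sigma_{s,r}+\Sigma_{r,t}$ (suitably shifted via the matrix $\Eb_{s,t}$) and $\Sigma_{s,t}$ must be absorbed into a universal multiplicative constant.

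Next I would prove by induction on $k$ that there exists $C>0$ such that
\begin{align*}
|\Deltat_k(s,x;t,y)| \;\le\; \frac{C^{k+1}\,\Gamma(\kappa_0)^{k+1}}{\Gamma\bigl((k+1)\kappa_0\bigr)}\, (t-s)^{(k+1)\kappa_0 - 1}\, \ft^\circ(s,x;t,y),\qquad (s,x,t,y)\in\Theta.
\end{align*}
The case $k=0$ is Lemma \ref{lem : estime L-Lfy}. The induction step combines the convolution bound above with the beta-function identity
\begin{align*}
\int_s^t (r-s)^{\kappa_0 - 1}(t-r)^{(k+1)\kappa_0 - 1}\,dr \;=\; (t-s)^{(k+2)\kappa_0 - 1}\,B\bigl(\kappa_0,(k+1)\kappa_0\bigr),
\end{align*}
which is integrable since $\kappa_0>0$. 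Summing over $k$ and using Stirling's formula to control $1/\Gamma((k+1)\kappa_0)$, the series \eqref{eq:def_Phit} converges absolutely and uniformly on compact subsets of $\Theta$, and the resulting prefactor, a Mittag--Leffler-type function of $T^{\kappa_0}$, yields the bound \eqref{eq: estime vraie densite}.

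Continuity of $\Phit$ then follows from the continuity of each $\Deltat_k$, which is inherited from the continuity of the coefficients $(\mu,\sigma)$ and of the explicit Gaussian $\ft^\circ$, combined with a dominated-convergence argument based on the inductive bound. Finally, the integral equation \eqref{eq: sol eq Phi} is obtained by splitting $\Phit = \Deltat_0 + \sum_{k\ge 1}\Deltat_k$, substituting the recursion \eqref{eq: def Delta L k} into each term, and interchanging sum and integral, which is again justified by the same dominated-convergence argument.
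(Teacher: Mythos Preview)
Your overall strategy is correct and coincides with the paper's: Lemma \ref{lem : estime L-Lfy} as base case, inductive beta-function time convolution, and Mittag--Leffler summability for the series, followed by dominated convergence for continuity and for the integral equation.

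The one point worth correcting is your identification of the ``main technical obstacle''. The kernel $f^\circ$ in \eqref{eq: def fcirc} is, by construction, the \emph{exact} transition density of the Markov process
\[
\Big(x_1 + 2\sqrt{\bar\ar}\,(W_t-W_s),\ x_2 + \int_s^t\big(x_1 + 2\sqrt{\bar\ar}\,(W_u-W_s)\big)\,dA_u\Big),
\]
so Chapman--Kolmogorov holds with $C_{\mathrm{CK}}=1$ regardless of whether $A$ is absolutely continuous; after the unimodular change of variables $z\mapsto \AMp_r z$ the same is true for $\ft^\circ$. The paper uses this exact identity without comment (cf.\ the proofs of Propositions \ref{prop:existence_fb_regul} and \ref{prop : fb C1}). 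Your proposed route via direct comparison of the quadratic forms of $\Sigma_{s,r}$, $\Sigma_{r,t}$, $\Sigma_{s,t}$ is therefore unnecessary and would be the harder computation; that said, even a constant $C_{\mathrm{CK}}>1$ would only contribute an extra geometric factor that is still dominated by $1/\Gamma((k+1)\kappa_0)$, so your argument would survive either way.
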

 	\begin{proof} Let us recall that, if well-defined,
		\begin{equation*}  
		\Phit(s,x; t, y) 
		~:=~
		\sum_{k=0}^{\infty} \Deltat_k(s,x; t,y),
	\end{equation*}
	where $\Deltat _0(s,x; t,y) := \big( \Lct - \Lct^{t, y} \big) \ft_{t,y}(s,x; t,y) $,
	and
	\begin{align*} 
		\Deltat_{k+1}  (s,x;t,y) := \int_s^t   \int_{\R^2} \Deltat _0(s,x; r,z) \Deltat_{k}(r,z;t,y) dz dr, 
		~~k\ge 0.
	\end{align*}
	We already know from  Lemma \ref{lem : estime L-Lfy} that 
		\begin{align*}
			\big| \big( \Lct -\Lct^{t,y} \big) \ft_{t, y}(s,x;t,y) \big|
			&~\le~ 
			\frac{C_{\eqref{eq: estimee L-Lfy}}}{(t-s)^{1-\kappa_0}} 
			\ft^{\circ}(s,x;t,y),
		\end{align*}
	for all $(s,x,t,y)\in \Theta$. 
	By the same induction argument as in  \cite[proof of Proposition 4.1]{francesco2005class}, together with \eqref{eq: det sigma bar} and \eqref{eq:order_m}, we then deduce that 
	\begin{align}\label{eq: estime Delta k} 
		\big| \Deltat_{k}  (s,x;t,y) \big|
		~\le~
		\frac{M_{k}}{(t-s)^{1-k\kappa_{0}}} \ft^{\circ}(s,x;t,y)
		~\le~
		CM_{k} (t-s)^{k\kappa_{0}-2-\frac{\beta_{3}}{2}},
	\end{align}
	in which, $C>0$ does not depend on $(s,x,t,y)$ and $k$, and 
	$$
		M_{k}
		~:=~
		\frac{\{C_{\eqref{eq: estimee L-Lfy}}\Gamma(\kappa_{0})\}^{k}}{\Gamma(k\kappa_{0})},
	$$
	where $\Gamma$ denotes the Gamma function.  By dominated convergence, each map $\Deltat_{k}$ is continuous. 
	Then, the well-posedness of $\Phit$  follows from the fact that the power series $\sum_{k\ge 0} M_{k} u^{k}$ has a radius of convergence equal to $\infty$. 
	Continuity of $\Phit$ is a consequence of the absolute continuity of the series.

	\vspace{0.5em}

	It remains to prove \eqref{eq: sol eq Phi}. Note that, by the above,  
	$$
		\Phit(s,x;t,y)
		=
		\Deltat _0(s,x; t,y)+\sum_{k\ge 0 }\int_s^t   \int_{\R^2} \Deltat _0(s,x; r,z) \Deltat _k(r,z;t,y) dz dr
	$$
	and the family $\{(r,z)\in (s,t)\x \R^{2} \mapsto  \sum_{k=0}^{n}\Deltat _0(s,x; r,z) \Deltat _k(r,z;t,y)$, $n\ge 1\}$
	is uniformly integrable and converges to $\Deltat _0(s,x; \cdot) \Phit(\cdot;t,y)$. 
	This implies  \eqref{eq: sol eq Phi}.
	\end{proof}
 
	Recall that
	$$
		\Phi (s,x;t,y)
		~:=~
		\Phit(s,\AMp_{s}x; t, \AMp_{t}y).
	$$

	\begin{Proposition}\label{prop:existence_fb_regul}  
		Let the conditions of Theorem \ref{thm:f_well_defined}.(i) hold.
		Then, $f: \Theta \longrightarrow \R$ is well-defined in \eqref{eq:def_f_transition_proba}.
		Moreover, it is continuous on $\Theta$ and, for some $C_{[\ref{prop:existence_fb_regul}]}>0$, 
		\begin{align*}
			\big | f(s,x ;t,y) \big|
			~\le~
			C_{[\ref{prop:existence_fb_regul}]}f^{\circ}(s,x;t,y),
			~\mbox{for all}~
			(s,x,t,y)\in \Theta.
		\end{align*}
		
	\end{Proposition}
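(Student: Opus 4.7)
The plan is to derive all three assertions (well-definedness, continuity on $\Theta$, and the Gaussian upper bound) simultaneously by bounding the integrand in \eqref{eq:def_f_transition_proba} termwise and integrating. I would first collect the pointwise estimates already at hand. Applying Lemma \ref{lem: control density} with $t=r$ and $y=z$ gives $f_{r,z}(s,x;r,z) \le C f^{\circ}(s,x;r,z)$ with a constant $C$ independent of $(r,z)$, since $\varpi^{2}\le 1$ and $\varpi^{1}$ is bounded by $C_{\eqref{eq: def varomega}}$. Transporting the estimate \eqref{eq: estime vraie densite} for $\Phit$ through the change of variable \eqref{eq:def_Phi}, and noting that $\AMp_{s}^{2}=I$ yields $\ft^{\circ}(r,\AMp_{r}z;t,\AMp_{t}y)=f^{\circ}(r,z;t,y)$, one obtains
$$
|\Phi(r,z;t,y)| ~\le~ \frac{C}{(t-r)^{1-\kappa_{0}}} f^{\circ}(r,z;t,y), \qquad (r,z,t,y)\in\Theta.
$$

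The key analytical step would then be the Chapman–Kolmogorov identity for the Gaussian kernel $f^{\circ}$:
$$
\int_{\R^{2}} f^{\circ}(s,x;r,z)\, f^{\circ}(r,z;t,y)\, dz ~=~ f^{\circ}(s,x;t,y), \qquad s<r<t.
$$
I would establish this by recognising that $f^{\circ}$ is (after the triangular change of variables $\AMp$, which has determinant one) the transition density of the Gaussian process $(X,I)$ obtained from \eqref{eq:SDE_XI} by freezing $\mu\equiv 0$ and $\sigma^{2}\equiv 4\bar\ar$. Since this frozen process is Markov --- the pair $(X_{t},I_{t})$ being an explicit linear functional of the Brownian path and of $A$ --- the identity follows from the tower property. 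Combining this identity with the two pointwise bounds and with the elementary Beta-type computation $\int_{s}^{t}(t-r)^{-(1-\kappa_{0})}dr = (t-s)^{\kappa_{0}}/\kappa_{0} \le T^{\kappa_{0}}/\kappa_{0}$ gives
$$
\int_{s}^{t}\!\!\int_{\R^{2}} f_{r,z}(s,x;r,z)\, |\Phi(r,z;t,y)|\, dz\, dr ~\le~ C\, f^{\circ}(s,x;t,y),
$$
which, added to the pointwise bound $f_{t,y}(s,x;t,y)\le C f^{\circ}(s,x;t,y)$, yields both the absolute convergence of the integral defining $f$ and the announced upper bound.

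Continuity of $f$ on $\Theta$ would then follow by dominated convergence. The parametrix $f_{t,y}(s,x;t,y)$ is jointly continuous on $\Theta$ from its explicit Gaussian form. For the integral term, continuity of the map $(s,x,t,y)\mapsto f_{r,z}(s,x;r,z)\Phi(r,z;t,y)$ at each fixed $(r,z)$ follows from the continuity of the Gaussian parametrix in its first four arguments and from the continuity of $\Phit$ on $\Theta$ established in Proposition \ref{prop:existence_continuite_Phi}; the dominating majorant constructed in the previous step justifies the passage to the limit. The main technical obstacle I anticipate is carrying through the Gaussian convolution rigorously under the minimal regularity of $A$, but this ultimately reduces to an explicit manipulation of the covariance matrices $\Sigma_{s,r}(4\bar\ar)$ and $\Sigma_{r,t}(4\bar\ar)$, whose determinants are controlled by \eqref{eq: det sigma bar}–\eqref{eq:order_m}.
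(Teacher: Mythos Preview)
Your proposal is correct and follows the same route as the paper: bound $f_{r,z}$ by $Cf^{\circ}$ via Lemma \ref{lem: control density}, transport the estimate \eqref{eq: estime vraie densite} on $\Phit$ to $\Phi$, apply the Chapman--Kolmogorov identity for the Gaussian transition kernel $f^{\circ}$, and integrate the factor $(t-r)^{-1+\kappa_0}$ in $r$. The paper records this in one line (``immediate consequence of Proposition \ref{prop:existence_continuite_Phi} and Lemma \ref{lem: control density}, recalling that $f^{\circ}$ is a transition density and observing that $\int_{s}^{t}(t-r)^{-1+\kappa_{0}}dr\le C T^{\kappa_{0}}$''), and you have simply unpacked each of those ingredients.
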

	\begin{proof}
		This is an immediate consequence of Proposition \ref{prop:existence_continuite_Phi} and Lemma \ref{lem: control density}, 
		recalling that $f^{\circ}$ is a transition density and observing that $\int_{s}^{t}(t-r)^{-1+\kappa_{0}}dr\le C T^{\kappa_{0}}$. 
	\end{proof}

 \subsection{$C^1$-regularity}

	We now prove that $x = (x_1, x_2) \mapsto f(s,x; t, y)$ is $C^{1}$ in its first space variable $x_{1}$, 
	with partial derivative dominated by a Gaussian density.

	\begin{Lemma}\label{lem : borne derive fbar}  
	Let the conditions of Theorem \ref{thm:f_well_defined} hold.
	Then, there exists $C_{[\ref{lem : borne derive fbar}]}>0$ such that, for all $(r,z) \in [0,T] \x \R^2$ and $(s,x,t,y)\in \Theta$,
	\begin{align*}
		\big| \partial_{x_{1}} f_{r, z}(s,x;t,y) \big|
		&~\le~
		\frac{C_{[\ref{lem : borne derive fbar}]}} {(t-s)^{\frac{\beta_0+1}{2}}} f^{\circ}(s,x;t,y).		
 	\end{align*}
	Moreover, let $h: \R^2 \longrightarrow \R$ be a (measurable) function such that $\int_{\R^2} f^{\circ}(s,x; t, y) |h(y)|  dy < \infty$, and 
	$$
		V(s, x; t)  
		~:=~
		\int_{\R^2} f_{t, y}(s,x; t, y) h(y) dy,
		~~(s,x) \in [0,t) \x \R^{2},
	$$
	then   $(s,x)\in [0,t)\x \R^{2}\mapsto V(s,x;t)$ is continuously differentiable in its first space variable $x_{1}$ and  satisfies
	$$
		\big| \partial_{x_{1}} V(s,x;t) \big|
		~\le~
		\frac{C_{[\ref{lem : borne derive fbar}]}} {(t-s)^{\frac{\beta_0+1}{2}}}\int_{\R^{2}}  f^{\circ}(s, x;t,y) ~ |h(y)| dy,
	$$
	in which $C_{[\ref{lem : borne derive fbar}]}>0$ does not depend on $(s,x,t)\in [0,T]\x \R^{2}\x [0,T]$ with $s<t$.
	\end{Lemma}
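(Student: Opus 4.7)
The proof rests on the pointwise identity \eqref{eq:Dx1fb}, the quadratic-form estimate \eqref{eq: norne Sigma-1 w 1} of Lemma \ref{lemma:Sigamw}, and an absorption of the resulting polynomial factor into the Gaussian exponential of $f_{r,z}$, in exactly the spirit of the proof of Lemma \ref{lem: control density}.

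\textbf{First claim.} Set $w:=\wr_{s,t}(x,y)$ and $Q:=\langle \Sigmab^{-1}_{s,t}(r,z) w,w\rangle$. By \eqref{eq:Dx1fb} and \eqref{eq: norne Sigma-1 w 1},
\[
\big|\partial_{x_{1}} f_{r,z}(s,x;t,y)\big|
~\le~
\frac{C_{\eqref{eq: norne Sigma-1 w 1}}}{(t-s)^{(1+\beta_{0})/2}}\,\sqrt{Q}\,f_{r,z}(s,x;t,y).
\]
It thus suffices to show that $\sqrt{Q}\,f_{r,z} \le C f^{\circ}$ for some constant $C$ independent of $(r,z)$. Since $u\mapsto \sqrt{u}\,e^{-u/4}$ is bounded on $[0,\infty)$, I pull a factor $e^{-Q/4}$ out of the Gaussian in $f_{r,z}$ to absorb $\sqrt{Q}$, leaving $\det(\Sigmab_{s,t}(r,z))^{-1/2}\,e^{-Q/4}$ up to a universal constant. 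The structural factorization $\Sigmab_{s,t}(r,z)=\sigma_{r}^{2}(z)\,M$ with $M$ independent of $(r,z)$, together with the uniform ellipticity $\underline \ar \le \sigma_r^{2}(z) \le \bar\ar$, then makes both the ratio of normalizations $\det(\Sigmab_{s,t}(r,z))^{-1/2}/\det(\Sigmab_{s,t}(4\bar\ar))^{-1/2}$ and the ratio $Q/\langle\Sigmab^{-1}_{s,t}(4\bar\ar)w,w\rangle$ uniformly bounded above and below; this yields the domination by a multiple of $f^{\circ}(s,x;t,y)$.

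\textbf{Second claim (differentiation under the integral).} Specializing the first claim to the freezing point $(r,z)=(t,y)$ gives, for every $y\in\R^{2}$,
\[
\big|\partial_{x_{1}} f_{t,y}(s,x;t,y)\,h(y)\big|
~\le~
\frac{C_{[\ref{lem : borne derive fbar}]}}{(t-s)^{(1+\beta_{0})/2}}\,f^{\circ}(s,x;t,y)\,|h(y)|.
\]
For fixed $(s,t)$ with $s<t$, the right-hand side is continuous in $x$ and, on any compact set of $x$'s, is uniformly bounded by an integrable function of $y$, by the standing integrability assumption on $h$ and a standard Gaussian-tail comparison of $f^{\circ}(s,x;t,y)$ at nearby values of $x$. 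The classical dominated-convergence criterion for differentiation under the integral sign then yields that $\partial_{x_{1}} V(s,x;t)$ exists, equals $\int_{\R^2}\partial_{x_{1}} f_{t,y}(s,x;t,y)\,h(y)\,dy$, depends continuously on $(s,x)$ for $s<t$, and inherits the announced upper bound.

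\textbf{Main obstacle.} No step is substantially hard: the whole argument recycles the tools already developed in Lemmas \ref{lemma:Sigamw}--\ref{lem: control density}. The only point that requires care is ensuring that the constant $C_{[\ref{lem : borne derive fbar}]}$ is genuinely independent of the freezing point $(r,z)$; this is secured by uniform ellipticity, which makes the ratios of Gaussian normalizations and of quadratic forms at different volatilities uniformly bounded, and the whole estimate then transfers from $f_{r,z}$ to the reference density $f^{\circ}$ with a constant depending only on $\underline\ar,\bar\ar$ and the parameters of Assumption \ref{hyp:A}.(i).
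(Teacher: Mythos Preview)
Your proof is correct and follows essentially the same route as the paper: combine \eqref{eq:Dx1fb} with the quadratic-form bound \eqref{eq: norne Sigma-1 w 1} of Lemma \ref{lemma:Sigamw}, absorb the factor $\sqrt{Q}$ into the Gaussian using the machinery of Lemma \ref{lem: control density} (the paper phrases this via the constant $C_{\eqref{eq: def const 1}}=\sup\sqrt{Q}\,\varpi$ introduced in the proof of \eqref{eq: borne I1 estimation L-L}), and then conclude the second part by dominated convergence. Your slightly more explicit factorization $\Sigmab_{s,t}(r,z)=\sigma_r^2(z)M$ to justify uniformity in $(r,z)$ is a clean way to spell out what the paper leaves implicit.
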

	\begin{proof} 
		The first inequality follows immediately from Lemmas    \ref{lemma:Sigamw} and \ref{lem: control density}, as in the proof of \eqref{eq: borne I1 estimation L-L}. The second one then follows by dominated convergence. 
	\end{proof}

	For the following, we recall the defintion of $\kappa_{1}$ in  \eqref{eq: kappa1}.

 	\begin{Proposition}\label{prop : fb C1} 
		Let the conditions of Theorem \ref{thm:f_well_defined} hold.
		Then,  for each $(t,y)\in (0,T]\x \R^{2}$, the map 
		$(s,x)\in [0,t)\x \R^{2}\mapsto f(s,x;t,y)$ is continuously differentiable in its first space variable $x_{1}$. Moreover, there exists $C_{[\ref{prop : fb C1}]}>0$ such that 
		$$
			|\partial_{x_{1}}f(s,x;t,y) |
			~\le~
			\frac{C_{[\ref{prop : fb C1}]}}{(t-s)^{1-\kappa_{1}}} f^{\circ}(s,x;t,y),
			~
			\mbox{for all}~
			(s,x;t,y)\in\Theta.
		$$
	\end{Proposition}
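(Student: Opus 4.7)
The approach is to differentiate the integral representation (\ref{eq:def_f_transition_proba}) term by term in $x_1$, then control each piece with the parametrix estimates already established. Writing
\[
f(s,x;t,y) \;=\; f_{t,y}(s,x;t,y) \;+\; \int_s^t \int_{\R^2} f_{r,z}(s,x;r,z)\,\Phi(r,z;t,y)\,dz\,dr,
\]
the first summand is handled by Lemma \ref{lem : borne derive fbar} applied at $(r,z)=(t,y)$, giving $C^1$-dependence on $x_1$ with the bound $|\partial_{x_1} f_{t,y}(s,x;t,y)| \le C(t-s)^{-(1+\beta_0)/2} f^\circ(s,x;t,y)$. For the integral summand, fix $r\in(s,t)$ and view the inner integral $V_r(s,x) := \int f_{r,z}(s,x;r,z)\Phi(r,z;t,y)\,dz$ as a case of the second statement in Lemma \ref{lem : borne derive fbar}, with weight $h(z) := \Phi(r,z;t,y)$; Proposition \ref{prop:existence_continuite_Phi} and the upper bound on $\int f^\circ$ guarantee that the needed integrability holds uniformly on compacts of $(s,t)$. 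Lemma \ref{lem : borne derive fbar} then yields differentiability of $V_r$ in $x_1$ together with
\[
|\partial_{x_1} V_r(s,x)| \;\le\; \frac{C}{(r-s)^{(1+\beta_0)/2}} \int_{\R^2} f^\circ(s,x;r,z)\,|\Phi(r,z;t,y)|\,dz,
\]
and a dominated-convergence argument permits exchanging $\partial_{x_1}$ with the $r$-integration.

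Next I would insert the Proposition \ref{prop:existence_continuite_Phi} bound $|\Phi(r,z;t,y)| \le C(t-r)^{\kappa_0-1} f^\circ(r,z;t,y)$ and apply a Chapman--Kolmogorov-type convolution estimate
\[
\int_{\R^2} f^\circ(s,x;r,z)\,f^\circ(r,z;t,y)\,dz \;\le\; C\,f^\circ(s,x;t,y),
\]
which one proves directly from the explicit Gaussian form (\ref{eq: def fcirc}) of $f^\circ$ together with the quantitative comparability between $\langle \Sigma_{s,t}^{-1} w,w\rangle$ and $|w_1|^2/(t-s)^{1+\beta'_1}+|w_2|^2/(t-s)^{1+\beta'_2}$ obtained in Lemma \ref{lem: control density}. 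What remains is the Beta-function time integral
\[
\int_s^t (r-s)^{-(1+\beta_0)/2} (t-r)^{\kappa_0-1}\,dr \;=\; (t-s)^{\kappa_0-(1+\beta_0)/2} B\!\Big(\tfrac{1-\beta_0}{2},\kappa_0\Big),
\]
which converges thanks to $\beta_0 < 1$ and $\kappa_0 > 0$ under (\ref{eq:def_kappa0}). Since $\kappa_0-(1+\beta_0)/2 = -(1-\kappa_1)$ by the definition of $\kappa_1$ in (\ref{eq: kappa1}), the integral piece is controlled by $C(t-s)^{-(1-\kappa_1)} f^\circ(s,x;t,y)$; combining with the parametrix piece gives the announced bound.

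Finally, continuity of $(s,x)\mapsto \partial_{x_1} f(s,x;t,y)$ on $[0,t)\times\R^2$ follows by dominated convergence from the very same majorant, using the continuity of $\partial_{x_1}V_r$ from Lemma \ref{lem : borne derive fbar} and of $\Phi$ from Proposition \ref{prop:existence_continuite_Phi}. The main technical obstacle is the Chapman--Kolmogorov-type convolution estimate for $f^\circ$: unlike in the non-degenerate setting, the covariance $\Sigma_{s,t}$ does not satisfy an additive semigroup property, so the convolution must be carried out using the diagonal quadratic-form bound from Lemma \ref{lem: control density} together with Assumption \ref{hyp:A}.(i); everything else reduces to the Beta-function computation and to invoking the already-established estimates on $\Phi$ and on $\partial_{x_1} f_{r,z}$.
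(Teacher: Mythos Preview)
Your proof is correct and follows essentially the same route as the paper: differentiate \eqref{eq:def_f_transition_proba} under the integral sign, apply Lemma \ref{lem : borne derive fbar} for the derivative bound on $f_{r,z}$, insert the estimate \eqref{eq: estime vraie densite} on $\Phi$, convolve $f^\circ$ with itself, and finish with the Beta integral. One simplification: the convolution step you flag as the ``main technical obstacle'' is in fact immediate, because $f^\circ$ is by construction the transition density of the Gaussian Markov process $\big(x_1 + 2\sqrt{\bar\ar}\,W_t,\ x_2 + \int_0^t (x_1 + 2\sqrt{\bar\ar}\,W_u)\,dA_u\big)$, so the Chapman--Kolmogorov identity $\int_{\R^2} f^\circ(s,x;r,z)\,f^\circ(r,z;t,y)\,dz = f^\circ(s,x;t,y)$ holds with equality and no separate quadratic-form comparison is needed.
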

	\begin{proof} Fix $z\in \R^{2}$. 
		In view of the estimate in \eqref{eq: estime vraie densite}, together with Lemma \ref{lem : borne derive fbar}, we can find $C>0$, that does not depend on $(s,x,t,y)\in \Theta$, such that   
		\begin{align*}
			&\int_{\R^{2}}\Big| \partial_{x_1}   f_{r,z}(s,x;r,  z) \Phi(r,  z; t,y)\Big|d  z \\
			\le~&
			C (r-s)^{\frac{-\beta_0-1}{2}} \int_{\R^{2}} f^{\circ}(s, x; r,  z) \big|  \Phi(r,  z; t, y) \big| d  z \\
			\le~&
			C (t-r)^{-1 + \kappa_{0}}  (r-s)^{\frac{-\beta_0-1}{2}} \int_{\R^{2}} f^{\circ}(s, x; r,  z) f^{\circ}(r,  z; t,y) d z\\
			=~&
			C (t-r)^{-1 + \kappa_{0}}  (r-s)^{\frac{-\beta_0-1}{2}} f^{\circ}(s, x; t,  y).
		\end{align*}
		Therefore, by the dominated convergence theorem,
		$$
			\partial_{x_1} \int_{s}^{t}\int_{\R^{2}}  f_{r, z}(s,x;r,  z)  \Phi(r,  z,t,y)d  zdr
		$$
		is well-defined and continuous, and 
		so is $\partial_{x_1} f(\cdot; t,y)$. The latter is  bounded from the above estimates by integrating over $r$ and using the relation between the Euler-Gamma  and the Beta functions.  
	\end{proof}

	We conclude this section by a continuity property result on $f$, which allows one to apply the $C^1$-It\^o's formula in the context of Theorem \ref{thm:vr_uniqueX}.
 
	\begin{Proposition}\label{prop : holder f en x2} 
		{Let Assumptions \ref{hyp:A} and \ref{hyp: stand ass}.(i) hold true.
		Assume in addition that \eqref{eq:def_kappa0} holds and that $\frac{\beta_3-\beta_{2}+\beta_{0}}{2} < 1$, 
		and let us fix $\alpha'\in \big( 0,\frac{1+\beta'_{2}}{2} \wedge (1- \frac{\beta_3-\beta_{2}+\beta_{0}}{2}) \big]$.}
		Then, for all $\delta>0$, there exists  $C_{[\ref{prop : holder f en x2}]}>0$ such that  
		\begin{align*}
			\big| f(s,x; t,y) - f(s,x'; t,y) \big|
			&~\le~
			C_{[\ref{prop : holder f en x2}]}|x_{2}-x'_{2}|^{\frac{2\alpha'}{1+\beta'_{2}}},
		\end{align*}
		for all $(s,x,t,y)\in \Theta$ and $x' = (x'_1, x'_2) \in \R^{2}$ such that $t-s\ge \delta$ and $x_{1}=x'_{1}$.
	\end{Proposition}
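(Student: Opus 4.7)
The plan is to exploit the representation \eqref{eq:def_f_transition_proba} and write
$$f(s,x;t,y) - f(s,x';t,y) = \Delta_1 + \Delta_2,$$
with $\Delta_1 := f_{t,y}(s,x;t,y) - f_{t,y}(s,x';t,y)$ and
$$\Delta_2 := \int_s^t\!\!\int_{\R^2}\!\big[f_{r,z}(s,x;r,z) - f_{r,z}(s,x';r,z)\big]\,\Phi(r,z;t,y)\,dz\,dr.$$
Since $x_1=x'_1$, the vectors $\wr_{s,t}(x,y)$ and $\wr_{s,t}(x',y)$ (see \eqref{eq:def_w_xy}) share the same first coordinate and their second components differ by exactly $x_2-x'_2$.

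The term $\Delta_1$ will be handled by the mean value theorem in the $x_2$-direction: formula \eqref{eq:Dx1fb} (with $i=2$), together with the bound \eqref{eq: norne Sigma-1 w 2} of Lemma \ref{lemma:Sigamw} and the standard absorption $\sqrt{u}\,e^{-u/2}\lesssim e^{-u/4}$, yields $|\partial_{x_2} f_{t,y}(s,\cdot;t,y)| \le C(t-s)^{-(1+\beta_3)/2} f^\circ(s,\cdot;t,y)$ (up to a harmless enlargement of the variance in $f^\circ$). Since $t-s\ge\delta$, this produces a Lipschitz-in-$x_2$ bound with constant depending on $\delta$, which is stronger than the required H\"older bound as the target exponent $\frac{2\alpha'}{1+\beta'_2}$ does not exceed $1$.

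For $\Delta_2$, I would interpolate between the Lipschitz estimate
$$|f_{r,z}(s,x;r,z) - f_{r,z}(s,x';r,z)| \le C(r-s)^{-(1+\beta_3)/2}|x_2-x'_2|\,\big[f^\circ(s,x;r,z)+f^\circ(s,x';r,z)\big]$$
(the intermediate point in the mean value argument is controlled by Lemma \ref{lem: fcirc 1/2} when $|x_2-x'_2|^{1/(1+\beta'_2)}\le(r-s)^{1/2}$, the complementary regime being handled directly by the trivial bound) and the uniform bound supplied by Lemma \ref{lem: control density}. Setting $\theta := \frac{2\alpha'}{1+\beta'_2}\in(0,1]$ and using $\min(1,K)\le K^\theta$ for $K\ge 0$ gives
$$|f_{r,z}(s,x;r,z) - f_{r,z}(s,x';r,z)| \le C(r-s)^{-\theta(1+\beta_3)/2}|x_2-x'_2|^\theta\big[f^\circ(s,x;r,z)+f^\circ(s,x';r,z)\big].$$
Combining this with $|\Phi(r,z;t,y)| \le C(t-r)^{-1+\kappa_0}f^\circ(r,z;t,y)$, obtained from Proposition \ref{prop:existence_continuite_Phi} via the change of variables \eqref{eq:def_Phi}, and with the semigroup identity $\int f^\circ(s,x;r,z) f^\circ(r,z;t,y)\,dz \le C f^\circ(s,x;t,y)$ (valid since $f^\circ$ is the transition density of a Gaussian Markov process), we obtain
$$|\Delta_2| \le C_\delta |x_2-x'_2|^\theta \int_s^t (r-s)^{-\theta(1+\beta_3)/2}(t-r)^{-1+\kappa_0}\,dr.$$

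The main obstacle is to check that this Beta-type integral is finite, which reduces to $\theta(1+\beta_3)/2<1$, i.e.\ $\alpha'<\frac{1+\beta_2-\beta_0}{1+\beta_3}$. Cross-multiplying, this is equivalent to $(\beta_3-\beta_2+\beta_0)(1-\beta_3)\le 0$ (up to the positive factor $1+\beta_2-\beta_0$), and since $\beta_3\ge\beta_2$ and $\beta_0\ge 0$ by Remark \ref{rem : ordre beta}, the assumption $\alpha'\le 1-\frac{\beta_3-\beta_2+\beta_0}{2}$ is precisely what delivers this inequality; the other assumption $\alpha'\le\frac{1+\beta'_2}{2}$ ensures $\theta\le 1$ and thus the validity of the interpolation. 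Boundary equality cases (in particular $\beta_3=1$) introduce a logarithmic correction that can be absorbed by slightly reducing $\theta$ within the admissible range, without affecting the stated H\"older exponent.
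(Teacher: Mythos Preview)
Your strategy matches the paper's: mean value theorem in $x_2$ for $f_{r,z}$, interpolation against the trivial bound from Lemma~\ref{lem: control density}, then integration against the estimate on $\Phi$ from Proposition~\ref{prop:existence_continuite_Phi}. The paper performs the interpolation by a direct case split on whether $|x_2-x'_2|\lessgtr (r-s)^{(1+\beta'_2)/2}$, which produces the exponent $\alpha'+\tfrac{\beta_3-\beta_2+\beta_0}{2}$ at the $(r-s)$-singularity; the hypothesis $\alpha'\le 1-\tfrac{\beta_3-\beta_2+\beta_0}{2}$ then makes integrability immediate.

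Your interpolation via $\min(1,K)\le K^\theta$ yields instead the exponent $\theta(1+\beta_3)/2$, and your verification that this is $<1$ is flawed. The cross-multiplication you describe shows that $1-\tfrac{a}{2}\le \tfrac{1+\beta'_2}{1+\beta_3}$ (with $a:=\beta_3-\beta_2+\beta_0$) is equivalent to $a(1-\beta_3)\le 0$; since $a\ge 0$ by Remark~\ref{rem : ordre beta}, this requires $\beta_3\ge 1$, which is \emph{not} assumed (e.g.\ Example~\ref{example : cas de A}.(ii) with $\gamma<\tfrac12$ gives $\beta_3=2\gamma<1$). So the constraint $\alpha'\le 1-\tfrac{a}{2}$ alone does not deliver your integrability condition. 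The repair is immediate once you also use the constraint $\alpha'\le\tfrac{1+\beta'_2}{2}$, which you invoke only to guarantee $\theta\le 1$: when $\beta_3<1$, the bound $\theta\le 1$ already gives $\theta(1+\beta_3)/2<1$; when $\beta_3\ge 1$, your cross-multiplication argument is valid. Alternatively, one can check directly that your exponent is always dominated by the paper's, $\theta(1+\beta_3)/2=\alpha'+\tfrac{\alpha'}{1+\beta'_2}a\le \alpha'+\tfrac{a}{2}$, so the paper's hypothesis still suffices.
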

	\begin{proof} 
		Let $I:=\big| f_{r,z}(s,x; t,y) - f_{r,z}(s,x'; t,y) \big|$ and denote by $C>0$ a generic constant that can change from line to line but does not depend on $(s,x,x',t,y,z)$. 
		Then, by  \eqref{eq:Dx1fb}, Lemma \ref{lemma:Sigamw} and Lemma \ref{lem: control density}, 
		one can find $x''_{2}$ in the interval formed by $x_{2}$ and $x'_{2}$ such that, with $x'':=(x_{1}, x''_{2})$,  
		\begin{align*}
			I&\le |x_{2}-x'_{2}|\;|\partial_{x_{2}}f_{r, z}(s,x''; t,y)|\le  C  |x_{2}-x'_{2}| \frac{1} {(t-s)^{\frac{1+\beta_3}{2}}} f^{\circ}(s,x'';t,y).
		\end{align*}
		If $(t-s)^{\frac{1+\beta'_{2}}{2}}/|x_{2}-x'_{2}|\ge 1$,  then
		\begin{align*}
			I& \le  C |x_{2}-x'_{2}|^{\frac{2\alpha'}{1+\beta'_{2}}} \frac{1}{(t-s)^{\frac{1+\beta_3}{2}-\frac{1+\beta'_{2}}{2}+\alpha'}} f^{\circ}(s,x'';t,y)\\
			&=C |x_{2}-x'_{2}|^{\frac{2\alpha'}{1+\beta'_{2}}} \frac{1}{(t-s)^{\alpha'+\frac{\beta_3-\beta_{2}+\beta_{0}}{2}}} f^{\circ}(s,x'';t,y).
		\end{align*}
		Otherwise, by \eqref{eq: majo fy}, 
		$$
			I\le  |x_{2}-x'_{2}|^{\frac{2\alpha'}{1+\beta'_{2}}} \frac{1}{(t-s)^{\alpha'}} \big( f^{\circ}(s,x; t,y) + f^{\circ}(s,x'; t,y) \big).
		$$
		We conclude by using the fact that $\beta_3-\beta_{2}+\beta_{0}\ge 0$ and by appealing to \eqref{eq: estime vraie densite}.
	\end{proof}

\subsection{$C^2$-regularity} 

	We now prove that  $f$ is $C^{2}$ in its first space variable $x_{1}$ and that $\vr$ is a smooth solution of  the path-dependent PDE \eqref{eq:ppde}.

\subsubsection{Potential estimate and H\"older regularity of $\Phi$}

	Let $0 \le s < t \le T$ and $x \in \R^2$, 
	$h: \R^2 \longrightarrow \R$ be a (measurable) function, we first estimate the second order derivative of the following functional: 
	$$
		V(s,x;t)
		~:=
		\int_{\R^{2}}  f_{t, y}(s,x;t,y) h(y)dy.
	$$
	Let us also denote
  		\begin{align}\label{eq: def bar E-1} 
			\Eb^{-1}_{s,t}(x) 
			~:=~
			\left( \begin{array}{cc} 
			1 & 0 \\
			A_t - A_s & 1
			\end{array} \right)
			x .
		\end{align}
 
	\begin{Lemma}\label{lem : borne derive seconde fbar} 
		Let Assumption \ref{hyp:A} and Assumption \ref{hyp: stand ass}.(i) hold.
		Let $h:  \R^2 \longrightarrow \R$ and $h_{\circ}: \R^2 \longrightarrow \R_+$ be such that,
		for some $\alpha_h > 0$ and $C_h > 0$,
		\begin{align*}
			&\big| h(y) - h(y') \big| 
			\le
			C_{h} \Big( |y_1 - y'_1|^{\frac{2 \alpha_h}{1+\beta'_1}} + |y_2 - y'_2|^{\frac{2 \alpha_h}{1+\beta'_2}} \Big) \big( h_{\circ} (y) + h_{\circ} (y') \big),
			\mbox{ for all } y,y'\in \R^{2},
		\end{align*}
		and
		$$
			\int_{\R^{2}} f^{\circ}(s,x;t,y)  h_{\circ} (y) dy ~<~\infty,
			~\mbox{for all}~
			0 \le s < t \le T,
			~~x \in \R^2.
		$$
		Assume that 
		$$
			\kappa_h
			~:=~ 
			\min \Big(\frac{2\beta_{4}+1+\beta'_{1}}{1+\beta'_{2}}, 1 \Big)\min\{ \alpha_h,\alpha\}- \beta_0
			~>~
			0.
		$$
		Then, $\partial^2_{x_1x_1} V(s,x;t)$ is well defined and continuous.
		Moreover
		\begin{align*} 
			\partial^{2}_{x_{1}x_{1}}V(s,x;t)  
			&=
			\int_{\R^{2}} \partial^{2}_{x_{1}x_{1}} f_{t,y}(s,  x;t,y) h(y)dy,
		\end{align*}
		and there exists $C>0$, that does not depend on $C_{h}>0$, such that 
		\begin{align*} 
			\big| \partial^{2}_{x_{1}x_{1}}V(s,x;t) \big|
			&\le 
			\frac{C C_{h}}{(t-s)^{1-\kappa_{h}}} 
			\left( \big| h \big(\Eb^{-1}_{s,t}(x)\big) \big|
				+
				{
				\big| h_{\circ} \big(\Eb^{-1}_{s,t}(x)\big) \big|
				+
				\int_{\R^{2}} f^{\circ}(s,x;t,y) h_{\circ} (y) dy
				}
			\right),
		\end{align*}
		for all $0 \le s < t \le T$ and $x \in \R^2$.
	\end{Lemma}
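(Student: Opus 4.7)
The proof follows the classical parametrix philosophy in two steps: (a) justify the exchange of the second derivative with the integral, and (b) exploit a cancellation trick to obtain the sharp bound. For (a), I would start from the first-order formula of Lemma \ref{lem : borne derive fbar}, $\partial_{x_{1}}V(s,x;t)=\int\partial_{x_{1}}f_{t,y}(s,x;t,y)h(y)\,dy$, and form its $x_{1}$-difference quotient. The mean value theorem rewrites it as $\int\partial^{2}_{x_{1}x_{1}}f_{t,y}(s,x+\theta\epsilon e_{1};t,y)h(y)\,dy$, and the crude pointwise bound $|\partial^{2}_{x_{1}x_{1}}f_{t,y}|\le C(t-s)^{-(1+\beta_{0})}\bigl(1+|\Sigmab^{-1}_{s,t}(t,y)\wr_{s,t}(x,y)|^{2}\bigr)f^{\circ}$, coming from \eqref{eq:Dx1x2fb} and Lemmas \ref{lemma:Sigamw}--\ref{lem: control density}, supplies a locally integrable dominant (integrability in $y$ follows from $\int f^{\circ}|h|\,dy<\infty$, itself a consequence of the Hölder growth hypothesis on $h$). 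Dominated convergence then yields the identity $\partial^{2}_{x_{1}x_{1}}V(s,x;t)=\int \partial^{2}_{x_{1}x_{1}}f_{t,y}(s,x;t,y)h(y)\,dy$ together with continuity in $(s,x,t)$.

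\textbf{Sharp bound via cancellation.} Let $y_{0}:=\Eb^{-1}_{s,t}(x)$, the mean of the Gaussian $y\mapsto f_{t,y_{0}}(s,x;t,y)$. Since $\int f_{t,y_{0}}(s,x;t,y)\,dy=1$ and the same Gaussian decay permits second-order differentiation under the integral, one gets $\int\partial^{2}_{x_{1}x_{1}}f_{t,y_{0}}(s,x;t,y)\,dy=0$. Writing $h(y)=[h(y)-h(y_{0})]+h(y_{0})$ then splits the integral as $J_{1}+J_{2}$, with
\begin{align*}
J_{1}&:=\int_{\R^{2}}\partial^{2}_{x_{1}x_{1}}f_{t,y}(s,x;t,y)\bigl[h(y)-h(y_{0})\bigr]dy,\\
J_{2}&:=h(y_{0})\int_{\R^{2}}\partial^{2}_{x_{1}x_{1}}\bigl[f_{t,y}-f_{t,y_{0}}\bigr](s,x;t,y)\,dy.
\end{align*}
For $J_{1}$, the Hölder hypothesis on $h$ combined with the identity $y-y_{0}=-\Eb^{-1}_{s,t}(\wr_{s,t}(x,y))$ converts $|y-y_{0}|$-powers into $|w|$-powers, picking up an extra $(t-s)^{\beta_{4}}|w_{1}|$ contribution in the second component via \eqref{eq:holder A}. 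Absorbing these $|w|$-powers into the Gaussian factor $\varpi^{1}$ of \eqref{eq: def varomega} (as in \eqref{eq: borne norme compensee par varpi} of Lemma \ref{lem: fcirc 1/2}) produces exactly a $(t-s)^{\min((2\beta_{4}+1+\beta'_{1})/(1+\beta'_{2}),1)\,\alpha_{h}}$ gain, which combines with the prefactor $(t-s)^{-(1+\beta_{0})}$ to give $(t-s)^{\kappa_{h}-1}$. Terms where $h_{\circ}$ is evaluated at $y_{0}$ versus at $y$ are separated by shifting $f^{\circ}(s,x;t,\cdot)$ to $f^{\circ,\frac12}(s,x;t,\cdot)$ via \eqref{eq: majo fcirc par fcirc 1/2}. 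For $J_{2}$, since $f_{r,z}$ depends on $z$ only through $a:=\sigma^{2}_{r}(z)$, the Hölder condition \eqref{eq : hyp holder coeff bar sigma} yields $|\sigma^{2}_{t}(y)-\sigma^{2}_{t}(y_{0})|\le C(|w_{1}|^{2\alpha/(1+\beta'_{1})}+|w_{2}|^{2\alpha/(1+\beta'_{2})})$; a mean value expansion in $a$, with uniform ellipticity \eqref{eq : unif elliptic} controlling the rational factors in $\Sigmab^{-1}_{s,t}$, recovers the same anisotropic Hölder gain but with $\alpha$ in place of $\alpha_{h}$, producing again the scaling $(t-s)^{\kappa_{h}-1}$.

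\textbf{Main obstacle.} The delicate step is the estimate on $J_{2}$: one has to expand $\partial^{2}_{x_{1}x_{1}}f_{t,y}$ as a rational function of the frozen volatility $a=\sigma^{2}_{t}(y)$ and differentiate it in $a$, which brings in third-order polynomials in $w$ weighted by $\Sigmab^{-1}_{s,t}$ (cf.\ \eqref{eq:Dx1x1x2fb}). Tracking the anisotropic Hölder exponents $(1+\beta'_{1},1+\beta'_{2},\beta_{4})$ through this expansion, so that the Hölder gain of order $\alpha$ in $\sigma^{2}$ genuinely dominates the $(t-s)^{-(1+\beta_{0})}$ singularity, is what forces the particular form $\min\bigl((2\beta_{4}+1+\beta'_{1})/(1+\beta'_{2}),1\bigr)\min\{\alpha_{h},\alpha\}-\beta_{0}>0$ of the hypothesis $\kappa_{h}>0$; this interplay, rather than the derivative-under-integral step, is the true technical heart of the lemma.
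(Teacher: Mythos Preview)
Your proposal is correct and follows essentially the same route as the paper: the decomposition $J_1+J_2$ is exactly the paper's $I_1+I_2$ (the paper writes a third term $I_3=h(y_0)\int\partial^2_{x_1x_1}f_{t,y_0}\,dy$ and shows it vanishes, which you absorb into the setup), and the estimates on each piece proceed by the same H\"older-plus-Gaussian-absorption mechanism. Two minor remarks: the appeal to \eqref{eq: majo fcirc par fcirc 1/2} and $f^{\circ,\frac12}$ for $J_1$ is unnecessary here (the paper keeps $h_\circ(y)$ and $h_\circ(\Eb^{-1}_{s,t}(x))$ separate and bounds both against $f^\circ$ directly), and in your $J_2$ bound the H\"older increment $|\sigma^2_t(y)-\sigma^2_t(y_0)|$ also carries the cross term $|(A_t-A_s)w_1|^{2\alpha/(1+\beta'_2)}$, exactly as in your $J_1$ analysis---this is where the factor $(2\beta_4+1+\beta'_1)/(1+\beta'_2)$ reappears for $\alpha$, so do not drop it.
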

	\begin{proof} For simplicity, we only consider the case $t-s\le 1$. 
		To estimate the second order derivative, we  decompose 
		$$
			I:=\int_{\R^{2}}  \partial_{x_{1}x_{1}}^{2}f_{t, y}(s,x;t,y) h(y)dy
		$$
		into the sum of the three following terms, with $\check x:=\Eb^{-1}_{s,t}(x)$, 
		\begin{align*}
			I_{1}&:=\int_{\R^{2}}  \partial_{x_{1}x_{1}}^{2} f_{t, y}(s,x;t,y) \big[ h(y)- h \big(\check x\big) \big]dy, 
			\\
			I_{2}&:= h \big(\check x\big) \int_{\R^{2}}  \left\{\partial_{x_{1}x_{1}}^{2} f_{t,y}(s,x;t,y) - \partial_{x_{1}x_{1}}^{2} f_{t, \check x} (s,x;t,y)\right\} dy,
			\\
			I_{3}&:= h \big(\check x\big) \int_{\R^{2}}  \partial_{x_{1}x_{1}}^{2}f_{t, \check x} (s,x;t,y) dy.
		\end{align*}
	All over this proof, $C>0$ denotes a generic constant that may change from line to line but does not depend on $C_{h}$, $(s,x;t,y)\in \Theta$ and $z\in \R^{2}$.
 
	\vspace{0.5em}

	$\mathrm{(i)}$ We first estimate $I_{1}$. Set $w=\wr_{s,t}(x,y)$, recall \eqref{eq:def_w_xy}.
	By the H\"older regularity property of $h$ and the inequality $(a+b)^{\gamma}\le 2^{\gamma}(a^{\gamma}+b^{\gamma})$ for $a,b\ge 0$ and $\gamma>0$,  one has
	\begin{align*}
		&\Big| \partial_{x_{1}x_{1}}^{2} f_{t, y}(s,x;t,y) \big[h(y)- h \big(\check x\big)] \Big| \\
		&\le
		C_{h} \Big| \partial_{x_{1}x_{1}}^{2}f_{t,y}(s,x;t,y)\Big| \Big( |x_1 - y_1|^{\frac{2\alpha_h}{1+\beta'_1}}+ |y_2 - x_2 -(A_t - A_s) x_1 |^{\frac{2\alpha_h}{1+\beta'_2}} \Big) \big(  h_{\circ} (y) + h_{\circ} (\Eb^{-1}_{s,t}(x))\big) \\
		&\le
		C C_{h}\Big| \partial_{x_{1}x_{1}}^{2}f_{t,y}(s,x;t,y)\Big| \Big( |w_1|^{\frac{2\alpha_h}{1+\beta'_1}} + |w_2|^{\frac{2 \alpha_h}{1+\beta'_2}} +\big |(A_t-A_s) w_1 \big|^{\frac{2\alpha_h}{1+\beta'_2}} \Big) \big( h_{\circ} (y) +h_{\circ} (\Eb^{-1}_{s,t}(x))   \big).
	\end{align*}
	Then, arguing as in the proof of  Lemma \ref{lem : estime L-Lfy} and using \eqref{eq:holder A}, we deduce that 
	\begin{align*} 
		\big| I_1 \big|
		\le
		&C C_{h}\Big( \frac{1}{(t-s)^{1 + \beta_0 - \alpha_h}} + \frac{1}{(t-s)^{1+\beta_{0}-\alpha_{h}\frac{2\beta_{4}+1+\beta'_{1}}{1+\beta'_{2}}}} \Big)
		\int_{\R^{2}} f^{\circ}(s, x;t,y) \big(h_{\circ} (y) + h_{\circ} (\Eb^{-1}_{s,t}(x))  \big) dy,
		\\
		\le~&
		\frac{C C_{h}}{(t-s)^{1-\kappa_{h}}} 
		{\Big( \int_{\R^{2}} f^{\circ}(s,x;t,y) h_{\circ} (y)  dy + h_{\circ} \big(\Eb^{-1}_{s,t}(x) \big) \Big) .}
	\end{align*}
	
	\noindent $\mathrm{(ii)}$ We now consider $I_{2}$. 
	By \eqref{eq:Dx1x2fb} and Lemma \ref{lemma:Sigamw}, 
	\begin{align*}
		& 
		\big|\partial_{x_{1}x_{1}}^{2}f_{t,y}(s,x;t,y)-\partial_{x_{1}x_{1}}^{2}f_{t,\check x}(s,x;t,y) \big| \\
		~\le~&
		\big|  f_{t,y}(s,x; t,y) - f_{t,\check x}(s,x; t,y) \big| \Big| \big(\Sigmab^{-1}_{s,t}(t,y)w \big)_1^2 - \big( \Sigmab^{-1}_{s,t}(t,y) \big)_{1,1} \Big| \\
		&+
		   f_{t,\check x}(s,x; t,y)  \Big| \big(\Sigmab^{-1}_{s,t}(t,y)w \big)_1^2 - \big(\Sigmab^{-1}_{s,t}(t,\check x)w \big)_1^2 \Big| \\
		&+ 
		   f_{t,\check x}(s,x; t,y)  \Big| \big( \Sigmab^{-1}_{s,t}(t,y) \big)_{1,1} -  \big( \Sigmab^{-1}_{s,t}(t,\check x) \big)_{1,1} \Big|\\
		~=~&
		\big|  f_{t,y}(s,x; t,y) - f_{t,\check x}(s,x; t,y) \big| \Big| \big(\Sigmab^{-1}_{s,t}(t,y)w \big)_1^2 - \big( \Sigmab^{-1}_{s,t}(t,y) \big)_{1,1} \Big| \\
		&+
		   f_{t,\check x}(s,x; t,y) \left|\sigma_{t}(y)^{-4}-\sigma_{t}(\check x)^{-4} \right| \Big| \big(\Sigmab^{-1}_{s,t}(1)w \big)_1\Big|^{2} \\
		&+ 
		  f_{t,\check x}(s,x; t,y)  \left|\sigma_{t}(y)^{-2}-\sigma_{t}(\check x)^{-2} \right| \Big| \big( \Sigmab^{-1}_{s,t}(1) \big)_{1,1}  \Big|, 
	\end{align*}
	in which, by \eqref{eq : hyp holder coeff bar sigma},
	\begin{align*} 
		\left|\sigma_{t}(y)-\sigma_{t}(\check x) \right|
		&\le 
		C_{(\ref{eq : hyp holder coeff bar sigma})} 
			\Big(
			\big|  y_{1}-x_{1} \big|^{\frac{2\alpha}{1+\beta'_1}}
			+ 
			 \big|  y_{2}-x_{1}(A_{t}-A_{s})-x_{2}\big|^{\frac{2 \alpha}{1+\beta'_2}} 
			\Big)\\
		&\le C  \Big( |w_1|^{\frac{2\alpha_g}{1+\beta'_1}} + |w_2|^{\frac{2 \alpha_g}{1+\beta'_2}} +\big |(A_t-A_s) w_1 \big|^{\frac{2\alpha_g}{1+\beta'_2}} \Big)
	\end{align*}
	by the same arguments as in in step 1.
	Using Lemma \ref{lem : derive f par rapport vol}  below, \eqref{eq: norne Sigma-1 w 1}, \eqref{eq: borne Sigma -1 11} and  \eqref{eq : unif elliptic}, it follows that
	$$
		\big| I_2 \big|
		~\le~
		 \frac{C |h \big(\check x\big)|}{(t-s)^{1+\beta_{0}-\alpha\frac{2\beta_{4}+1+\beta'_{1}}{1+\beta'_{2}}}} 
		\int_{\R^{2}} f^{\circ}(s, x;t,y) dy\le \frac{C |h \big(\Eb^{-1}_{s,t}(x)\big)|}{(t-s)^{1-\kappa_{h}}}.
	$$

	\noindent $\mathrm{(iii)}$ We finally consider $I_{3}$. 
	Notice that $y \mapsto f_{t,\check x}(s,x; t, y)$ is a Gaussian density function, so that
	$$
		\int_{\R^2} \big| \partial^2_{x_1 x_1} f_{t,\check x}(s,x; t, y) \big| dy 
		~<~
		\infty.
	$$
	Moreover, by the definition of $f_{t,\check x}(s,x; t, y)$, one has
	$$
		D_y f_{t,\check x}(s,x; t, y) 
		= 
		- \left( \begin{array}{cc} 
		1 & 0 \\
		- (A_t - A_s) & 1
		\end{array} \right)^{\top}
		D_{x} f_{t,\check x}(s,x; t,y),
	$$
	so that
	\begin{align*}
		\partial_{x_{1}}f_{t,\check x}(s,x;t,y) 
		=
		-\partial_{y_1} f_{t,\check x}(s,x; t, y)
		- (A_t- A_s) \partial_{y_2} f_{t,\check x}(s,x; t,y),
	\end{align*}
	which implies 
	$$
		\int_{\R} \int_{\R}  \partial_{x_{1}} f_{t,\check x} (s,x;t,y) dy_1 dy_2
		~=~
		0,
	$$
	and therefore $I_3 = 0$.
	
	\vspace{0.5em}
	
	Finally, we can apply the Leibniz integral rule to interchange the derivative and the integral, and hence to conclude the proof. 
	\end{proof}
	
	\begin{Remark} \label{rem:dt_Vsxr}
		Let us consider $V(s,x;t)$ as a path-dependent functional:
		$$
			\overline V(s, \xr; t) 
			~:=~
			V(s, \xr(s), I_s(\xr); t) 
			~=~
			\int_{\R^2} \fr_{t, y}(s, \xr; t, y) h(y) dy. 
		$$
		In view of Remark \ref{rem:PPDE_f_rz},
		the above results implies that, in the context of Lemma \ref{lem : borne derive seconde fbar}, the second order vertical derivative $\partial^2_{\xr \xr} \overline V(s,\xr; t)$ is well defined.
		Moreover, by \eqref{eq:PPDE_f_rz}, one has 
		\begin{equation} \label{eq:bound_ds_fr}
			\partial_s \fr_{t,y}(s, \xr; t,y) 
			~=~
			- \frac12 \sigma(t,y)^{2} \partial^2_{\xr \xr} \fr_{t,y} (s, \xr; t,y).
		\end{equation}
		Then, by the same technique as in Lemma \ref{lem : borne derive seconde fbar}, we can deduce that the horizontal derivative $\partial_s \overline V(s, \xr; t)$ is also well defined.
	\end{Remark}

	We now provide the following easy estimate which is used in the proof of Lemma \ref{lem : borne derive seconde fbar}.

	\begin{Lemma}\label{lem : derive f par rapport vol} 
		Let Assumption \ref{hyp:A}.(i) and Assumption \ref{hyp: stand ass}.(i) hold.
		Then, there exists $C_{[\ref{lem : derive f par rapport vol}]}>0$ such that,    for all $(s,x,t,y)\in \Theta$ and $z,z'\in \R^{2}$, 
		\begin{align*}
		& \big| f_{t,z}(s,x; t,y) - f_{t, z'}(s,x; t,y) \big|\\
		&\le C_{[\ref{lem : derive f par rapport vol}]}\left(|z_{1}-z_{1}'|^{\frac{2\alpha}{1+\beta'_{1}}}+|z_{2}-z_{2}'|^{\frac{2\alpha}{1+\beta'_{2}}}\right)
			\left[1+  \langle  \Sigmab^{-1}_{s,t}(1) w,w\rangle\right]( \varpi f^{\circ})(s,x;t,y) 
		\end{align*}
		in which $w:=\wr_{s,t}(x,y)$.
	\end{Lemma}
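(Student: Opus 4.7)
The plan is to reduce the problem to a one-dimensional perturbation argument in the scalar parameter $a := \sigma^2_t(z)$. The key observation is that $f_{t,z}(s,x;t,y)$ depends on $z$ only through $a$: writing $M := \Sigmab_{s,t}(1)$ for the matrix of \eqref{eq:Sigma_st} with its $\sigma^2$-factor set to $1$, one has $\Sigmab_{s,t}(t,z) = a\, M$ and therefore $f_{t,z}(s,x;t,y) = \phi(a)$ with
$$
\phi(a) ~:=~ \frac{1}{2\pi\, a\, \det{M}^{1/2}} \exp\!\left(-\frac{Q}{2a}\right), \qquad Q := \langle M^{-1} w,w\rangle = \langle \Sigmab^{-1}_{s,t}(1)w,w\rangle,
$$
where $w = \wr_{s,t}(x,y)$. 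Setting $a' := \sigma^2_t(z')$, both $a$ and $a'$ lie in $[\underline\ar,\bar\ar]$ by Assumption \ref{hyp: stand ass}.(i), and the mean-value theorem provides some $a^{*} \in [\underline\ar,\bar\ar]$ such that $|\phi(a) - \phi(a')| \le |a - a'|\,|\phi'(a^{*})|$.

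Next I would compute $\phi'(a) = \phi(a)\bigl(-a^{-1} + Q/(2a^2)\bigr)$, which immediately gives the bound $|\phi'(a^{*})| \le C(1 + Q)\,\phi(a^{*})$ for some constant $C$ depending only on $\underline\ar$. To control $\phi(a^{*})$, I would observe that the lower bound on $\langle \Sigmab^{-1}_{s,t}(r,z) w,w\rangle$ derived in the proof of Lemma \ref{lem: control density} used only the uniform ellipticity $\underline\ar \le \sigma^2 \le \bar\ar$; running exactly the same computation at volatility level $a^{*}$ yields
$$
\phi(a^{*}) ~\le~ C\, \varpi(s,x;t,y)\, f^{\circ}(s,x;t,y).
$$

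It then remains to estimate $|a - a'|$. Using $|a - a'| \le 2\sqrt{\bar\ar}\,|\sigma_t(z) - \sigma_t(z')|$, I would apply the H\"older bound \eqref{eq : hyp holder coeff bar sigma} in the limit $s \uparrow t$: as $A_t - A_s \to 0$, we have $\wr_{s,t}(z,z') \to z - z'$ and $|t-s|^{\alpha} \to 0$, so continuity of $\sigma$ from Assumption \ref{hyp: stand ass}.(i) passes the H\"older inequality to the boundary, giving
$$
|\sigma_t(z) - \sigma_t(z')| ~\le~ C\Big(|z_1 - z_1'|^{\frac{2\alpha}{1+\beta'_1}} + |z_2 - z_2'|^{\frac{2\alpha}{1+\beta'_2}}\Big).
$$
Combining these three displays produces the stated inequality. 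The argument involves no genuine obstacle; the only delicate point is to justify the boundary case $s=t$ of \eqref{eq : hyp holder coeff bar sigma}, which is needed because that hypothesis is formally stated over $\Theta$ only for $s < t$.
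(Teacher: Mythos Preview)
Your proposal is correct and matches the paper's own proof essentially line for line: the paper also parametrizes by $a=\sigma^{2}_{t}(z)$, differentiates to obtain $\partial_{a}f_{[a]}=[-a^{-1}+Q/(2a^{2})]f_{[a]}$, bounds $f_{[a^{*}]}$ via the argument of Lemma \ref{lem: control density}, and then invokes \eqref{eq : hyp holder coeff bar sigma} for the $|\sigma_{t}(z)-\sigma_{t}(z')|$ factor. The boundary issue $s=t$ you flag is glossed over in the paper, and your limit argument via continuity of $\sigma$ is the natural fix.
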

  
	\begin{proof} Let us write $f_{[a]}$ for $f_{t, z}$ if $\sigma^{2}_{t}(z)=a$, and let $\partial_{a} f_{[a]}$ denote its derivative with respect to this parameter $a$. Then, 
  $$
  \partial_{a} f_{[a]}(s,x; t,y)=\left[-\frac1{a}+\frac{\sigma_{t}(0)^{2}}{2a^{2}}\langle \Sigmab^{-1}_{s,t}(0)w,w\rangle\right]f_{[a]}(s,x; t,y)
  $$
  in which $w=\wr_{s,t}(x,y)$ is as in \eqref{eq:def_w_xy}. In view of Assumption \ref{hyp: stand ass} and Lemma \ref{lem: control density}, it follows that 
   $$
  |\partial_{a} f_{[a]}(s,x; t,y)|\le C\left[1+\langle \Sigmab^{-1}_{s,t}(0)w,w\rangle\right] \varpi(s,x;t,y) f^{\circ} (s,x;t,y),
  $$
  for some $C>0$ that does not depend on $a, (s,x,t,y)$.  
  We conclude by appealing to \eqref{eq : hyp holder coeff bar sigma}.
    \end{proof}

	In order to apply Lemma \ref{lem : borne derive seconde fbar} to \eqref{eq: def ft recursif}, we need to prove that the function $\Phi(s,x; t, y)$ defined by \eqref{eq:def_Phit} and \eqref{eq:def_Phi} is H\"older in $x$. Recall the definition of $
\widehat{\Delta \beta}$ in \eqref{eq: def wide hat Delat beta}, of $\kappa_{0}$ in \eqref{eq:def_kappa0} and of $f^{\circ,\frac12}$ in \eqref{eq: def fcirc1/2}.

	\begin{Lemma}\label{lem: PHI holder} 
		Let the conditions of Theorem \ref{thm:c12} hold.
		Fix  $\alpha_\Phi \in  \big(0,\hat \alpha_\Phi\wedge \kappa_{0}\wedge \min\limits_{i=1,2} \frac{1+\beta'_{i}}{2} \big)$.
		Then, there exists $C_{\alpha_\Phi}>0$ such that,
		\begin{align*}
			&| \Phi(s,x;t,y)- \Phi(s,x';t,y)| \\
			\le~&
			C_{\alpha_\Phi}
			\frac{|x_1-x_1'|^{\frac{2\alpha_\Phi}{1+\beta'_1}} + |x_2 - x_2'|^{\frac{2\alpha_\Phi}{1+\beta'_2}}}{(t-s)^{1 - \eta_{\Phi} }}
			\Big( f^{\circ,\frac12}(s, x;t,y)+f^{\circ,\frac12}(s,x';t,y) \Big),
		\end{align*}
		for all $(s,x,t,y)\in \Theta$, 
		in which
		$$
			\eta_{\Phi}
			~:=~
			\hat \alpha_\Phi\wedge \kappa_{0}- \alpha_{\Phi}
			~>~
			0.
		$$ 
	\end{Lemma}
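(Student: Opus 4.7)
The plan is to establish the Hölder continuity of $\Phi$ in $x$ in two stages: first prove the analogous estimate for $\Delta_0$ (the $k=0$ term), then transfer it to $\Phi$ by using the integral equation \eqref{eq: sol eq Phi} just once, without any further iteration. Because the change of variables $x\mapsto \mathbf{A}_s x$ is linear with matrix entries controlled by \eqref{eq:holder A}, Hölder exponents in the mixed scales $2\alpha_\Phi/(1+\beta'_i)$ are preserved up to constants, so I may work directly in the original coordinates with $\Delta_0(s,x;t,y):=\tilde\Delta_0(s,\mathbf{A}_s x;t,\mathbf{A}_t y)$ and $\Phi$.

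The base estimate rests on the explicit representation
$$\Delta_0(s,x;t,y) \;=\; \mu_s(x)\,\partial_{x_1}f_{t,y}(s,x;t,y) \;+\; \tfrac12\bigl[\sigma^2_s(x)-\sigma^2_t(y)\bigr]\,\partial^2_{x_1 x_1}f_{t,y}(s,x;t,y),$$
obtained from \eqref{eq: lien derive f bar f}--\eqref{eq: lien derive seconde f bar f} and $\mathbf{A}_s^2=I$. To estimate $\Delta_0(s,x;t,y)-\Delta_0(s,x';t,y)$, I would insert $\pm\mu_s(x')$ and $\pm\sigma^2_s(x')$ to split the difference into four pieces. The pieces in which the coefficient jumps and the derivative of $f_{t,y}$ is evaluated at a fixed $x$ are bounded directly using the Hölder conditions \eqref{eq : hyp holder coeff bar mu}--\eqref{eq : hyp holder coeff bar sigma} together with Lemmas \ref{lemma:Sigamw}--\ref{lem: control density}. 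For the two remaining pieces, where a derivative of $f_{t,y}$ must itself be controlled in $x$ vs.~$x'$, I distinguish two regimes according to whether $|x_1-x_1'|^{1/(1+\beta'_1)}+|x_2-x_2'|^{1/(1+\beta'_2)}$ exceeds $(t-s)^{1/2}$. In the far regime I bound each derivative separately and use the elementary inequality $1\le |x-x'|^{2\alpha_\Phi/(1+\beta'_i)}/(t-s)^{\alpha_\Phi}$ to produce the Hölder factor at the cost of $(t-s)^{-\alpha_\Phi}$. In the close regime I apply a mean-value estimate to $x\mapsto \partial^2_{x_1 x_1}f_{t,y}$ (which is smooth in $x$), introducing $\partial^3_{x_1 x_1 x_i}f_{t,y}$ via \eqref{eq:Dx1x1x2fb} and one extra singular factor $(t-s)^{-(1+\beta_0)/2}$; Lemma \ref{lem: fcirc 1/2} then dominates both $f^\circ(s,x;t,y)$ and $f^\circ(s,x';t,y)$ by $f^{\circ,\frac12}(s,x;t,y)+f^{\circ,\frac12}(s,x';t,y)$, absorbing the translation. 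Balancing the exponents yields the singular factor $(t-s)^{-(1-\eta_\Phi)}$, with the constraints $\alpha_\Phi<\hat\alpha_\Phi\wedge\kappa_0$ ensuring $\eta_\Phi>0$ and $\alpha_\Phi<\min_i(1+\beta'_i)/2$ keeping the Hölder exponents in $[0,1]$.

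For the propagation step, the integral equation \eqref{eq: sol eq Phi} gives
\begin{align*}
\Phi(s,x;t,y)-\Phi(s,x';t,y)
&= \bigl[\Delta_0(s,x;t,y)-\Delta_0(s,x';t,y)\bigr] \\
&\quad + \int_s^t\!\!\int_{\R^2}\bigl[\Delta_0(s,x;r,z)-\Delta_0(s,x';r,z)\bigr]\,\Phi(r,z;t,y)\,dz\,dr.
\end{align*}
Plugging the base estimate into the first bracket and combining it with the pointwise bound \eqref{eq: estime vraie densite} on $\Phi(r,z;t,y)$ in the integral, the $z$-convolution of $f^{\circ,\frac12}(s,\cdot;r,\cdot)$ and $f^\circ(r,\cdot;t,y)$ is again a Gaussian bounded by a constant times $f^{\circ,\frac12}(s,\cdot;t,y)$, because the covariance of $f^{\circ,\frac12}$ was chosen large enough to absorb the extra mass of $f^\circ$. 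The $r$-integral is then handled by the Beta-function identity $\int_s^t(r-s)^{-1+\eta_\Phi}(t-r)^{-1+\kappa_0}\,dr=B(\eta_\Phi,\kappa_0)(t-s)^{-1+\eta_\Phi+\kappa_0}\le C(t-s)^{-1+\eta_\Phi}$, closing the estimate.

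The hard part, in my view, is the close-regime estimate of $\Delta_0$. The third-order derivative contributes a singular factor $(t-s)^{-3(1+\beta_0)/2}$, while the cross-difference $\sigma^2_s(x')-\sigma^2_t(y)$ gives $(t-s)^\alpha$ only modulo the Hölder quantity $|\wr_{s,t}(x',y)|^{2\alpha/(1+\beta'_i)}$ from \eqref{eq : hyp holder coeff bar sigma}, which must be absorbed into $\varpi^1$ via \eqref{eq: borne norme compensee par varpi}. Tracking which of $\beta_0$, $\widehat{\Delta\beta}$ and $(\beta_0+1-2\alpha)^+$ saturates the admissible exponent, and making sure the two-regime split is consistent with the threshold \eqref{eq: hypo dist x x'} required by Lemma \ref{lem: fcirc 1/2}, is the book\-keeping burden that pins down exactly the definition of $\hat\alpha_\Phi$.
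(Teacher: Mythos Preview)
Your outline is correct, and the base estimate on $\Delta_0$ mirrors the paper's proof closely (the four-piece split and the two-regime analysis are exactly what the paper does in its step~(i)). The one place you genuinely differ from the paper is in the propagation from $\Delta_0$ to $\Phi$. The paper iterates the H\"older estimate through the recursion $\Delta_{k+1} = \Delta_0 * \Delta_k$, proving H\"older bounds on each $\Delta_k$ with controlled constants (as in the proof of Proposition~\ref{prop:existence_continuite_Phi}), and then sums. You instead invoke the integral equation \eqref{eq: sol eq Phi} once, feeding the already-established pointwise bound \eqref{eq: estime vraie densite} on $\Phi$ into the kernel and closing via the Beta identity. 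This works: since $f^{\circ} \le 2 f^{\circ,\frac12}$ pointwise, the mixed convolution $\int f^{\circ,\frac12}(s,\cdot;r,z)\, f^{\circ}(r,z;t,y)\,dz$ is dominated by $2f^{\circ,\frac12}(s,\cdot;t,y)$ via Chapman--Kolmogorov for $f^{\circ,\frac12}$, and the time integral $\int_s^t (r-s)^{-1+\eta_\Phi}(t-r)^{-1+\kappa_0}\,dr = B(\eta_\Phi,\kappa_0)(t-s)^{-1+\eta_\Phi+\kappa_0}$ is no more singular than $(t-s)^{-1+\eta_\Phi}$. Your route is shorter because it recycles Proposition~\ref{prop:existence_continuite_Phi} rather than redoing the induction; the paper's induction, by contrast, yields H\"older bounds on every $\Delta_k$ along the way.

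One minor bookkeeping point: your ``direct'' treatment of the coefficient-jump pieces $I_1$ and $I_3$ produces the H\"older factor with exponent $2\alpha/(1+\beta'_i)$ rather than $2\alpha_\Phi/(1+\beta'_i)$, and converting between the two when $|x-x'|$ is not small still requires the far-regime argument. The paper deals with this by applying the far-regime bound (via Lemma~\ref{lem : estime L-Lfy}) to the whole of $\Delta_0$ \emph{before} any splitting into $I_1,\dots,I_4$; you should do the same rather than reserving the regime split for $I_2$ and $I_4$ only.
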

	\begin{proof} In all this proof, $C>0$ denotes a generic constant, whose value can change from line to line, but which does not depend on $(s,x,t,y)\in \Theta$. 
	We set 
	$\Delta_{k} (s,x;t,y)
		:=
		\Deltat_{k} (s,\AMp_{s}x;t,\AMp_{t}y)$ and recall that $\Phi (s,x;t,y)
		:=
		\Phit (s,\AMp_{s}x;t,\AMp_{t}y)$, $(s,x,t,y)\in \Theta$. 

	\vspace{0.5em}

	\noindent $\mathrm{(i)}$ Let us first consider 
		\begin{align}
			I
			~:=~ &
			\Delta_0(s,x; t, y) - \Delta_0(s,x'; t, y)  
			\nonumber \\
			=~&
			\mu_{s}(x) \partial_{x_1} f_{t,y} (s,x; t, y) + \frac12 \big( \sigma^2_{s}(x) - \sigma^2_{t}(y) \big) \partial^2_{x_1 x_1} f_{t,y}(s,x; t,y) \nonumber\\
			&-
			\Big(\mu_{s}(x') \partial_{x_1} f_{t,y}(s,x'; t, y) + \frac12 \big( \sigma^2_{s}(x') - \sigma^2_{t}(y) \big) \partial^2_{x_1 x_1} f_{t,y}(s,x'; t,y) \Big). \label{eq: proof C2 def I}
		\end{align}

	\noindent $\mathrm{(i.1)}$ In the case where
	$$
		|x_1 - x_1'|^{\frac{1}{1+\beta'_1}} + |x_2 - x_2'|^{\frac{1}{1+\beta'_2}} 
		~>~
		(t-s)^{1/2},
	$$
	  Lemma \ref{lem : estime L-Lfy} implies that, for $\alpha'\in (0,\kappa_{0})$,
	\begin{align*}
		&\Big| \Delta_0(s, x; t, y) - \Delta_0(s,x'; t, y) \Big|
		\\
		\le~&
		C_{\eqref{eq: estimee L-Lfy}} \frac{1}{(t-s)^{1-\kappa_0}} \big( f^{\circ}(s,x;t,y)+f^{\circ}(s,x';t,y) \big) \\
		\le~&
		C_{\eqref{eq: estimee L-Lfy}}  \frac{|x_1- x_1'|^{\frac{2\alpha'}{1+\beta'_1}} + |x_2 - x_2'|^{\frac{2\alpha'}{1+\beta'_2}} }{(t-s)^{1 - \kappa_0+\alpha'}}
		\big( f^{\circ}(s,x;t,y)+f^{\circ}(s,x';t,y) \big).
	\end{align*}
		
	\noindent $\mathrm{(i.2)}$ We next consider the case where
	\begin{equation} \label{eq:dx_le_dt}
		|x_1 -x_1'|^{\frac{1}{1+\beta'_1}} + |x_2 -x_2'|^{\frac{1}{1+\beta'_2}} 
		~\le~
		(t-s)^{1/2}.
	\end{equation}
	Let us write
	$$
		I 
		~:=~
		\Delta_0(s,x; t,  y) - \Delta_0(s,x'; t, y) 
		~=~
		I_1 + I_2 + I_3 + I_4,
	$$
	where 
	$$
		I_1 ~:=~ \big( \mu_{s}(x) - \mu_{s}(x') \big) \partial_{x_1} f_{t,y}(s,x; t, y),
	$$
	$$
		I_2 ~:=~  \mu_{s}(x')  \big(  \partial_{x_1} f_{t,y}(s,x; t, y )-  \partial_{x_1} f_{t,y}(s,x'; t, y) \big),
	$$
	$$
		I_3 ~:=~ \frac12 \big( \sigma^2_{s}(x) -  \sigma^2_{s}(x') \big) \partial^2_{x_1 x_1} f_{t,y}(s,x'; t, y),
	$$
	and
	$$
		I_4 ~:=~ \frac12 \big( \sigma^2_{s}( x) - \sigma^2_{t}(y) \big) \Big( \partial^2_{x_1 x_1} f_{t,y}(s,x; t, y) - \partial^2_{x_1 x_1} f_{t,y}(s,x'; t, y) \Big).
	$$
	
	For $I_1$, we use the H\"older continuity property of $\mu$ in \eqref{eq : hyp holder coeff bar mu}, Lemma \ref{lemma:Sigamw} and Lemma  \ref{lem: control density}     to obtain that
	\begin{align}\label{eq: proof C2 borne I1}
		\big| I_1 \big|
		&~\le~
		C\frac{ |x_1 - x_1'|^{\frac{2 \alpha}{1+ \beta'_1}} + | x_2 - x_2 '|^{\frac{2 \alpha}{1+\beta'_2}}}{(t-s)^{\frac{1+\beta_{0}}{2}}} f^{\circ}(s,x; t, y).
	\end{align}

	For $I_2$, let us fix  $\rho \in [0,1]$ and $x'' = \rho x + (1-\rho) x'$ so that, using Assumption \ref{hyp: stand ass}, 
	$$
		\big| I_2 \big| 
		~\le~
		\br\Big(
		\big| \partial^2_{x_1 x_1} f_{t,y}(s,x''; t, y) \big| \big|  x_1 -x_1' \big|
		+
		\big| \partial^2_{x_1 x_2} f_{t,y}(s, x''; t, y) \big| \big| x_2 - x_2' \big| \Big).
	$$
	Using \eqref{eq:Dx1x2fb}, \eqref{eq:Dx1x2fb}, Lemma \ref{lemma:Sigamw},  Lemma \ref{lem: control density}  and the fact that $\beta_{0}\le \beta_{3}$, it follows that
	\begin{align*}
		\big| I_2 \big|
		&\le
		C  
		\Big( 
			\frac{|x_1 - x_1'|}{(t-s)^{1+ \beta_{0}}} 
			+
			 \frac{|x_2 - x_2'|}{(t-s)^{1+ \frac{\beta_0+  \beta_{3}}{2}}}  \Big) f^{\circ}(s,x''; t, y).
	\end{align*}
	Since $x''$ lies in the interval formed by $x$ and $x'$,  Lemma \ref{lem: fcirc 1/2} and \eqref{eq:dx_le_dt} imply that 
	\begin{align}\label{eq: proof C2 borne I2}
		\big| I_2 \big|
		&\le
		C  
		\Big( 
			\frac{|x_1 - x_1'|}{(t-s)^{1+ \beta_{0}}} 
			+
			 \frac{|x_2 - x_2'|}{(t-s)^{1+ \frac{\beta_0+  \beta_{3}}{2}}}  \Big) f^{\circ,\frac12}(s,x; t, y).
	\end{align}
	Next, using the H\"older property of $\sigma$ in \eqref{eq : hyp holder coeff bar sigma}, Lemma \ref{lemma:Sigamw} and Lemma \ref{lem: control density}, it follows that
	\begin{align}\label{eq: proof C2 borne I3}
		&\big| I_3 \big|
		~\le~
		\frac{C }{(t-s)^{1+\beta_0}}
		\Big(
		|x_1 - x_1'|^{\frac{2\alpha}{1+\beta'_1}}  + |x_2 - x_2'|^{\frac{2 \alpha}{1+\beta'_2}} 
		\Big)f^{\circ}(s, x; t,y).
		\end{align}
	
	Finally, $I_4$ is tackled as $I_{2}$. Namely,  we can find  $\tilde x'' = \tilde \rho x + (1-\tilde \rho) x'$ with $\tilde \rho \in [0,1]$   such that
	\begin{align*}
	&\Big| \partial^2_{x_1 x_1} f_{t,y}(s,x; t, y) - \partial^2_{x_1 x_1} f_{t,y}(s,x'; t, y) \Big|\\
	\le~& \big| \partial^3_{x_1 x_1 x_{1}} f_{t,y}(s,\tilde x''; t, y) \big| \big|  x_1 -x_1' \big|
		+
		\big| \partial^3_{x_1x_{1} x_2} f_{t,y}(s, \tilde x''; t, y) \big| \big| x_2 - x_2' \big|
	\\
	\le~& C\left(\frac{ \big|    x_1 -x_1' \big|}{(t-s)^{\frac32(1+\beta_{0})}}+\frac{\big| x_2 - x_2' \big|}{(t-s)^{\frac32+ \beta_{0} +\frac{\beta_{3}}{2}}}\right)
	(\varpi^{1}f^{\circ})(s, \tilde  x''; t,y),
	 \end{align*}
	 in which we used Lemma \ref{lemma:Sigamw} and Lemma \ref{lem: control density} again.
	Next, we appeal to  \eqref{eq : hyp holder coeff bar sigma} to deduce that 
	\begin{align*}
		\big| \sigma^2_{s}(x) - \sigma^2_{t}(y) \big| 
		&\le
		C_{\eqref{eq : hyp holder coeff bar sigma}} 
		\Big(|t-s|^{\alpha}+ |  w_{1}|^{\frac{2 \alpha}{1+\beta'_1}} + | w_{2}|^{\frac{2 \alpha}{1+\beta'_2}} \Big) .
	 \end{align*}
	Using that $\beta_{3}\ge \beta_{0}$,   the condition \eqref{eq:dx_le_dt} together with Lemma \ref{lem: fcirc 1/2} and  the fact that $\tilde x''$ lies on the interval formed by $x$ and $x'$ implies that 
	\begin{align}
		|I_{4}|
		&~\le~
		C  |t-s|^{\alpha} \left( \frac{ \big|    x_1-x'_{1}   \big|}{(t-s)^{\frac32(1+\beta_{0})}}+\frac{\big| x_2 - x_2' \big|}{(t-s)^{\frac32+ \beta_{0} +\frac{\beta_{3}}{2}}}\right) 	 f^{\circ,\frac12}(s, x; t,y).\label{eq: proof C2 borne I4} 
	\end{align}
	Note that there exists $C>0$, that does not depend on $(s,   x,  t,  y)$ such that 
	$$
		f^{\circ}(  s,    x;   t,  y)
		~\le~
		Cf^{\circ,\frac12}(  s,    x;   t,  y).
	$$
	Thus, combining \eqref{eq: proof C2 borne I1}-\eqref{eq: proof C2 borne I4} and recalling \eqref{eq: proof C2 def I} and \eqref{eq: majo fcirc par fcirc 1/2}  leads to 
	a upper bound for 
	$$
		J:=\frac{|I|}{C \big( f^{\circ,\frac12}(s, x; t,y)+f^{\circ,\frac12}(s, x'; t,y) \big) }.
	$$
	Namely, 
	\begin{align*}
		J~\le~& \frac{ |x_1 - x_1'|^{\frac{2 \alpha}{1+ \beta'_1}} +|  x_2 - x_2 '|^{\frac{2 \alpha}{1+ \beta'_2}}}{(t-s)^{1+\beta_0}}\\
		 &+~ | x_1 - x_1'|\left(\frac1{(t-s)^{1+\beta_{0} }}+\frac{1}{(t-s)^{\frac32(1+\beta_{0})-\alpha}}\right)\\
		 &+~ |x_2 -   x_2'|\left(\frac1{(t-s)^{1+\frac{\beta_{0}+  \beta_{3}}{2}}}+\frac{1}{(t-s)^{\frac32+\beta_{0}+\frac{\beta_{3}}{2}-\alpha}}\right).
	\end{align*}
	We then use that $(t-s)^{\frac{1+\beta'_{i}}{2}}/|x_{i}-x'_{i}|\ge 1$, for $i=1,2$, to deduce that, for $0 < \alpha'  \le \alpha\wedge \min\limits_{i=1,2} \frac{1+\beta'_{i}}{2}$, 
	\begin{align*}
		J
		~\le~& \frac{ |x_1 - x_1'|^{\frac{2 \alpha'}{1+ \beta'_1}} +|  x_2 - x_2 '|^{\frac{2 \alpha'}{1+ \beta'_2}}}{(t-s)^{1+\beta_0+\alpha'-\alpha}}\\
		 &+~ \frac{ | x_1 -  x_1'|^{\frac{2 \alpha'}{1+ \beta'_1}}}{(t-s)^{(1+\beta_{0})\vee(\frac32(1+\beta_{0})-\alpha)-\frac{1+\beta'_{1}}{2}+\alpha'}}+\frac{ | x_2 - x_2'|^{\frac{2 \alpha'}{1+ \beta'_2}} }{(t-s)^{(1+\frac{\beta_{0}+  \beta_{3}}{2})\vee(\frac32+\beta_{0}+\frac{\beta_{3}}{2}-\alpha)-\frac{1+\beta'_{2}}{2}+\alpha'}}.
	\end{align*}
	Since $\beta_{0}\ge \beta_{1}$ and $\beta_{3}\ge \beta_{2}$,
	\begin{align*}
		J&\le  \frac{|x_1 -  x_1'|^{\frac{2 \alpha'}{1+ \beta'_1}}}{(t-s)^{(\frac12+\beta_{0}+\frac{\beta_{0}-\beta_{1}}{2})\vee(1+\frac32\beta_{0}+\frac{\beta_{0}-\beta_{1}}{2}-\alpha)+\alpha'}} +  \frac{|x_2 -   x_2'|^{\frac{2 \alpha'}{1+ \beta'_2}}}{(t-s)^{(\frac12+\beta_{0}+\frac{  \beta_{3}-\beta_{2}}{2})\vee(1+\frac32 \beta_{0}+\frac{\beta_{3}-\beta_{2}}{2}-\alpha)+\alpha'}}.
	\end{align*}

	\noindent $\mathrm{(i.3)}$ We now combine  the results of steps $\mathrm{(i.1)}$ and $\mathrm{(i.2)}$ to deduce that,
	when $\alpha'= \alpha_\Phi \in  (0,\hat \alpha_\Phi\wedge \kappa_{0})$,
	\begin{align*}
		 |I|
		 &~\le~
		 C  \frac{|x_1 -  x_1'|^{\frac{2 \alpha_\Phi}{1+ \beta'_1}}+|x_2 -   x_2'|^{\frac{2 \alpha_\Phi}{1+ \beta'_2}}}{(t-s)^{1-\eta_{\Phi}}}\left(f^{\circ,\frac12}(s, x; t,y)+f^{\circ,\frac12}(s, x'; t,y)\right).
	\end{align*}
 
 $\mathrm{(ii)}$ To conclude, it remains to use an induction argument as in the end of the proof of Proposition \ref{prop:existence_continuite_Phi}.
	\end{proof}

 \subsubsection{Smoothness of the transition density and Feynman-Kac's representation}\label{sec : Smoothness of the transition density and Feynman-Kac representation}

 	Recall that $\fr(s, \xr; t, y)$ is defined in \eqref{eq: def f bar on path}. 
 
	\begin{Proposition} \label{prop:fr_C12}
		{Let the conditions of Theorem \ref{thm:c12} hold.} Then, the vertical derivative $\partial^2_{\xr \xr} \fr(s, \xr; t, y)$ and horizontal derivative $\partial_s \fr(s, \xr; t, y)$ are well-defined for all $0 \le s < t \le T$, $\xr \in D([0,T])$ and $y \in \R^2$.
		Moreover, for all $(t,y) \in [0,T] \x \R^2$,  $\partial^2_{\xr \xr} \fr(\cdot; t, y)$ and $\partial_s \fr(\cdot; t, y)$ are continuous on $[0,t) \x C([0,T])$.
	\end{Proposition}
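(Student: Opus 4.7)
We work with the parametrix decomposition $\fr(s,\xr;t,y) = \fr_{t,y}(s,\xr;t,y) + \Rc(s,\xr;t,y)$, where
\begin{equation*}
\Rc(s,\xr;t,y) := \int_s^t\!\!\int_{\R^2}\!\! \fr_{r,z}(s,\xr;r,z)\,\Phi(r,z;t,y)\,dz\,dr,
\end{equation*}
and treat the two pieces separately. The parametrix kernel $\fr_{t,y}$ belongs to $\Cb^{1,2}([0,t))$ by Remark \ref{rem:PPDE_f_rz}, with $\partial_s\fr_{t,y} = -\tfrac12\sigma_t(y)^2\partial^2_{\xr\xr}\fr_{t,y}$, so the required derivatives exist and are continuous on $[0,t)\x D([0,T])$. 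The whole substance of the argument therefore bears on the remainder $\Rc$.

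For each fixed $r\in(s,t)$, set $G_r(s,\xr) := \int_{\R^2} \fr_{r,z}(s,\xr;r,z)\,\Phi(r,z;t,y)\,dz$ and view $h(z):=\Phi(r,z;t,y)$ as a weight. Lemma \ref{lem: PHI holder} shows that $h$ is H\"older with exponents $\tfrac{2\alpha_\Phi}{1+\beta'_i}$ ($i=1,2$) and H\"older constant at most $C(t-r)^{-(1-\eta_\Phi)}$ times the envelope $h_\circ(z):=f^{\circ,\frac12}(r,z;t,y)$. Under the conditions of Theorem \ref{thm:c12}, both $\eta_\Phi>0$ and $\kappa_h:=\min\bigl(\tfrac{2\beta_4+1+\beta'_1}{1+\beta'_2},1\bigr)\min\{\alpha_\Phi,\alpha\}-\beta_0$ are strictly positive (the latter specialising the standing hypothesis to $\alpha_h=\alpha_\Phi$). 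Lemma \ref{lem : borne derive seconde fbar}, lifted to path-dependent functionals via Remark \ref{rem:dt_Vsxr} (and using the PPDE identity $\partial_s\fr_{r,z} = -\tfrac12\sigma_r(z)^2\partial^2_{\xr\xr}\fr_{r,z}$ from Remark \ref{rem:PPDE_f_rz} for the horizontal derivative), then furnishes both $\partial^2_{\xr\xr}G_r(s,\xr)$ and $\partial_s G_r(s,\xr)$ with a uniform bound of the form
\begin{equation*}
\bigl|\partial^2_{\xr\xr} G_r(s,\xr)\bigr| + \bigl|\partial_s G_r(s,\xr)\bigr| \;\le\; \frac{C\,e^{C(|\xr(s)|+|I_s(\xr)|)}}{(r-s)^{1-\kappa_h}\,(t-r)^{1-\eta_\Phi}}.
\end{equation*}

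Since $\kappa_h,\eta_\Phi>0$, the Euler Beta integral $\int_s^t (r-s)^{-(1-\kappa_h)}(t-r)^{-(1-\eta_\Phi)}dr$ is finite, so dominated convergence gives $\partial^2_{\xr\xr}\Rc(s,\xr;t,y) = \int_s^t \partial^2_{\xr\xr}G_r(s,\xr)\,dr$, together with joint continuity on $[0,t)\x C([0,T])$ (using continuity of $\Phi$ from Proposition \ref{prop:existence_continuite_Phi}, continuity of the Gaussian kernel $\fr_{r,z}$ in $(s,\xr)$, and continuity of $(s,\xr)\mapsto(\xr(s),I_s(\xr))$ on $[0,T]\x C([0,T])$). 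For the horizontal derivative, decompose
\begin{equation*}
\tfrac{\Rc(s+h,\xr(s\wedge\cdot);t,y)-\Rc(s,\xr;t,y)}{h} = -\tfrac{1}{h}\!\int_s^{s+h}\!\!G_r(s,\xr;t,y)\,dr + \tfrac{1}{h}\!\int_{s+h}^t\![G_r(s+h,\xr(s\wedge\cdot);t,y)-G_r(s,\xr;t,y)]\,dr.
\end{equation*}
The second term converges to $\int_s^t \partial_s G_r(s,\xr;t,y)\,dr$ by the same dominated convergence, while the first tends to $-\lim_{r\downarrow s} G_r(s,\xr;t,y)$; by the classical parametrix mollification property (the Gaussian $z\mapsto f_{r,z}(s,\xr(s),I_s(\xr);r,z)$ has mean $(\xr(s),I_s(\xr)+\xr(s)(A_r-A_s))$ and vanishing variance as $r\downarrow s$) combined with continuity of $\Phi$ on $\Theta$, this limit equals $-\Phi(s,\xr(s),I_s(\xr);t,y)$. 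Continuity on $[0,t)\x C([0,T])$ follows once more by dominated convergence.

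The delicate step is the horizontal derivative: the fiber-wise bound is singular at both endpoints of $(s,t)$, so the interchange of $\partial_s$ with the improper $r$-integral is tight, and the lower-boundary contribution must be identified through the mollification property of the parametrix, which itself relies on joint continuity of $\Phi$. The quantitative conditions of Theorem \ref{thm:c12} — notably $\alpha_\Phi<\hat\alpha_\Phi\wedge\kappa_0\wedge\min_i\tfrac{1+\beta'_i}{2}$ and $\min\bigl(\tfrac{2\beta_4+1+\beta'_1}{1+\beta'_2},1\bigr)\min\{\alpha_\Phi,\alpha_\ell,\alpha\}-\beta_0>0$ — are exactly what make $\eta_\Phi$ and $\kappa_h$ simultaneously strictly positive, and hence what makes all the integrability arguments above converge.
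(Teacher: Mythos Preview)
Your overall strategy is the same as the paper's---use the H\"older regularity of $\Phi$ (Lemma \ref{lem: PHI holder}) together with the potential estimate (Lemma \ref{lem : borne derive seconde fbar}) to control the second vertical derivative of the remainder integral, and then pass to the horizontal derivative via \eqref{eq:bound_ds_fr}. However, there is a real gap in your uniform bound
\[
\bigl|\partial^2_{\xr\xr} G_r(s,\xr)\bigr| + \bigl|\partial_s G_r(s,\xr)\bigr| \;\le\; \frac{C\,e^{C(|\xr(s)|+|I_s(\xr)|)}}{(r-s)^{1-\kappa_h}\,(t-r)^{1-\eta_\Phi}}.
\]
Lemma \ref{lem : borne derive seconde fbar} does not give this. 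Its right-hand side contains, in addition to the $C_h$-term, the pointwise values $|h(\Eb^{-1}_{s,r}(x))|$ and $|h_\circ(\Eb^{-1}_{s,r}(x))|$, which here are $|\Phi(r,\Eb^{-1}_{s,r}(x);t,y)|$ and $f^{\circ,\frac12}(r,\Eb^{-1}_{s,r}(x);t,y)$. When $x=\Eb_{s,t}(y)$ (equivalently $\wr_{s,t}(x,y)=0$), one checks that $\Eb^{-1}_{s,r}(x)=\Eb_{r,t}(y)$, so $\wr_{r,t}(\Eb^{-1}_{s,r}(x),y)=0$ for every $r$, and therefore $f^{\circ,\frac12}(r,\Eb^{-1}_{s,r}(x);t,y)\sim \det(\Sigma_{r,t})^{-1/2}$ blows up like a negative power of $(t-r)$ strictly worse than $-1$. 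Your Beta-integral argument then fails precisely at those $(s,\xr,t,y)$, and the proposition asserts existence for \emph{all} such quadruples.

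The paper avoids this by inserting an intermediate point $t_0\in(s,t)$ and splitting $\Rc=\fr_1+\fr_2$ with $\fr_1=\int_s^{t_0}(\cdots)$ and $\fr_2=\int_{t_0}^t(\cdots)$. On $\fr_1$ one applies Lemma \ref{lem : borne derive seconde fbar} exactly as you do, but now $r\le t_0<t$ keeps the problematic pointwise Gaussian values bounded. On $\fr_2$ one does \emph{not} invoke Lemma \ref{lem : borne derive seconde fbar}; since $r\ge t_0>s$, the crude bound $|\partial^2_{\xr\xr}\fr_{r,z}(s,\xr;r,z)|\le C\fr^\circ(s,\xr;r,z)$ holds with a constant depending only on $t_0-s$, and one integrates it directly against the estimate $|\Phi(r,z;t,y)|\le C(t-r)^{-(1-\kappa_0)}f^\circ(r,z;t,y)$ from Proposition \ref{prop:existence_continuite_Phi}, yielding $(t-r)^{-(1-\kappa_0)}f^\circ(s,x;t,y)$ after Chapman--Kolmogorov, which is integrable near $r=t$. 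Your explicit identification of the boundary term $-\Phi(s,x;t,y)$ in the horizontal derivative is a nice addition that the paper leaves implicit, but the main argument needs the $t_0$-split to go through.
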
 
	
	\begin{proof} We denote by $C>0$ a generic constant that does not depend on $(s,x, t,y)$.
	Let us fix $t_0 \in (s, t)$, then by \eqref{eq:def_f_transition_proba} and  \eqref{eq: def f bar on path},
	\begin{align*} 
		\fr(s, \xr; t, y)
		~:=~&
		\fr_{t,y}(s, \xr; t,y)
		+
		\int_s^{t_0}\!\! \int_{\R^2} \fr_{r,z}(s, \xr; r,z) \Phi(r,z; t,y) dz dr \\
		&+
		\int_{t_0}^t \int_{\R^{2}} \fr_{r,z}(s, \xr; r,z) \Phi(r,z; t,y) dz dr \\
		~=:~&
		\fr_{t,y}(s, \xr; t,y)
		~+~
		\fr_1(s, \xr; t,y)
		~+~
		\fr_2(s, \xr; t,y).
	\end{align*}

	First, the existence and continuity of the vertical derivative and horizontal derivative of $\fr_{t,y}(s, \xr; t,y)$ is trivial.

	\vspace{0.5em}
	
	For $\fr_1(s, \xr; t,y)$,   we can use  Lemmas \ref{lem : borne derive seconde fbar} and \ref{lem: PHI holder}, {Proposition \ref{prop:existence_continuite_Phi}}, 
	together with \eqref{eq: majo fcirc par fcirc 1/2}, to obtain that
	$$
		\int_s^{t_0}\!\! \left| \int_{\R^2} \partial^2_{\xr \xr } \fr_{r,z}(s, \xr; r,z) \Phi(r,z; t,y) dz \right|  dr  
		~\le~
		C \int_s^{t_0} \frac{I_{1}(s, \xr; r; t,y)  +I_{2}(s, \xr; r; t,y) }{(r-s)^{1-\kappa_{\Phi}} } dr,
	$$ 
	where
	$$
			\kappa_\Phi
			~:=~ 
			\min \Big(\frac{2\beta_{4}+1+\beta'_{1}}{1+\beta'_{2}}, 1 \Big)\min\{ \alpha_\Phi,\alpha\}- \beta_0
			~>~
			0,
	$$
	and, with $x := (\xr(s), I_s(\xr))$,
	\begin{align*}
		I_{1}(s,\xr; r; t,y) ~:=~& \int_{\R^{2}}  f^{\circ,\frac12}(s, x;r,z) \big( f^{\circ,\frac12}(r, \Eb^{-1}_{s,r}(x);t,y)+  f^{\circ,\frac12}(r,z;t,y)  \big) dz
		\\
		~=~&  f^{\circ,\frac12}(r, \Eb^{-1}_{s,r}(x);t,y)+  f^{\circ,\frac12}(s,x;t,y) \\
		I_{2}(s,\xr; r; t,y)
		~:=~&f^{\circ,\frac12}(r,\Eb^{-1}_{s,r}(x);t,y).  
	\end{align*}
	Since $t_0 < t$, we can then easily obtain the existence and continuity of $\partial^2_{\xr \xr} \fr_1(\cdot; t, y)$ by dominated convergence.
	Further, in view of Remark \ref{rem:dt_Vsxr} and in particular \eqref{eq:bound_ds_fr}, 
	we can also deduce the existence and continuity of the horizontal derivative $\partial_s \fr_1(\cdot; t,y)$.
	
	\vspace{0.5em}
	
	For $\fr_2(s, \xr; t, y)$, we notice that 
	$$
		\big| \partial_s \fr_{r,z}(s, \xr; r,z) \big|
		+
		\big| \partial^2_{\xr\xr} \fr_{r,z}(s, \xr; r,z) \big|
		~\le~
		C \fr^{\circ}(s, \xr; r,z), 
		~
		\mbox{for}~r \ge t_0 > s, ~z \in \R^2.
	$$
	Together with the estimate on $\Phi(r,z; t, y)$ in Proposition \ref{prop:existence_continuite_Phi},
	it follows the existence and continuity of the vertical derivative $\partial^2_{\xr\xr} \fr_2(\cdot; t, y)$ and the horizontal derivative $\partial_s \fr_2(\cdot; t,y)$.
	\end{proof}

	{Recall the growth condition \eqref{eq:bound_g_l} on $\ell$ and $g$, and the H\"older continuity condition \eqref{eq:holder_l} on $\ell$.
	Let}
	$$
		v(s,x) ~:=~ \int_s^T\!\! \int_{\R^2} \ell(t,y) f(s, x; t, y) dy dt + \int_{\R^2} g(y) f(s, x; T, y) dy,
		~~(s,x) \in [0,T) \x \R^2.
	$$
	Then, with $\vr$ defined in \eqref{eq:def_vr}, one has, for $x = (\xr(s), I_s(\xr))$,
	$$
		\vr(s,\xr) = v(s,x),
		~
		\partial_{\xr} \vr(s,\xr) = \partial_{x_1}v(s,x)
		~\mbox{and}~
		\partial^2_{\xr\xr} \vr(s,\xr) = \partial^2_{x_1 x_1}v(s,x).
 	$$

	\begin{Proposition}\label{prop:vr_C12_edp}
		{Let the conditions of Theorem \ref{thm:c12} hold. Then:}
		
		\vspace{0.5em}
		
		\noindent $\mathrm{(i)}$  $\vr\in \Cb^{1,2}([0,T))$ and the bound estimates in \eqref{eq:bound_dvr} hold true.
		
		\vspace{0.5em}

		\noindent $\mathrm{(ii)}$ The function $\vr$ is a classical solution to the PPDE \eqref{eq:ppde}.
		If in addition $g$ is continuous, {then $\vr$ is the unique classical solution of  \eqref{eq:ppde} satisfying \eqref{eq: borne croissance vr et cond bord}.}
	\end{Proposition}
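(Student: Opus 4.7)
The plan is to establish (i) the $\Cb^{1,2}$-regularity and the bound estimates \eqref{eq:bound_dvr} by differentiating under the integrals in \eqref{eq:def_vr}, (ii) the PPDE by exploiting the integral equation \eqref{eq: sol eq Phi} for $\Phi$ together with the parametrix identity, and (iii) uniqueness through a comparison argument based on It\^o-Dupire's formula.

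For the vertical derivative $\partial_\xr \vr$, I would invoke Theorem \ref{thm:f_well_defined}.(ii), which yields the domination $|\partial_{x_1} f(s,x;t,y)|\le C(t-s)^{\kappa_1-1}f^\circ(s,x;t,y)$ with $\kappa_1>0$. Combined with the exponential growth \eqref{eq:bound_g_l} of $\ell$ and $g$ and the Gaussian integrability of $y\mapsto e^{C|y|}f^\circ(s,x;t,y)$, this permits differentiating under both integrals in \eqref{eq:def_vr} via dominated convergence. The running-cost integral contributes a bounded $\int_s^T(t-s)^{\kappa_1-1}dt$, while the terminal term $\int g(y)\partial_{x_1}f(s,x;T,y)dy$ drives the singularity $(T-s)^{\kappa_1-1}$, yielding the first estimate in \eqref{eq:bound_dvr}.

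For $\partial^2_\xr \vr$ and $\partial_s \vr$, Proposition \ref{prop:fr_C12} provides pointwise existence and continuity of the corresponding derivatives of $\fr(\cdot;t,y)$, so the task is to interchange differentiation with the $(t,y)$-integration. This is the main technical obstacle, since the naive pointwise bound on $\partial^2_{x_1x_1}f_{t,y}$ is singular of order $(t-s)^{-1-\beta_0}$ and not integrable in $t$. I would resolve it by the same decomposition as in Proposition \ref{prop:fr_C12}, writing $f = f_{t,y} + (\text{integral in }\Phi)$, and applying Lemma \ref{lem : borne derive seconde fbar} to $V(s,x;t) := \int \ell(t,y)f_{t,y}(s,x;t,y)dy$ and to $\int g(y)f_{T,y}(s,x;T,y)dy$. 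The H\"older hypothesis \eqref{eq:holder_l} on $\ell$ (and continuity together with growth of $g$) ensures $\kappa_h > 0$, so the second-derivative bound becomes $(t-s)^{\kappa_h-1}$ and is integrable in $t$. For the $\Phi$-correction term, Lemma \ref{lem: PHI holder} supplies the H\"older control in $x$ required to apply Lemma \ref{lem : borne derive seconde fbar} with $h$ being $\Phi(r,\cdot;t,y)$, and the bound $(r-s)^{\eta_\Phi-1}$ is again integrable. Summing the contributions and tracking the blow-up at $t = T$ gives the second estimate in \eqref{eq:bound_dvr}. The horizontal derivative $\partial_s \vr$ is then obtained from the PPDE relation $\partial_s\vr = -\mub\partial_\xr \vr - \frac12\sigmab^2\partial^2_\xr \vr - \bar\ell$, inheriting the same bound.

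For the PPDE itself, I would use the change of variables $\vr(s,\xr) = v(s,\xr(s), I_s(\xr))$ and verify that $v$ satisfies $\partial_s v + \mu\partial_{x_1}v + \frac12 \sigma^2 \partial^2_{x_1x_1}v + \ell = 0$ in the classical sense, equivalently that its $\AMp$-transform solves the backward equation associated with $\Lct$. Applying $\partial_s + \Lct$ to the integral representation of $\tilde f$ and using both the frozen parametrix identity $(\partial_s + \Lct^{r,z})\ft_{r,z} = 0$ and the integral equation \eqref{eq: sol eq Phi} for $\Phit$, the $\Phi$-term integrates precisely into the kernel contribution, leaving exactly the source term $-\ell$. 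The terminal condition $\lim_{s\nearrow T}\vr(s,\xr) = \bar g(\xr)$ for continuous $g$ follows from the Gaussian bound \eqref{eq:up_bound_f} and the standard delta-approximation of the parametrix as $t-s\downarrow 0$. Finally, uniqueness under the growth condition \eqref{eq: borne croissance vr et cond bord} is proved by considering the difference $\vr-\vr'$ of two solutions: it is a $\Cb^{1,2}([0,T))$ solution of the homogeneous PPDE with zero terminal data and exponential-type growth, and applying It\^o-Dupire's formula to $\vr-\vr'$ along any weak solution of \eqref{eq:def_Xt} with coefficients mollified to guarantee weak existence, then localizing and passing to the limit using the exponential moments of the Gaussian-dominated process, shows that $\vr\equiv\vr'$.
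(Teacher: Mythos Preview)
Your outline follows the paper's strategy closely, but there is one genuine gap and one potential circularity worth flagging.

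\textbf{The terminal $g$-term.} You propose to apply Lemma~\ref{lem : borne derive seconde fbar} to $\int_{\R^2} g(y)\, f_{T,y}(s,x;T,y)\,dy$, writing that ``continuity together with growth of $g$'' suffices. It does not: Lemma~\ref{lem : borne derive seconde fbar} requires a H\"older modulus on $h$, and $g$ is only assumed to satisfy the growth bound \eqref{eq:bound_g_l}. The paper does \emph{not} invoke that lemma here; instead it uses the naive pointwise bound $|\partial^{2}_{x_{1}x_{1}} f_{T,y}(s,x;T,y)|\le C(T-s)^{-1-\beta_{0}} f^{\circ}(s,x;T,y)$ coming from \eqref{eq:Dx1x2fb} and Lemma~\ref{lemma:Sigamw}, and then integrates against $|g(y)|$. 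No time-integration is involved for this single term, so the non-integrable-looking exponent is harmless, and this is precisely the origin of the $(T-s)^{-1-\beta_{0}}$ in \eqref{eq:bound_dvr}.

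\textbf{The horizontal derivative.} Deducing the bound on $\partial_{s}\vr$ ``from the PPDE relation $\partial_{s}\vr=-\mub\,\partial_{\xr}\vr-\tfrac12\sigmab^{2}\partial_{\xr}^{2}\vr-\bar\ell$\,'' is circular as stated: you need existence of $\partial_{s}\vr$ to assert $\vr\in\Cb^{1,2}$ before you can claim the PPDE holds. The paper instead appeals to the \emph{frozen-coefficient} identity \eqref{eq:bound_ds_fr}, i.e.\ $\partial_{s}\fr_{t,y}=-\tfrac12\sigma_{t}(y)^{2}\partial^{2}_{\xr\xr}\fr_{t,y}$, which transfers the second-order bounds to $\partial_{s}$ term by term under the integral, prior to establishing the PPDE.

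\textbf{Organizational difference.} For the $\Phi$-correction you plan to apply Lemma~\ref{lem : borne derive seconde fbar} with $h=\Phi(r,\cdot\,;t,y)$ for each fixed $(t,y)$ and integrate afterwards. This can be made to work, but the paper instead first aggregates by defining $v_{\Phi}(r,z):=\int_{r}^{T}\!\!\int\Phi(r,z;t,y)\ell(t,y)\,dy\,dt+\int\Phi(r,z;T,y)g(y)\,dy$, rewrites $v(s,x)=\int f_{T,y}(s,x;T,y)g(y)\,dy+\int_{s}^{T}\!\!\int f_{r,z}(s,x;r,z)\bigl(v_{\Phi}+\ell\bigr)(r,z)\,dz\,dr$, and then applies Lemma~\ref{lem : borne derive seconde fbar} once with $h=v_{\Phi}+\ell$ (whose H\"older regularity follows from Lemma~\ref{lem: PHI holder} and \eqref{eq:holder_l}). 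This yields the integrable singularities $(r-s)^{\kappa_{\Phi}-1}(T-r)^{\eta_{\Phi}-1}$ and $(r-s)^{\kappa_{\ell}-1}$ directly; your exponent ``$(r-s)^{\eta_{\Phi}-1}$'' should rather be $(r-s)^{\kappa_{\Phi}-1}$ in the notation of the paper. The remaining parts of your plan (the PPDE via \eqref{eq: sol eq Phi}, the terminal condition, and uniqueness by It\^o--Dupire verification) match the paper.
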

	\begin{proof}
	$\mathrm{(i)}$ 
	Let us define, for $(r,z) \in [0,T) \x \R^2$, 
	\begin{align}\label{eq: def v bar Phi} 
		v_{ \Phi}(r,z)
		~:=
		\int_r^T \int_{\R^2} \Phi(r,z; t,y) \ell(t,y) dy dt 
		~+
		\int_{\R^{2}} \Phi(r,z;T,y)g(y)dy ,
	\end{align}
	so that
  	\begin{align} \label{eq : partial x1 x1 v(s,barx)}
		v(s,x) ~=&
		\int_{\R^2} f_{T,y} (s,x; T,y) g(y) dy ~+ \int_s^T \int_{\R^2} f_{r,z}(s,x; r,z) \big( v_{\Phi}(r,z) + \ell(r,z) \big) dz dr.
  	\end{align}
	Then, it follows from Lemma \ref{lem: PHI holder} that 
  	\begin{align*}
		| v_{ \Phi}(r,z)- v_{\Phi}(r,z')|
		\le
		C_{\alpha_\Phi}
			\frac{|z_1-z_1'|^{\frac{2\alpha_\Phi}{1+\beta'_1}} + |z_2 - z_2'|^{\frac{2\alpha_\Phi}{1+\beta'_2}}}{(T-r)^{1 - \eta_{\Phi} }}
			\big(  v^{\circ,\frac12}(r,z)+ v^{\circ,\frac12} (r,z') \big),
  	\end{align*}
	in which
	$$ 
		v^{\circ,\frac12}(r,z)
		~:=~
		\int_r^T \int_{\R^2} f^{\circ,\frac12}(r, z;T,  y) \big| \ell(t,y) \big| dy dt 
		+
		\int_{\R^{2}} f^{\circ,\frac12}(r, z;T,  y) \big|g(y) \big| dy.
	$$
{Together with the H\"older continuity condition on $\ell$ in \eqref{eq:holder_l}, we can then apply Lemma \ref{lem : borne derive seconde fbar}
	to deduce that $\partial^2_{x_1 x_1} v(s,x)$ exists and
	\begin{align*} 
		&~\partial^{2}_{x_{1}x_{1}} v(s,x) \\
		=&
		\int_{\R^{2}} \! \partial^{2}_{x_{1}x_{1}}  f_{T,y}(s,x;T,y)g(y)dy 
		+\!
		\int_{s}^{T} \!\!\!  \int_{\R^{2}} \! \partial^{2}_{x_{1}x_{1}} f_{r,z}(s,x;r,z)  \big( v_{\Phi}(r,z) + \ell(r.z) \big) dzdr.
	\end{align*}

	Then, using \eqref{eq:Dx1x2fb} and Lemma \ref{lemma:Sigamw}, we deduce that, for some constant $C > 0$,
	$$
		\int_{\R^{2}} \! \Big| \partial^{2}_{x_{1}x_{1}}  f_{T,y}(s,x;T,y)g(y) \Big| dy 
		~\le~
		\frac{C}{(T-s)^{1+\beta_0}} \int_{\R^2} f^{\circ}(s, x; T,y) | g(y)| dy
		~\le~
		\frac{C e^{C|x|} }{(T-s)^{1+\beta_0}}.
	$$
	By Lemma \ref{lem : borne derive seconde fbar}, one can choose $C> 0$ such that,  
	\begin{align*}
		&\left| \int_{s}^{T} \!\!\!  \int_{\R^{2}} \! \partial^{2}_{x_{1}x_{1}} f_{r,z}(s,x;r,z)  \ell(r.z) dzdr \right| \\
		\le~&
		\int_s^T \frac{C}{(r-s)^{1-\kappa_{\ell}}} \Big( \big| \ell( \Eb^{-1}_{s,r}(x)) \big| + C e^{| \Eb^{-1}_{s,r}(x)|} + \int_{\R^2} f^{\circ}(s,x; r,z) e^{C|z|} dz \Big) dr
		~\le~
		C e^{C|x|},
	\end{align*}
		{in which
			$$
			\kappa_\ell
			~:=~ 
			\min \Big(\frac{2\beta_{4}+1+\beta'_{1}}{1+\beta'_{2}}, 1 \Big)\min\{ \alpha_\ell,\alpha\}- \beta_0
			~>~
			0,
		$$
		}
	and
	\begin{align*}
		\left| \int_{s}^{T} \!\!\!  \int_{\R^{2}} \! \partial^{2}_{x_{1}x_{1}} f_{r,z}(s,x;r,z)  v_{\Phi} (r.z) dzdr \right|
		\le
		\int_s^T \frac{C}{(r-s)^{1-\kappa_{\Phi}} (T-r)^{1-\eta_{\Phi}} } e^{C|x|} dr
		~\le~
		C e^{C|x|}.
	\end{align*}
	This proves the bound estimate on $\partial^2_{x_1x_1} v(s,x)$ (or equivalently $\partial^2_{\xr \xr} \vr(s, \xr)$) in \eqref{eq:bound_dvr}.
	In view of   \eqref{eq:bound_ds_fr}, one can obtain the same bound   on $\partial_s \vr(s, \xr)$ in  \eqref{eq:bound_dvr}.
	Finally,  $\partial_{\xr} \vr(s, \xr)$ is estimated by appealing to  Proposition \ref{prop : fb C1} and   \eqref{eq:bound_g_l}. {The bound on the right-hand side of \eqref{eq: borne croissance vr et cond bord} is proved similarly.}
}

	\vspace{0.5em}
	
	$\mathrm{(ii)}$ 
	Recall that  
	\begin{align*}
	 	 \fr(s,\xr;t,y) 
		 ~=~
		 \fr_{t, y}(s,\xr;t,y)
		 ~+
		 \int_{s}^{t}\int_{\R^{2}} \fr_{r, z}(s,\xr;r,z) \Phi(r,z,t,y)dzdr,
	\end{align*}
	and that $(s,\xr)\in [0,t)\x D([0,T])\mapsto  \fr_{t,y}(s,\xr;t,y)$ solves 
	\begin{align}\label{eq: LcircE bar fE =0} 
		{\rm L}_{t, y} \fr_{t, y}(\cdot;t,y)=0 \mbox{ on $[s,t)\x C([0,T])$,}
	\end{align}
	where 
	$$
		 {\rm L}_{t, y} ~:=~  \partial_s  + \frac12\sigma_{t}(  y)^{2} \partial^{2}_{\xr \xr}.
 	$$
	Let
	$$
		{\rm L} \phi(s,\xr)  
		~:=~
		 \partial_s \phi(s, \xr) + \mu_{s}(\xr) \partial_{\xr}\phi(s,\xr)+\frac12\sigma_{s}^{2}(\xr)\partial^{2}_{\xr}\phi(s,\xr),
	$$
	for $\phi \in \Cb^{1,2}([0,T))$.
	Recalling the definition of   $v_{ \Phi}$   in \eqref{eq: def v bar Phi} and using \eqref{eq : partial x1 x1 v(s,barx)}, we obtain that,  with $x := (\xr(s), I_s(\xr))$, 
	\begin{align}
		{\rm L}  \vr(s,\xr)
		=& \int_{\R^{2}}{\rm L}  \fr_{ {T},y}(s,\xr;T,y)g(y)dy
		- v_{ \Phi}(s, x) {- \ell(s,x)} \nonumber \\
		&+
		 \int_{s}^{T} \!\! \int_{\R^{2}} {\rm L}  \fr_{r, z}(s,\xr;r,z) \big( v_{ \Phi}(r,z) { + \ell(r,z)} \big) dz dr.
		 \label{eq: Lxr tilde vr}
	\end{align}
	At the same time, as a consequence of \eqref{eq: sol eq Phi} and \eqref{eq: lien derive f bar f}-\eqref{eq: lien derive seconde f bar f}, 
	we observe that
	\begin{align*}
		\Phi(s,x;T,y)
		=~&
		({\rm L} -{\rm L}_{t, y})  \fr_{t, y}(s,\xr; t,y) \\
		&+
		\int_{s}^{t} \! \int_{\R^{2}}({\rm L} -{\rm L}_{r,z})  { \fr_{r,z}}(s,\xr;r,z) \Phi(r,z;t,y) dz dr.
	\end{align*}
	Hence, recalling Lemma \ref{lem : estime L-Lfy} and Proposition \ref{prop:existence_continuite_Phi}, it follows by \eqref{eq: def v bar Phi} that
	\begin{align*}
		v_{ \Phi}(s,x)=&\int_{\R^{2}}({\rm L} -{\rm L}_{T,   y})  \fr_{ {T,}y}(s,\xr;T,  y)g(y)d  y
		\\
		&+\int_{s}^{T} \!\! \int_{\R^{2}}({\rm L} -{\rm L}_{r,z})  \fr_{ {r,}z}(s,\xr;r,z)  \big( v_{ \Phi}(r,z) {+ \ell(r,z) } \big) dz dr.
	\end{align*}
	We then use \eqref{eq: LcircE bar fE =0}  to obtain
	\begin{align*}
		v_{\Phi}(s,x)
		~=&
		\int_{\R^{2}}{\rm L}   \fr_{T, y}(s,\xr;T,  y)g(  y)d  y
		+
		\int_{s}^{T} \!\! \int_{\R^{2}} {\rm L}  \fr_{r, z}(s,\xr;r,z)  \big( v_{\Phi} (r,z) {+ \ell(r,z) } \big) dz dr.
	\end{align*}
	It follows then by \eqref{eq: Lxr tilde vr} that $\vr$ is a classical solution to the PPDE \eqref{eq:ppde}. 

	\vspace{0.5em}
	
	\noindent $\mathrm{(iii)}$ We now prove   that $\lim_{s \nearrow T} \vr(s,\xr) = g(\xr_T, I_T(\xr))$, or equivalently $\lim_{s \nearrow T} v(s,x) = g(x) $, {whenever $g$ is continuous}.
In view of the estimates in \eqref{eq: majo fy} and \eqref{eq: estime vraie densite}, and    Proposition \ref{prop:existence_fb_regul},
	one has
	\begin{align*} 
		\lim_{s \nearrow T} v(s,x)
		~=~&
		\lim_{s \nearrow T} \int_{\R^2} f(s,x; T,y) g(y) dy
		~=~
		\lim_{s \nearrow T} \int_{\R^2} f_{T,y} (s,x; T,y) g(y) dy \\
		~=~&
		\lim_{M \to \infty} \lim_{s \nearrow T} \int_{D^M_{s,T}} \!\! f_{T,y} (s,x; T,y) g(y) dy 
		~=
		\lim_{M \to \infty} \lim_{s \nearrow T} \int_{D^M_{s,T}} \!\! f_{T,x} (s,x; T,y) g(y) dy \\
		~=~&
		\lim_{s \nearrow T} \int_{\R^2} f_{T,x} (s,x; T,y) g(y) dy
		~=~
		g(x),
	\end{align*}
	in which 
	$$
		D^M_{s,T} 
		:=
		\Big[ x_1 -M \sqrt{T-s}, ~x_1+ M \sqrt{T-s} \Big] \x \Big[ x_2 - M \sqrt{(T-s) \tilde m_{s,t}}, ~x_2 + M \sqrt{(T-s) \tilde m_{s,t}} \Big],
	$$
	so that third and fifth equalities are true since both $f_{T,y}(s,x; T,y)$ and $f_{T,x}(s,x; T,y)$ are dominated by $C f^{\circ}(s,x; T,y)$ in which the covariance matrix in $f^{\circ}$ is given by $\Sigma_{s,T}(4\bar \ar)$,
	and the fourth equality follows by the fact that, for every fixed $M > 0$, 
	$$
		\lim_{s \nearrow T} \sup_{y \in D^M_{s,T}} \left| \frac{f_{T,y}(s,x; T,y)}{ f_{T,x}(s,x; T,y)} - 1 \right| = 0.
	$$	

{\noindent $\mathrm{(iv)}$ The fact that $\vr$ is the unique solution of  \eqref{eq:ppde} satisfying \eqref{eq: borne croissance vr et cond bord} holds true follows easily by a verification argument based on It\^{o}-Dupire's formula, see \cite{cont2013functional}, whenever $g$ is continuous. }
	\end{proof}

\subsection{Proofs of Theorems \ref{thm:f_well_defined}, \ref{thm:c12} and \ref{thm:vr_uniqueX}}

\begin{proof}[\sl Proof of Theorem \ref{thm:f_well_defined}]
	$\mathrm{(i)}$ First, the well-posedness of $\Phit$ in \eqref{eq:def_Phit}-\eqref{eq: def Delta L k} is proved in Proposition \ref{prop:existence_continuite_Phi}.
	Further, the well-posedness of $f$ in \eqref{eq:def_f_transition_proba} as well as its continuity and growth property is proved in Proposition \ref{prop:existence_fb_regul}.

	\vspace{0.5em}
	
	\noindent $\mathrm{(ii)}$ Under further conditions, the existence of $\partial_{x_1} f(s,x; t,y)$ as well as its continuity and growth property is proved in Proposition \ref{prop : fb C1}.
\end{proof}

\begin{proof}[\sl Proof of Theorem \ref{thm:c12}]
	$\mathrm{(i)}$ The fact that $\fr(\cdot; t, y) \in \Cb^{1,2}([0,t))$ is proved in Proposition \ref{prop:fr_C12}.
	
	\vspace{0.5em}
	
	\noindent $\mathrm{(ii)}$ The fact that $\vr$ provides a classical solution to the PPDE, as well as the estimation on the derivatives are proved in Proposition \ref{prop:vr_C12_edp}.

	\vspace{0.5em}
	
	\noindent $\mathrm{(iii)}$ 
	We now use the PPDE results in Item $\mathrm{(ii)}$ to study the path-dependent SDE \eqref{eq:SDE_XI}.
	To study the weak solution of the SDE \eqref{eq:SDE_XI}, we consider the martingale problem on the canonical space $C([0,T])$ of all $\R^2$-valued continuous paths on $[0,T]$.
	By abuse of notation, we denote by $(X_t, I_t)_{t \in [0,T]}$ the canonical process, which generates the canonical filtration $\F$.
	Then, given an initial condition $(t, x) \in [0, T] \x \R^2$, a solution to the corresponding martingale problem is a probability measure $\P$ on $C([0,T])$ such that
	$\P[(X_s, I_s) = x = (x_1, x_2), ~s \in [0,t]] = 1$, $\P[I_s = x_2 + \int_t^s X_r dA_r, ~s \in [t, T]] = 1$ and the process
	$$
		\varphi(X_s) - \int_t^s \Big( \mub_r(X) D \varphi(X_r) + \frac12 \sigmab_r (X) D^2 \varphi(X_r) \Big) dr,
		~s \in [t,T],
	$$
	is a $(\P, \F)$-martingale for all bounded smooth functions $\varphi: \R \longrightarrow \R$.
	Let us denote, for all $(t,x) \in [0,T] \x \R^2$,
	$$
		\Pc(t,x) 
		~:=~
		\big\{ \P ~: \P ~\mbox{is solution to the martingale problem with initial condition}~(t,x) \big\}.
	$$
	Notice that $\mub$ and $\sigmab$ are both bounded continuous, 
	it is then classical to know that $\Pc(t,x)$ is a nonempty compact set (see e.g. Stroock and Varadhan \cite[Chapter VI]{stroock1997multidimensional}).
	
	\vspace{0.5em}
	
	We next apply the classical Markovian selection technique (see e.g. \cite[Chapter 12.2]{stroock1997multidimensional}) to construct a weak solution to the SDE \eqref{eq:SDE_XI} such that $(X_t, I_t)_{t \in [0,T]}$ is a strong Markov process.
	Let $(\phi_n)_{n \ge 1}$ be a sequence of bounded continuous functions from $[0,T] \x \R^2 \longrightarrow \R$ such that  it is a measure determining sequence in the sense that
	the sequence
	$$
		\Big\{ \E^{\P} \Big[ \int_0^T \phi_n(t, X_t, I_t) dt \Big] \Big\}_{n \ge 1}
	$$
	can determinate the probability measure $\P$ on $C([0,T])$.
	For each $(t,x) \in [0,T] \x \R^2$, let $\Pc^+_0(t,x) := \Pc(t,x)$, and then define, for each $n \ge 0$,
	$$
		\Pc^+_{n+1}(t,x) 
		=
		\Big\{ 
			\P \in \Pc^+_n (t,x) ~: 
			\E^{\P} \Big[ \int_0^T \phi_n(t, X_t, I_t) dt \Big]  
			=
			\max_{\P' \in \Pc^+_n(t,x)} \E^{\P'} \Big[ \int_0^T \phi_n(t, X_t, I_t) dt \Big]
		\Big\}.
	$$
	It is easy to see that each $\Pc^+_n(t,x)$ is a non-empty compact set, so that $\Pc^+(t,x) := \cap_{n \ge 1} \Pc^+_n(t,x)$ is also non-empty compact, {as the sequence is non-increasing}.
	Moreover, since any two probability measures in $\Pc^+(t,x)$ has the same value by evaluating w.r.t. any $\phi_n$,
	this implies that $\Pc^+(t,x)$ contains exactly one probability measure denoted by $\P^+_{t,x}$.
	By the dynamic programming principle for the optimal control problem in the definition of $\Pc^+_{n+1}$,
	it follows that $(X, I, (\P^+_{t,x})_{(t,x) \in [0,T] \x \R^2})$ provides a Markov process solution to  SDE \eqref{eq:SDE_XI} such that $( X, I)$ is a strong Markov process.
	
	\vspace{0.5em}
	
	At the same time, one can apply the above Markovian selection argument to construct another Markov process $(X, I, (\P^-_{t,x})_{(t,x) \in [0,T] \x \R^2}$ by replacing  ``$\max$'' by ``$\min$'' in the definition of $\Pc^+_{n+1}(t,x)$.
	If the class of all martingale solutions $\Pc(t,x)$ is not unique, then $\P^+_{t,x} \neq \P^-_{t,x}$ as $(\phi_n)_{n \ge 1}$ is measure determining.
	
	\vspace{0.5em}
	
	At the same time, by the results in Item $\mathrm{(ii)}$ and the Feynman-Kac's formula in the case $g\equiv 0$, one has
	$$
		\E^{\P^+_{s,x}} \Big[ \int_s^T \ell_t(X_t, I_t) dt \Big] 
		=
		\E^{\P^-_{s,x}} \Big[ \int_s^T \ell_t(X_t, I_t) dt \Big] 
		=
		\int_s^T \int_{\R^2} f(s,x; t,y) \ell_t(y) dy dt.
	$$
	Since $\ell$ could be an arbitrary bounded continuous function, this implies that $\P^+_{s,x} = \P^-_{s,x}$ for all $(s,x) \in [0,T] \x \R^2$.
	Therefore, for all initial condition $(t,x)$, there exists a unique solution to the martingale problem, i.e. a unique weak solution.
	Moreover the (unique) solution process $(X, I)$ is a strong Markov process,
	and the transition probability function is given by $f$.
\end{proof}

 \begin{proof}[\sl Proof of Theorem  \ref{thm:vr_uniqueX}.]

	When the SDE \eqref{eq:SDE_XI} admits weak uniqueness, the above Markovian selection argument shows that the only solution $( X, I)$ is a strong Markov process.
	
	\vspace{0.5em}
 
	$\mathrm{(i)}$ Let $W^{\perp}$ be a Brownian motion independent of $W$, $(\eps_n)_{n \ge 1}$ be a sequence of positive constants such that $\eps_n \longrightarrow 0$.
	For each $n > 1$, let us define $\Xt^{n} = (\Xt^{n,1}, \Xt^{n,2})$ as the unique (Markovian) solution to the SDE
 	\begin{align*} 
		d \Xt^{n,1}_t &
		=
		\mut_t (\Xt^n_t) dt + \sigmat_t(\Xt^n_t) dW_t,
		~~~
		d \Xt^{n,2}_t 
		=
		\mut_t(\Xt^n_t) A_t dt +   \sigmat_t(\Xt^n_t) A_t d W_t + \eps_n dW^{\perp}_t.
	\end{align*}
	By stability of weak solutions of SDEs, it is clear that, by using the same initial condition for the above SDE as that in \eqref{eq:def_Xt} for $\Xt$, 
	one has $\Xt^n \longrightarrow \Xt$ weakly.
	
	\vspace{0.5em}
	
 	At the same time, it follows from e.g.~\cite{francesco2005class} that, for each $t\in (s,T]$, $\Xt^{n}_{t}$ has a density $\ft^{n}(s,x;t,\cdot)$ whenever $\Xt^{n}_{s}=x$.
	Moreover, $\ft^n$ can be defined in the form 
 	\begin{equation*}  
		\ft^{n}(s,x;t,y)= \ft^{n}_{{t},y}(s,x;t,y)+ \int_{s}^{t}\int_{\R^{2}} \ft^{n}_{{r},z}(s,x;r,z)\Phit^{n}(r,z;t,y)dzdr
	\end{equation*}
	in which 
	$\ft^{n}_{{t},y}$ is defined as $\ft_{{t},y}$ but with 
	 $$
		\Sigmab^{n}_{s,t} (r, z)
		=
		\sigma^2_{r} (z) 
		\left( \begin{array}{cc} 
		t- s &  - \int_s^t (A_u - A_s) du  \\
		- \int_s^t (A_u - A_s) du & \int_s^t \big[(A_u - A_s)^2+ \eps^{2}_{{n}} \sigma_r (z)^{-2} \big] du
		\end{array} \right)
	$$
	in place of $\Sigmab_{s,t} (r, z)$, and 
	\begin{equation*} 
		\Phit^{n}(s,x; t, y) 
		~:=~
		\sum_{k=0}^{\infty} \Deltat^{n}_k(s,x; t,y),
	\end{equation*}
	where $\Deltat^{n} _0(s,x; t,y) := \big( \Lct^n_s - \Lct^{n,t,y}_s \big) \ft^{n}_{t,y}(s,x; t,y) $,
	\begin{align*}
		\Deltat^{n}_{k+1}  (s,x;t,y) := \int_s^t   \int_{\R^2} \Deltat^{n} _0(s,x; r,z) \Deltat^{n}_{k}(r,z;t,y) dz dr, 
		~~k\ge 0.
	\end{align*}
	In the above, $\Lct^n$ is the generator of $\Xt^{n}$ and  $\Lct^{n,t,y}$ is defined from  $\Lct^n$  as $\Lct^{t,y}$ is defined from $\Lct$ by freezing $\sigma$ to $\sigma_{t}(y)$ and erasing the drift term. Then, we define $f^{n}_{r,z}$ from  $\ft^n_{\cdot}$ as  $f_{r,z}$ is defined from $ \ft_{\cdot}$ in Section \ref{sec:main_results}.	
	\vspace{0.5em}
	
	It is straightforward to check that the estimates in \eqref{eq: estime Delta k} hold for $(\Deltat^{n}_{k})_{k\ge 0}$ in place of $(\Deltat_{k})_{k\ge 0}$, uniformly in $n>0$. 
	Then, an induction argument combined with the fact that $\ft^{n}_{t, y}(s,x; t,y)\to \ft_{t, y}(s,x; t,y)$ as $n\to \infty$, for all $(s,x,t,y)\in \Theta$, 
	implies that $f^{n}(s, x;t, y):= \ft^n (s,\AMp_{s} x;t,\AMp_{t}  y)$ converges to $ f(s, x;t, y)$ as $n \longrightarrow \infty$, for all $(s, x,t, y)\in \Theta$. 
	By the weak convergence of {the sequence of processes $(\Xt^n)_{n \ge 1}$ to $X$}, this shows that $f$ is the transition probability function of {$( X, I)$}.

	\vspace{0.5em} 
 
	\noindent $\mathrm{(ii)}$ As shown in Theorem \ref{thm:f_well_defined}, one has $\vr\in \Cb^{0,1}([0,T))$ and the vertical derivative $\partial_{\xr}\vr$ is locally bounded.
	Let $(X,I)$ be the solution of SDE \eqref{eq:SDE_XI}, then by Feynman-Kac's formula, the process
	$$
		\vr(t,X) + \int_0^t \bar \ell(s, X) ds ,~t \in [0,T],
		~\mbox{is a local martingale}.
	$$
	One can further apply the $C^1$-It\^o formula for path-dependent functionals in \cite{bouchard2021c} to prove \eqref{eq: Ito vr}.
	Indeed, when \eqref{eq: cond vr Lip} holds true, one can directly apply  \cite[Proposition 2.11 and Theorem 2.5]{bouchard2021c}.

	\vspace{0.5em}

	Otherwise, when $A$ is monotone and $0<\frac{1+\beta_{2}-\beta_{0}}{2+4\beta_{4}} < 1-\frac{\beta_3-\beta_{2}+\beta_{0}}{2}$,
	we can fix $\alpha'\in (\frac{1+\beta_{2}-\beta_{0}}{2+4\beta_{4}},1-\frac{\beta_3-\beta_{2}+\beta_{0}}{2})$, and, 
	by Proposition \ref{prop : holder f en x2},
	there exists a constant $C> 0$ such that, for all $\eps > 0$,
	\begin{align*}
		&\E \Big[ \Big| \vr\big(s+\eps, X \big)-\vr\big(s+\eps,  X_{s \wedge \cdot} \oplus_{s+\eps} (X_{s+\eps} - X_s) \big) \Big|^{2} \Big]
		\\
		~\le~&
		C \E\Big[ \Big(\sup_{ s \le t \le s+\eps} \big| X_t - X_s \big| ~\eps^{\beta_{4}} \Big)^{\frac{4\alpha'}{1+\beta_{2}'}} \Big]
		~\le~
		C\eps^{\frac{\alpha'(2 +4\beta_{4})}{1+\beta_{2}'}},
	\end{align*}
	where $ \big( X_{s \wedge \cdot} \oplus_{s+\eps} (X_{s+\eps} - X_s) \big)_t  :=  \1_{[0, s+\eps)}(t) X_{s\wedge t} + \1_{[s+\eps,T]} (t) X_{s+\eps}$ for all $t \in [0,T]$.
	Since $\frac{\alpha'(2+4\beta_{4})}{1+\beta_{2}'}>1$, it follows that 
	$$
		\lim_{\eps\searrow 0} 
		\frac1\eps
		\E\Big[ \Big|\vr\big(s+\eps, X\big)-\vr\big( X_{s \wedge \cdot} \oplus_{s+\eps} (X_{s+\eps} - X_s) \big) 
		\Big|^{2} 
		\Big]
		~=~
		0.
	$$
	Finally, we can apply \cite[Proposition 2.6 and Theorem 2.5]{bouchard2021c} to deduce \eqref{eq: Ito vr}. 
  \end{proof}


\def\cprime{$'$} \def\cprime{$'$}

\end{document}